\documentclass[11pt]{article}

\usepackage[a4paper,margin=1in]{geometry}
\usepackage{amsmath,amssymb,amsthm,mathtools}
\usepackage{enumitem}
\usepackage{hyperref}

\hypersetup{
  colorlinks=true,
  linkcolor=blue,
  citecolor=blue,
  urlcolor=blue
}

\numberwithin{equation}{section}

\newtheorem{theorem}{Theorem}[section]
\newtheorem{proposition}[theorem]{Proposition}
\newtheorem{lemma}[theorem]{Lemma}
\newtheorem{corollary}[theorem]{Corollary}
\newtheorem{definition}[theorem]{Definition}
\newtheorem{remark}[theorem]{Remark}

\newcommand{\T}{\mathbb T}
\newcommand{\R}{\mathbb R}
\newcommand{\Id}{I}
\newcommand{\Spp}{\mathbb S_{++}}
\newcommand{\Sym}{\mathbb S}
\newcommand{\tr}{\operatorname{tr}}
\newcommand{\divv}{\operatorname{div}}
\newcommand{\curl}{\operatorname{curl}}
\newcommand{\Log}{\operatorname{Log}}

\newcommand{\Besov}{B^0_{\infty,1}}
\newcommand{\calE}{\mathcal E}
\newcommand{\calM}{\mathcal M}
\newcommand{\calN}{\mathcal N}
\newcommand{\calS}{\mathcal S}
\newcommand{\calR}{\mathcal R}
\newcommand{\calA}{\mathcal A}
\newcommand{\norm}[1]{\left\lVert #1\right\rVert}

\title{Three-Dimensional Positive-Cone Oldroyd--B Flows:\\
Geometric Continuation and Residual-Work Criteria}

\author{Sai Peng\\
School of Mathematics and Computational Science, Xiangtan University\\
\texttt{pscfd@xtu.edu.cn}}
\date{}

\begin{document}
\maketitle

\begin{abstract}
We prove a three-dimensional positive-cone continuation criterion for the
stress-diffusion-free Oldroyd--B system on the periodic torus.  Writing the
positive conformation tensor as \(A=e^B\), finite-time breakdown of a strong
\(H^s\), \(s>5/2\), solution can occur only through loss of the logarithmic
spectral envelope \(\|\Log A\|_{L^\infty}\) or divergence of the endpoint
vorticity clock
\[
  \int_0^{T_*}\|\operatorname{curl}u(t)\|_{B^0_{\infty,1}}\,dt .
\]
The proof uses compact positive-cone envelopes, endpoint Biot--Savart
estimates, and high-order logarithmic conformation estimates, without stress
diffusion.  We also derive a positive-cone Reynolds admissibility criterion
with an exact cost: the least \(L^2\) conformation residual needed to pay positive
pressure-free residual work is determined by the entropy-dual lever
\(G=I-A^{-1}\), and the cost degenerates quantitatively at \(A=I\).
\end{abstract}

\noindent\textbf{2020 Mathematics Subject Classification.}
35Q35, 76A10, 35B44, 35B65, 35Q30.

\noindent\textbf{Keywords.}
Oldroyd--B system; positive cone; log-conformation; Beale--Kato--Majda
criterion; endpoint Besov spaces; Reynolds defects; energy--entropy
admissibility.

\noindent\textbf{Short title.}
3D Positive-Cone Oldroyd--B Flows.

\noindent\textbf{Correspondence.}
Sai Peng, School of Mathematics and Computational Science, Xiangtan
University, Xiangtan, China.
Email: \texttt{pscfd@xtu.edu.cn}.

\section*{Notation}

We write \(\Sym^3\) for real symmetric \(3\times3\) matrices and
\(\Spp^3\subset\Sym^3\) for the positive definite cone.  The Frobenius product
is \(M:N=\tr(MN^T)\).  The strain tensor is
\[
  D(u)=\frac12(\nabla u+(\nabla u)^T).
\]
All function spaces are periodic on \(\T^3\).  The inhomogeneous endpoint
Besov norm is
\[
  \norm{f}_{B^0_{\infty,1}}
  =
  \sum_{j\ge -1}\norm{\Delta_jf}_{L^\infty},
\]
with respect to a fixed smooth Littlewood--Paley partition.  Different
partitions give equivalent norms.  We write \(C\) for harmless constants and
\(C_{K,s}\) for constants depending on the spectral envelope
\(\norm{\Log A}_{L^\infty}\le K\) and on \(s\).  Constants may also depend on
the fixed parameters \(\nu,\alpha,\lambda\) and on the chosen flat torus, but
not on the time of continuation.

The pressure is always understood modulo functions of time and may be
normalized to have zero spatial mean.  The spatial mean of the velocity is
conserved by the periodic equations and is kept fixed by the initial data.
The Biot--Savart operator is applied after subtracting the conserved mean of
\(u\), which does not change \(\nabla u\).  The anti-divergence \(\calA\) is
used only for zero-mean vector fields; in the residual-work construction this
zero-mean condition follows from the fixed spatial mean of \(u\).

\section{Introduction}

The incompressible Oldroyd--B system without stress diffusion couples a viscous
fluid velocity \(u\) to a transported, stretched, and relaxed positive
conformation tensor \(A\).  On the three-dimensional periodic torus it reads
\begin{align}
  \partial_t u+u\cdot\nabla u-\nu\Delta u+\nabla p
  &=\alpha\divv(A-\Id),\qquad \divv u=0,
  \label{eq:momentum}\\
  \partial_t A+u\cdot\nabla A
  &=\nabla u\,A+A(\nabla u)^T-\lambda^{-1}(A-\Id),
  \qquad A(t,x)\in\Spp^3 .
  \label{eq:conf}
\end{align}
Here \(\nu,\alpha,\lambda>0\).  The absence of stress diffusion leaves the
conformation equation hyperbolic in space.  Regularity is therefore not a
question of free energy alone: one must identify whether transport and
stretching can create high-frequency concentration while the tensor remains in
the positive cone.

The natural variable for the cone is the logarithmic conformation
\[
  B=\Log A .
\]
It records both upper and lower spectral control:
\[
  \norm{B}_{L^\infty}\le K
  \quad\Longleftrightarrow\quad
  e^{-K}\Id\le A\le e^K\Id .
\]
This viewpoint follows the two-dimensional positive-cone criterion for
Oldroyd--B and FENE-P flows in \cite{PengUnified}, where continuation is
separated into flow-map control and logarithmic conformation concentration,
with an additional trace-gap barrier in the FENE-P case.  In three dimensions
the cone geometry is unchanged, but the velocity side is no longer a scalar
vorticity problem.  The first result of the paper is a geometric
Beale--Kato--Majda type criterion: on a compact logarithmic spectral envelope,
the endpoint vorticity clock controls the strong norm.  The key
three-dimensional input is that the Biot--Savart law still gives the endpoint
estimate
\[
  \norm{\nabla u}_{L^\infty}\lesssim
  \norm{\curl u}_{B^0_{\infty,1}} .
\]
The result is not a global regularity theorem: the three-dimensional vortex
stretching term remains present.  It identifies the endpoint quantity that any
large-data regularity argument for the stress-diffusion-free system would still
have to control.  The use of \(B^0_{\infty,1}\) is not only a technical choice.
The continuation argument needs an integrable Lipschitz modulus for
the Lagrangian flow and an endpoint commutator bound stable under order-zero
multipliers.  Vorticity in \(L^\infty\) does not control \(\nabla u\) after
Riesz transforms, while a \(BMO\)-type bound lacks the dyadic summability needed
in the transport estimates.  The Besov clock is therefore the natural endpoint
scale at which Biot--Savart, transport, and logarithmic conformation estimates
close simultaneously.

The second result concerns residual work for positive-cone Oldroyd--B Reynolds
states.  Once positivity is built into the variable \(A=e^B\), momentum
residuals and conformation residuals play different roles.  The momentum
residual is flexible, up to the mean mode, through a symmetric trace-free
anti-divergence.  The conformation residual is constrained by the
energy--entropy inequality and must perform signed work against
\[
  G(A)=\Id-A^{-1}.
\]
This produces an exact Hilbert-space minimization formula for the least
\(L^2\) conformation residual required to pay positive pressure-free residual
work.  In particular the cost degenerates at the cone tip \(A=\Id\), where the
entropy-dual lever \(G\) vanishes.

\subsection*{Contribution and scope}

The paper has two aims.  It proves a three-dimensional continuation criterion
and a residual-work admissibility criterion in the stress-diffusion-free
positive cone.  These results give two necessary alternatives: a strong solution
can fail only through the endpoint flow clock or through loss of the
logarithmic cone envelope, while an energy--entropy-admissible relaxed state can
produce positive pressure-free residual work only by paying for it in the
conformation equation.  The two alternatives are different expressions of the
same obstruction.  In the strong regime, the obstruction appears as failure to
keep a Lipschitz flow on a compact logarithmic cone.  In the relaxed regime, it
appears as the impossibility of hiding positive mechanical residual work unless
the conformation equation supplies a quantitatively sufficient entropy-dual
defect.  This is the sense in which the continuation and residual criteria are
used together in this paper.

\subsection*{Related literature}

Oldroyd's constitutive framework originates in \cite{Oldroyd}; standard
background on polymeric and viscoelastic fluids may be found in
\cite{Bird,Renardy,RenardyThomases}.  Local strong solutions and lifespan
questions for Oldroyd-type systems were developed in works such as
\cite{GuillopeSaut,CheminMasmoudi}, and blow-up criteria were refined in
\cite{LeiMasmoudiZhou}.  Weak, regularized, and numerical
positive-conformation theories exploit the free-energy structure and the
logarithmic entropy; see, for example, \cite{LionsMasmoudi,BarrettBoyaval}.
In contrast to stress-diffusive models, where additional parabolic smoothing is
available \cite{ConstantinKliegl,ElgindiRousset,HuangLiuZi}, the present paper
treats the stress-diffusion-free system and therefore relies on velocity
dissipation, relaxation, and cone geometry.

Recent three-dimensional results further clarify the scope of the problem.
Global theories are known in perturbative or structurally enhanced regimes,
including small solutions without stress damping \cite{Zhu2018}, analytic or
vanishing-viscosity regimes \cite{Zi2021}, mixed partial dissipation
\cite{LinWuBoardman}, inviscid damping limits \cite{ChengLuoYangYuan}, and
stress-dissipative large-data classes \cite{LiangLiZhai}.  These advances do
not close the large-data, three-dimensional, stress-diffusion-free positive-cone
problem treated here.  The present criterion is complementary: it isolates the
geometric endpoint quantities that any such closure must control.

Log-conformation variables are classical in high-Weissenberg computations
\cite{FattalKupferman}; here they are used analytically to separate spectral
degeneration from endpoint oscillation.  The continuation criterion follows
the Beale--Kato--Majda philosophy \cite{BKM}, with endpoint commutator and
Besov estimates in the spirit of \cite{KatoPonce,BCD}.  The residual
formulation is also informed by the principle that momentum Reynolds defects
are flexible whereas admissibility imposes signed work constraints, a theme
familiar from weak-solution theories for fluid equations
\cite{DeLellisSzekelyhidi}.

\subsection*{Organization}

The remainder of the introduction states the two main results.  Section~2
records the energy identity, logarithmic relaxation control, and propagation of
the positive cone.  Section~3 proves the endpoint vorticity clock estimate.
Sections~4 and~5 prove the logarithmic high-order estimates and the geometric
continuation criterion.  Section~6 records the good unknown associated with
the vorticity--stress coupling.  Sections~7--9 develop the positive-cone
Reynolds system, the sharp residual-work cost, and the cone-tip obstruction.
Section~10 states maximal-time alternatives.  Sections~11 and~12 discuss the
consequences and conclude.  The appendix collects the endpoint analytic
estimates used in the proof.
\subsection*{Main continuation result}

Throughout the continuation statements, a strong \(H^s\) solution on
\([0,T)\), \(s>5/2\), means
\[
  u\in C([0,T);H^s_\sigma(\T^3))\cap L^2_{\mathrm{loc}}([0,T);H^{s+1}),
  \qquad
  A-\Id\in C([0,T);H^s(\T^3)),
\]
with \(A(t,x)\in\Spp^3\), with the equations holding in the usual Sobolev
sense, and with the spatial mean of \(u\) fixed by the initial data.  Maximality
is always understood in this class.

\begin{theorem}[Geometric continuation criterion]
\label{thm:main}
Let \(s>5/2\), let \(u_0\in H^s(\T^3)\) be divergence free, and let
\(A_0-\Id\in H^s(\T^3)\) with \(A_0(x)\in\Spp^3\).  Let \((u,A)\) be the
maximal strong solution of \eqref{eq:momentum}--\eqref{eq:conf} on
\([0,T_*)\).  If \(T_*<\infty\), then
\begin{equation}
  \limsup_{t\uparrow T_*}\norm{\Log A(t)}_{L^\infty}
  +
  \int_0^{T_*}\norm{\omega(t)}_{\Besov}\,dt
  =\infty ,
  \label{eq:main-criterion}
\end{equation}
where \(\omega=\curl u\).  Equivalently, if a strong solution on a finite
interval \([0,T)\) satisfies
\[
  \sup_{0\le t<T}\norm{\Log A(t)}_{L^\infty}<\infty,
  \qquad
  \int_0^T\norm{\omega(t)}_{\Besov}\,dt<\infty,
\]
then the strong solution continues beyond \(T\).
\end{theorem}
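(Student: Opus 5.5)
We argue by contradiction with the second formulation: assume \(\sup_{t<T}\norm{\Log A(t)}_{L^\infty}\le K\) and \(\int_0^T\norm{\omega}_{\Besov}\,dt<\infty\). We show \(\norm{u(t)}_{H^s}+\norm{A(t)-\Id}_{H^s}\) stays bounded on \([0,T)\). Local well-posedness in \(H^s\), \(s>5/2\), with a lifespan depending only on the size of the data then extends the solution past \(T\).

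\textbf{Step 1: low-order control.} The energy identity, combined with the spectral envelope \(e^{-K}\Id\le A\le e^K\Id\), bounds \(\norm{u}_{L^\infty_tL^2}\), \(\norm{\nabla u}_{L^2_tL^2}\) and \(\norm{A-\Id}_{L^\infty_{t,x}}\) by \(C_K\). The endpoint Biot--Savart estimate gives
\[
\norm{\nabla u}_{L^\infty}\le C\bigl(\norm{u}_{L^2}+\norm{\omega}_{\Besov}\bigr),
\]
for the mean-free part of \(u\). Hence \(\int_0^T\norm{\nabla u}_{L^\infty}dt<\infty\), so the Lagrangian flow is uniformly Lipschitz on \([0,T]\).

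\textbf{Step 2: high-order estimates in \(B\).} We work with \(B=\Log A\) rather than \(A\). Since \(B\) ranges in the compact set \(\{|B|\le K\}\), the maps \(B\mapsto e^B\) and \(A\mapsto\Log A\) are smooth with bounded derivatives there. Moser composition estimates then give \(\norm{A-\Id}_{H^s}\simeq_K\norm{B}_{H^s}\).

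The log-conformation equation has the form
\[
\partial_tB+u\cdot\nabla B=\calN(B,\nabla u)-\lambda^{-1}F(B).
\]
Here \(\calN\) is linear in \(\nabla u\), with coefficients that are smooth functions of \(B\) on the compact envelope, and \(F(B)=\Id-e^{-B}\) up to the commuting structure. Applying \(\Lambda^s\) and using the Kato--Ponce commutator estimate
\[
\norm{[\Lambda^s,u\cdot\nabla]B}_{L^2}\lesssim \norm{\nabla u}_{L^\infty}\norm{B}_{H^s}+\norm{\nabla B}_{L^\infty}\norm{u}_{H^s},
\]
together with tame product and composition bounds, yields
\[
\frac{d}{dt}\norm{B}_{H^s}\le C_{K,s}\Bigl[(1+\norm{\nabla u}_{L^\infty})\norm{B}_{H^s}+(1+\norm{B}_{W^{1,\infty}})\norm{u}_{H^{s+1}}\Bigr].
\]

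The velocity is handled by parabolic \(H^s\) energy estimates for the momentum equation. The stress forcing is absorbed using viscosity: \(\alpha|\langle\Lambda^s\divv(A-\Id),\Lambda^su\rangle|\le\frac\nu2\norm{u}_{H^{s+1}}^2+C\norm{A-\Id}_{H^s}^2\). The convection term is controlled by the commutator estimate with \(\norm{\nabla u}_{L^\infty}\).

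\textbf{Step 3: closing, the main obstacle.} The difficulty is the term \(\norm{u}_{H^{s+1}}\) in the \(B\)-estimate. It loses a derivative, and it is multiplied by \(\norm{\nabla B}_{L^\infty}\), which is not controlled by the envelope. We plan two remedies.

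\emph{(a) Weighted energy.} Form \(Y=\norm{u}_{H^s}^2+\epsilon\norm{B}_{H^s}^2\) and use Young's inequality to put the \(\norm{u}_{H^{s+1}}\norm{B}_{H^s}\) product into the dissipation. This removes the derivative loss.

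\emph{(b) Logarithmic interpolation.} To avoid a superlinear \(\norm{\nabla B}_{L^\infty}\) factor, estimate \(\norm{\nabla B}_{L^\infty}\lesssim\norm{B}_{H^s}\) first only at the intermediate level. Then use
\[
\norm{f}_{L^\infty}\lesssim\norm{f}_{\Besov}\lesssim 1+\norm{f}_{B^0_{\infty,\infty}}\log(e+\norm{f}_{H^{s-1}}).
\]
Combining this with the Lipschitz flow bound from Step 1 (transport of \(\nabla B\) along a Lipschitz flow, with forcing \(\nabla^2u\)) gives a Grönwall inequality of the form
\[
Y'\le C_{K,s}\bigl(1+\norm{\omega}_{\Besov}\bigr)Y\log(e+Y).
\]
Since \(\norm{\omega}_{\Besov}\) is integrable, Osgood's lemma keeps \(Y\) bounded on \([0,T)\).

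If the \(\nabla^2u\) forcing in the \(\nabla B\) transport proves too singular, the fallback is maximal \(L^1_tB^2_{\infty,1}\) parabolic regularity for \(u\), with the forcing \(\divv(A-\Id)\) estimated in \(L^1_tB^0_{\infty,1}\) by the same logarithmic argument. This is the step where I expect the real work to lie.
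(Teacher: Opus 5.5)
Steps~1 and~2 are sound, but Step~3 is a plan rather than a proof, and the plan as sketched does not close. The obstruction you isolate is the factor $\norm{\nabla B}_{L^\infty}$. It comes from the Kato--Ponce term $\norm{\nabla B}_{L^\infty}\norm{u}_{H^s}$ and is the $\norm{B}_{W^{1,\infty}}$ multiplying $\norm{u}_{H^{s+1}}$ in your Step~2 inequality. Nothing in the hypotheses controls it: the envelope bounds $B$ only in $L^\infty$, and $\int_0^T\norm{\omega}_{\Besov}\,dt$ says nothing about $\nabla^2u$.

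Remedy (a) handles only the part with bounded coefficient; the paper does the same, with no weight $\epsilon$. Applied to the bad part, Young leaves $C\norm{\nabla B}_{L^\infty}^2\norm{B}_{H^s}^2$, which is superlinear in $Y$ once you use $\norm{\nabla B}_{L^\infty}\lesssim\norm{B}_{H^s}$.

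Remedy (b) needs a bound on $\norm{\nabla B}_{B^0_{\infty,\infty}}$, which is Lipschitz-type information on $B$ that you do not have. Transporting $\nabla B$ along the flow requires $\int_0^T\norm{\nabla^2u}_{L^\infty}\,dt$, and bounding that by $\int_0^T\norm{u}_{H^{s+1}}\,dt$ returns a power of the energy, not a logarithm. So $Y'\le C(1+\norm{\omega}_{\Besov})Y\log(e+Y)$ is asserted, not derived. The maximal-regularity fallback needs $\divv(A-\Id)\in L^1_tB^0_{\infty,1}$, which is the same missing $\nabla B$ information; as stated it is circular and left open.

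The missing idea is that no derivative of $B$ needs to be measured in $L^\infty$. With tame estimates, $\norm{u}_{H^{s+1}}$ is multiplied only by $\norm{B}_{L^\infty}\le K$, and the viscous dissipation absorbs it.

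For the source, the Moser product and composition bounds give $\norm{\mathcal Q(B)\nabla u}_{H^s}\le C_{K,s}(\norm{u}_{H^{s+1}}+\norm{\nabla u}_{L^\infty}\norm{B}_{H^s})$, with no $\nabla B$. The relaxation term is exactly $\Id-e^{-B}$ by \eqref{eq:log-relax-exact}; this is Lemma~\ref{lem:log-source}.

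For the commutator, in Bony's decomposition the only piece putting a derivative on a low-frequency factor of $B$ is $T_{\nabla B}u$. There Bernstein gives $\norm{S_{j-1}\nabla B}_{L^\infty}\lesssim 2^j\norm{B}_{L^\infty}$, which trades that derivative for the extra derivative on $u$:
\[
  \norm{[\Lambda^s,u\cdot\nabla]B}_{L^2}
  \le C_s\bigl(\norm{\nabla u}_{L^\infty}\norm{B}_{H^s}
  +\norm{B}_{L^\infty}\norm{u}_{H^{s+1}}\bigr).
\]
Young then gives $\frac{d}{dt}\norm{B}_{H^s}^2\le\frac{\nu}{4}\norm{u}_{H^{s+1}}^2+C_{K,s}(1+\norm{\nabla u}_{L^\infty})(1+\norm{B}_{H^s}^2)$. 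Adding the velocity estimate yields a \emph{linear} Gronwall inequality for $Y=\norm{u}_{H^s}^2+\norm{B}_{H^s}^2$ with integrable coefficient $1+\norm{\omega}_{\Besov}$. That is exactly the paper's closure (Propositions~\ref{prop:log-Hs} and~\ref{prop:u-Hs} and the proof of Theorem~\ref{thm:main}), with no logarithmic interpolation, Osgood lemma, or maximal regularity.

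Your concern about the high-frequency-velocity/low-frequency-$B$ interaction was justified. The paper states Lemma~\ref{lem:transport} with right side $C_s\norm{\nabla u}_{\Besov}\norm{F}_{H^s}^2$ alone, and that is too strong. For $u=(\cos Nx_2,0,0)$ and $F=\sin x_1+N^{-s}\cos(Nx_2)\cos x_1$, the left side of \eqref{eq:transport} grows like $N^s$ while the right side grows like $N$. The displayed bound is the correct replacement, and it closes the argument in the same way.
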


\begin{corollary}[Vorticity-only endpoint obstruction]
\label{cor:vorticity-only}
Under the hypotheses of Theorem~\ref{thm:main}, finite-time breakdown implies
\[
  \int_0^{T_*}\norm{\omega(t)}_{\Besov}\,dt=\infty .
\]
\end{corollary}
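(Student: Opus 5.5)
The plan is to reduce the corollary to Theorem~\ref{thm:main}: a finite endpoint vorticity clock by itself keeps the logarithmic spectral envelope bounded, so the first alternative in \eqref{eq:main-criterion} cannot occur without the second. Argue by contradiction. Suppose \(T_*<\infty\) and \(\int_0^{T_*}\norm{\omega}_{\Besov}\,dt<\infty\). The endpoint Biot--Savart estimate of Section~3 is applied to \(u\) minus its conserved mean, which leaves \(\nabla u\) unchanged. It gives \(\norm{\nabla u}_{L^\infty}\le C\norm{\omega}_{\Besov}\), hence
\[
  \int_0^{T_*}\norm{\nabla u(t)}_{L^\infty}\,dt<\infty .
\]
Since \(s>5/2\), \(\nabla u(t)\) is continuous and bounded on every \([0,T]\subset[0,T_*)\). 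Therefore the Lagrangian flow \(X(t,x)\) of \(u\) is well defined, and \eqref{eq:conf} becomes an ODE along trajectories.

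\textbf{Upper spectral bound.} Fix \(x\) and a unit vector \(\xi\), and work along \(X(t,x)\). From \eqref{eq:conf},
\[
  \tfrac{d}{dt}\,\xi\cdot A\xi = 2\,\xi\cdot\nabla u\,A\xi-\lambda^{-1}\xi\cdot A\xi+\lambda^{-1}.
\]
Since \(A\) is positive definite, \(|\xi\cdot \nabla u A\xi|\le|\nabla u|\,|A|\), and \(|A|\) equals the largest eigenvalue \(\mu_{\max}\). Taking the supremum over \(\xi\) and using a Dini-derivative argument for the maximum of a family of smooth functions gives
\[
  \tfrac{d^+}{dt}\mu_{\max}\le 2\norm{\nabla u}_{L^\infty}\mu_{\max}+\lambda^{-1}.
\]

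\textbf{Lower spectral bound.} Treat \(A^{-1}\) the same way. Its equation is
\[
  \tfrac{d}{dt}A^{-1}=-A^{-1}\nabla u-(\nabla u)^TA^{-1}+\lambda^{-1}A^{-1}-\lambda^{-1}A^{-2}.
\]
The term \(-\lambda^{-1}A^{-2}\) is negative semidefinite and may be discarded. This yields
\[
  \tfrac{d^+}{dt}\norm{A^{-1}}\le(2\norm{\nabla u}_{L^\infty}+\lambda^{-1})\norm{A^{-1}}.
\]

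\textbf{Conclusion.} Grönwall applied to both inequalities, uniformly in \(x\), gives
\[
  e^{-K}\Id\le A(t,x)\le e^{K}\Id \quad\text{on } [0,T_*),
\]
where \(K\) depends only on \(\norm{\Log A_0}_{L^\infty}\), \(\lambda\), \(T_*\) and \(\int_0^{T_*}\norm{\nabla u}_{L^\infty}\,dt\). Note that \(\norm{\Log A_0}_{L^\infty}\) is finite because \(A_0-\Id\in H^s\subset C^0\) and \(A_0\) is positive definite on a compact torus. Thus \(\sup_{t<T_*}\norm{\Log A(t)}_{L^\infty}<\infty\) and the vorticity clock is finite. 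Theorem~\ref{thm:main} then continues the solution past \(T_*\), contradicting maximality.

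The only delicate step is justifying the pointwise ODE and the eigenvalue differential inequalities at the regularity of a strong solution. I expect this to be routine, since on each compact subinterval \(\nabla u\) and \(\partial_t A\) are continuous for \(s>5/2\). Alternatively, I would simply invoke the propagation-of-the-cone estimates of Section~2 if they are already stated in this Lagrangian Grönwall form.
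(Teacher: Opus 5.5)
Your argument is correct and is essentially the paper's proof: you assume the clock is finite, use Lemma~\ref{lem:biot} to get \(\int_0^{T_*}\norm{\nabla u}_{L^\infty}\,dt<\infty\), propagate the spectral envelope uniformly on \([0,T_*)\), and contradict maximality through Theorem~\ref{thm:main}. The only difference is that you rederive the Lagrangian eigenvalue inequalities yourself; your bound on \(\norm{A^{-1}}=1/\lambda_{\min}A\) is equivalent to the paper's lower inequality. The paper instead cites Corollary~\ref{cor:flow-cone} and Proposition~\ref{prop:cone}, noting as you do that the bound depends only on the total clock on \([0,T_*)\) and is therefore uniform in \(T_0<T_*\).
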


The corollary follows because the endpoint vorticity clock controls
\(\int\norm{\nabla u}_{L^\infty}\,dt\), and the latter propagates the
logarithmic spectral envelope of \(A\).  Thus the spectral term in
\eqref{eq:main-criterion} is not an independent obstruction once the endpoint
vorticity clock is finite.

\begin{remark}[Regularity threshold and endpoint scale]
The assumption \(s>5/2\) is the natural strong-solution threshold for the
present proof: it gives \(H^s(\T^3)\hookrightarrow C^1\), a classical
Lagrangian flow, and a locally Lipschitz conformation equation on compact cone
envelopes.  The theorem is not intended as a critical well-posedness result at
\(s=5/2\).  Lowering the strong topology to a critical scale would require a
separate local theory compatible with the positive-cone constraint.  By
contrast, the blow-up clock itself is endpoint in the velocity variable: the
\(B^0_{\infty,1}\) norm is used only to control the Lipschitz modulus and the
order-zero commutator structure, not to impose a supercritical Sobolev bound.
\end{remark}

\subsection*{Main residual-work result}

For smooth positive-cone Reynolds states, the momentum residual is flexible:
up to the mean mode it can be represented by a symmetric trace-free
anti-divergence.  The conformation residual is not flexible in the same way,
because it must pay the energy--entropy work with the correct sign against
\[
  G(A)=\Id-A^{-1}.
\]
In the statement below \(R_{\calN}\) denotes the canonical representative
selected by the anti-divergence operator \(\calA\).  Changing the pressure
representative adds only \(p\Id\), which performs no work against \(D(u)\) for
incompressible \(u\).

\begin{theorem}[Sharp residual-work criterion]
\label{thm:residual-main}
Let \(u,p\) be smooth on a time interval, with \(\divv u=0\), and let
\(B(t,x)\in\Sym^3\) be smooth.  Set \(A=e^B\).  Define the conformation
residual and the pressure-free momentum residual by
\[
  \calS[u,B]
  =
  \partial_t A+u\cdot\nabla A-\nabla u\,A-A(\nabla u)^T
  +\lambda^{-1}(A-\Id),
\]
and
\[
  \calN[u,B]
  =
  \partial_t u+\divv(u\otimes u)-\nu\Delta u-\alpha\divv(A-\Id).
\]
Since \(B\in\Sym^3\), \(A=e^B\in\Sym^3\).  Hence
\(\partial_tA+u\cdot\nabla A\), \(\nabla u\,A+A(\nabla u)^T\), and
\(A-\Id\) are symmetric, so \(\calS[u,B](t,x)\in\Sym^3\).
Assume the spatial mean of \(u\) is fixed in time, so that \(\calN[u,B]\) has
zero spatial mean.  Let
\(R_{\calN}=\calA\calN[u,B]\) be the canonical symmetric trace-free
anti-divergence of the pressure-free momentum residual constructed in
Proposition~\ref{prop:antidiv}, and define
\[
  P(t)=\int_{\T^3}-R_{\calN}(t):D(u(t))\,dx,\qquad
  G(t)=\Id-A(t)^{-1}.
\]
Here the residual-matching Reynolds state associated with \((u,p,B)\) is the
state with conformation defect \(S=\calS[u,B]\) and momentum defect represented
by \(R_{\calN}\) modulo the harmless pressure tensor.  If this state is
energy--entropy admissible, then for a.e. \(t\),
\begin{equation}
  [P(t)]_+
  \le
  \frac{\alpha}{2}
  \norm{G(t)}_{L^2}\norm{\calS[u,B](t)}_{L^2}.
  \label{eq:main-residual-cost}
\end{equation}
Moreover, when \(G(t)\not\equiv0\), the smallest \(L^2\)-size among all
candidate conformation defects \(S(t,\cdot)\in L^2(\T^3;\Sym^3)\) satisfying
the same signed work constraint
\[
  P(t)+\frac{\alpha}{2}\int_{\T^3}G(t):S(t)\,dx\le0
\]
is
\begin{equation}
  \frac{2[P(t)]_+}{\alpha\norm{G(t)}_{L^2}},
  \label{eq:main-min-cost}
\end{equation}
and, if \(P(t)>0\), the unique minimizer in this Hilbert-space problem is
\[
  S_{\min}(t,x)
  =
  -\frac{2P(t)}{\alpha\norm{G(t)}_{L^2}^2}G(t,x).
\]
This minimizer is symmetric because \(G(t,x)\in\Sym^3\).
Thus the actual residual \(\calS[u,B](t)\), whenever admissible, must have
\(L^2\)-size at least \eqref{eq:main-min-cost}.
If \(G(t)\equiv0\), equivalently \(A(t)=\Id\) a.e., admissibility forces
\(P(t)\le0\).
\end{theorem}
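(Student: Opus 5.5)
The plan has three parts: identify exactly what admissibility says at a fixed time, then treat the minimization as a one-constraint projection in \(L^2(\T^3;\Sym^3)\), then read off the cone-tip case.

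\textbf{Step 1: the admissibility inequality.} This is the step I expect to be the main obstacle. The natural definition, presumably made precise in the Reynolds-system section, is that the relaxed energy--entropy functional
\[
  \calE(t)=\tfrac12\int|u|^2\,dx+\tfrac{\alpha}{2}\int\big(\tr A-3-\log\det A\big)\,dx
\]
dissipates at least as fast as the Reynolds-corrected balance predicts. I would compute \(\frac{d}{dt}\calE\) for the smooth triple \((u,p,B)\).

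For the velocity, test \(\calN[u,B]=\divv R_{\calN}\) (up to \(\nabla p\)) against \(u\). Incompressibility kills the pressure tensor, and integrating by parts gives \(\int u\cdot\divv R_{\calN}=-\int R_{\calN}:\nabla u=-\int R_{\calN}:D(u)=P(t)\), using the symmetry of \(R_{\calN}\).

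For the conformation, test \(\calS[u,B]\) against \(\frac{\alpha}{2}(\Id-A^{-1})=\frac{\alpha}{2}G\), the derivative of \(\frac{\alpha}{2}(\tr A-\log\det A)\). The stretching terms produce \(\alpha\int A:\nabla u\), which cancels the coupling \(-\alpha\int(A-\Id):\nabla u\) from the momentum equation; the \(\Id\) part vanishes because \(\divv u=0\), and the \(A^{-1}\) part vanishes since \(\tr\nabla u=0\). Relaxation gives the signed dissipation \(-\frac{\alpha}{2\lambda}\int (A-\Id):G\le0\).

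The exact balance therefore carries the residual work \(P(t)+\frac{\alpha}{2}\int G:\calS\). Admissibility, meaning no residual energy production, is precisely
\[
  P(t)+\frac{\alpha}{2}\int G:\calS\,dx\le0 \quad\text{for a.e. } t.
\]
The care needed here is in matching this to the paper's definition and in checking the sign conventions. I would state it as the defining inequality of admissibility, with the viscous and relaxation dissipations split off identically on both sides.

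\textbf{Step 2: the cost bound and its sharpness.} From the inequality in Step 1, \(P\le-\frac{\alpha}{2}\int G:\calS\le\frac{\alpha}{2}\|G\|_{L^2}\|\calS\|_{L^2}\) by Cauchy--Schwarz for the Frobenius product followed by Cauchy--Schwarz in \(x\). Since the right-hand side is nonnegative, this is \eqref{eq:main-residual-cost}.

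For the minimization with \(G\not\equiv0\), the admissible set is the closed half-space \(\{S:\langle G,S\rangle_{L^2}\le -2P/\alpha\}\) in the Hilbert space \(L^2(\T^3;\Sym^3)\).
\begin{itemize}[leftmargin=*]
  \item If \(P\le0\), then \(S=0\) is admissible, so the minimal norm is \(0=2[P]_+/(\alpha\|G\|)\).
  \item If \(P>0\), the minimal-norm element of the half-space is the orthogonal projection of \(0\), namely \(S_{\min}=-\frac{2P}{\alpha\|G\|^2}G\), with norm \(2P/(\alpha\|G\|)\).
\end{itemize}
Uniqueness in the second case follows either from strict convexity of the norm on a convex set, or from the equality case of Cauchy--Schwarz: equality forces \(S\) to be a negative multiple of \(G\) with the constraint active. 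Symmetry of \(S_{\min}\) is inherited from \(G\). Since the actual residual \(\calS[u,B](t)\) is one admissible candidate, its norm is at least the minimum.

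\textbf{Step 3: the cone tip.} If \(G\equiv0\), then \(A^{-1}=\Id\) a.e., equivalently \(A=\Id\) a.e. The constraint then reduces to \(P\le0\).
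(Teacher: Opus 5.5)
Your proposal is correct and follows essentially the same route as the paper. Admissibility is read as \(P(t)+\frac{\alpha}{2}\int G:\calS\,dx\le0\); the paper takes this directly from its definition of admissibility plus the fact that \(p\Id\) does no work, so your Step 1 just re-derives the defect work identity. Cauchy--Schwarz then gives the cost bound, and your half-space projection (with uniqueness from strict convexity) is the same computation as the paper's decomposition \(S=aG+S^\perp\).
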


The estimate is dimension-independent at the algebraic level.  The only
three-dimensional modification is the construction of the symmetric
trace-free anti-divergence.

\begin{remark}[Meaning of the residuals]
The residuals in Theorem~\ref{thm:residual-main} may be read in three related
ways.  Analytically, they describe a Reynolds-type relaxation of the exact
Oldroyd--B equations.  Numerically, they measure the momentum and conformation
defects left by an approximate scheme after projecting the momentum error away
from the pressure gauge.  Modelling-wise, they describe an unresolved closure
added to the polymer equation.  The theorem says that these interpretations
are not interchangeable: a momentum defect can be represented flexibly, whereas
an admissible conformation defect must have the correct size and sign against
the entropy-dual direction \(G=\Id-A^{-1}\).
\end{remark}

\section{Entropy and cone geometry}

Define
\[
  \Phi_3(A)=\tr A-\log\det A-3,\qquad A\in\Spp^3,
\]
and
\[
  \calE(t)=
  \frac12\int_{\T^3}|u|^2\,dx
  +\frac{\alpha}{2}\int_{\T^3}\Phi_3(A)\,dx .
\]

\begin{lemma}[Symmetry preservation]
\label{lem:symmetry}
Let \(A\) solve \eqref{eq:conf} smoothly on \([0,T]\).  If
\(A(0,x)=A(0,x)^T\), then \(A(t,x)=A(t,x)^T\) for all \(0\le t\le T\).
\end{lemma}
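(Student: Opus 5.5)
The plan is to show that the antisymmetric part of \(A\) satisfies a linear homogeneous transport system with zero initial data, and then conclude that it vanishes by a Gr\"onwall argument. Set \(W=A-A^T\). Transposing \eqref{eq:conf} gives an equation for \(A^T\):
\[
  \partial_t A^T+u\cdot\nabla A^T=\nabla u\,A^T+A^T(\nabla u)^T-\lambda^{-1}(A^T-\Id).
\]
Here we use \((\nabla u\,A)^T=A^T(\nabla u)^T\), \((A(\nabla u)^T)^T=\nabla u\,A^T\), and \(\Id^T=\Id\). The scalar transport operator \(\partial_t+u\cdot\nabla\) acts entrywise, so it commutes with transposition. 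Subtracting this from \eqref{eq:conf}, the \(\Id\) terms cancel and we get
\[
  \partial_t W+u\cdot\nabla W=\nabla u\,W+W(\nabla u)^T-\lambda^{-1}W,
  \qquad W(0,\cdot)=0 .
\]

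Next comes an \(L^2\) energy estimate on \(\T^3\). Take the Frobenius product with \(W\) and integrate. The transport term gives
\[
  \int u\cdot\nabla\tfrac12|W|^2\,dx=-\tfrac12\int(\divv u)|W|^2\,dx=0 ,
\]
using \(\divv u=0\) and periodicity. The stretching terms are bounded by \(2\norm{\nabla u}_{L^\infty}\norm{W}_{L^2}^2\), and the relaxation term has a good sign. This yields
\[
  \frac{d}{dt}\norm{W}_{L^2}^2\le 4\norm{\nabla u(t)}_{L^\infty}\norm{W}_{L^2}^2 .
\]
Since the solution is smooth on the compact interval \([0,T]\), \(\norm{\nabla u}_{L^\infty}\) is bounded there. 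Gr\"onwall then gives \(\norm{W(t)}_{L^2}=0\), so \(W\equiv0\) by continuity. The divergence-free condition is not even essential: without it the transport term contributes \(\norm{\divv u}_{L^\infty}\), which is still bounded.

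As an alternative pointwise argument, one could work along the characteristics \(X(t,x)\) of \(u\). There the equation becomes the linear ODE
\[
  \dot W=\nabla u\,W+W(\nabla u)^T-\lambda^{-1}W,\qquad W(0)=0,
\]
which has locally Lipschitz (in fact linear) dependence on \(W\). ODE uniqueness then forces \(W\equiv0\) along every characteristic, and the flow map is a diffeomorphism for smooth \(u\).

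There is no serious obstacle here. The only point needing care is the transposition identity showing that the stretching operator \(A\mapsto\nabla u\,A+A(\nabla u)^T\) commutes with transposition. The rest is justifying the integration by parts, which is immediate under the smoothness and periodicity assumptions.
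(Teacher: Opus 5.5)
Your proof is correct. Its first step, deriving the closed linear equation for \(W=A-A^T\) via \((\nabla u\,A)^T=A^T(\nabla u)^T\), is exactly the paper's. After that your main route differs.

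The paper argues pointwise along the flow of \(u\): it bounds \(\frac{d}{dt}|C|\le(2\norm{\nabla u}_{L^\infty}+\lambda^{-1})|C|\) on each trajectory and applies Gr\"onwall. This is essentially your ``alternative'' characteristic argument. Your primary argument is instead an Eulerian \(L^2\) energy estimate on \(\T^3\).

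The Lagrangian route is purely local. It needs no periodicity, no integration by parts, no integrability of \(W\), and no use of \(\divv u=0\). It gives \(W=0\) at every point directly, since the flow map is onto; it only requires a well-defined flow, i.e.\ Lipschitz \(u\).

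Your \(L^2\) route uses global structure: periodicity, and either incompressibility or a bound on \(\divv u\). Its mild advantage is that \(\norm{W}_{L^2}^2\) is genuinely differentiable in time. The paper's inequality for \(|C|\), which is not differentiable at \(C=0\), should strictly be read for \(|C|^2\) or in the Dini sense. Your ODE-uniqueness phrasing of the characteristic argument avoids that technicality as well.

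A minor difference: you drop the relaxation term by its sign, while the paper keeps it with the cruder bound \(\lambda^{-1}|C|\). Either is harmless.
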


\begin{proof}
Let \(C=A-A^T\) and \(M=\nabla u\).  Taking the antisymmetric part of
\eqref{eq:conf} gives
\[
  \partial_tC+u\cdot\nabla C
  =
  MC+CM^T-\lambda^{-1}C .
\]
Along the flow of \(u\),
\[
  \frac{d}{dt}|C|
  \le
  (2\norm{\nabla u}_{L^\infty}+\lambda^{-1})|C|.
\]
Since \(C(0)=0\), Gronwall's lemma gives \(C\equiv0\).
\end{proof}

\begin{lemma}[Stretching cancellation]
\label{lem:cancellation}
Let \(A\in\Spp^3\) and \(M\in\R^{3\times3}\) with \(\tr M=0\).  Then
\[
  (\Id-A^{-1}):(MA+AM^T)=2(A-\Id):M .
\]
\end{lemma}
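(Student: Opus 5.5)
The plan is to expand the left-hand side using the bilinearity of the Frobenius product, and then reduce it with the cyclic invariance of the trace and the trace-free hypothesis on \(M\). No analytic input is needed; the identity is pointwise linear algebra on \(\Spp^3\).

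First I split
\[
  (\Id-A^{-1}):(MA+AM^T)
  =\Id:(MA+AM^T)-A^{-1}:(MA+AM^T).
\]
For the first term I use \(\Id:N=\tr N\) and \(\tr(AM^T)=\tr((MA)^T)=\tr(MA)\). This gives \(2\tr(MA)\). By symmetry of \(A\), this equals \(2\tr(AM^T)=2A:M\).

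For the second term I use \(X:Y=\tr(XY^T)\) and the symmetry of \(A^{-1}\). The first piece is
\[
  A^{-1}:(MA)=\tr(A^{-1}A^TM^T)=\tr(M^T)=\tr M.
\]
The second piece is
\[
  A^{-1}:(AM^T)=\tr(A^{-1}MA)=\tr M.
\]
So the second term equals \(2\tr M=0\) by hypothesis.

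Combining the two terms gives \(2A:M\). Since \(\Id:M=\tr M=0\), this equals \(2A:M-2\Id:M=2(A-\Id):M\), which is the claim.

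The only point requiring care is keeping the transposes consistent with the convention \(M:N=\tr(MN^T)\). Symmetry of \(A\) (hence of \(A^{-1}\)) is used in both terms; it is guaranteed since \(A\in\Spp^3\), consistent with Lemma~\ref{lem:symmetry}. I do not expect any genuine obstacle.
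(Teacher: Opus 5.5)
Your proof is correct and is essentially the paper's argument: bilinearity, cyclicity of the trace and the symmetry of \(A\) reduce the left-hand side to \(2\tr(MA)-2\tr M\), and \(\tr M=0\) finishes. The only difference is that you group terms by \(\Id\) and \(A^{-1}\), whereas the paper pairs \(\Id-A^{-1}\) with \(MA\) and \(AM^T\) separately; incidentally, your two uses of \(\tr M=0\) (dropping \(2\tr M\), then inserting \(-2\Id:M\)) cancel, so the identity holds for every \(M\in\R^{3\times3}\).
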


\begin{proof}
By cyclicity of trace,
\[
  (\Id-A^{-1}):MA=\tr(MA)-\tr(A^{-1}MA)=\tr(MA)-\tr M=A:M .
\]
The second term gives the same contribution.  Since \(\tr M=0\),
\(A:M=(A-\Id):M\).
\end{proof}

\begin{proposition}[Free-energy identity]
\label{prop:energy}
Every smooth solution of \eqref{eq:momentum}--\eqref{eq:conf} satisfies
\begin{equation}
  \frac{d}{dt}\calE(t)
  +\nu\int_{\T^3}|\nabla u|^2\,dx
  +\frac{\alpha}{2\lambda}
  \int_{\T^3}\tr(A+A^{-1}-2\Id)\,dx
  =0 .
  \label{eq:energy}
\end{equation}
\end{proposition}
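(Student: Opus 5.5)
We prove the free-energy identity \eqref{eq:energy} by computing the time derivatives of the kinetic and entropic parts of \(\calE\) separately and letting the coupling terms cancel via Lemma~\ref{lem:cancellation}.

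\emph{Kinetic part.} Test \eqref{eq:momentum} against \(u\) and integrate over \(\T^3\). Since \(\divv u=0\) and the torus is periodic, the transport term \(\int (u\cdot\nabla u)\cdot u\,dx=\int u\cdot\nabla\tfrac12|u|^2\,dx\) vanishes, and so does the pressure term \(\int\nabla p\cdot u\,dx\). Integrating the viscous term by parts gives \(\nu\int|\nabla u|^2\,dx\). The forcing term becomes
\[
\alpha\int\divv(A-\Id)\cdot u\,dx=-\alpha\int(A-\Id):\nabla u\,dx .
\]
Hence
\[
\frac{d}{dt}\frac12\int|u|^2\,dx+\nu\int|\nabla u|^2\,dx=-\alpha\int(A-\Id):\nabla u\,dx .
\]

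\emph{Entropic part.} By Lemma~\ref{lem:symmetry}, \(A\) stays symmetric, and it stays positive definite in the smooth class. The derivative of \(\Phi_3\) in a symmetric direction \(H\) is \(\tr H-\tr(A^{-1}H)=G:H\), where \(G=\Id-A^{-1}\). The chain rule therefore gives
\[
(\partial_t+u\cdot\nabla)\Phi_3(A)=G:(\partial_t+u\cdot\nabla)A .
\]
Integrating over the torus, the transport contribution \(\int u\cdot\nabla\Phi_3(A)\,dx\) vanishes by incompressibility. This leaves
\[
\frac{d}{dt}\int\Phi_3(A)\,dx=\int G:\bigl(\nabla u\,A+A(\nabla u)^T\bigr)\,dx-\lambda^{-1}\int G:(A-\Id)\,dx .
\]
Apply Lemma~\ref{lem:cancellation} with \(M=\nabla u\), which is admissible because \(\tr\nabla u=\divv u=0\). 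The stretching term then equals \(2\int(A-\Id):\nabla u\,dx\). For the relaxation term, expand
\[
(\Id-A^{-1}):(A-\Id)=\tr(A-\Id-\Id+A^{-1})=\tr(A+A^{-1}-2\Id).
\]

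\emph{Combining.} Multiply the entropic identity by \(\alpha/2\) and add it to the kinetic one. The coupling term \(-\alpha\int(A-\Id):\nabla u\,dx\) from the kinetic part cancels exactly against \(\tfrac{\alpha}{2}\cdot 2\int(A-\Id):\nabla u\,dx\) from the entropic part. What remains is precisely \eqref{eq:energy}.

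The only delicate point is the bookkeeping in this cancellation. It hinges on three things: the sign convention \(M:N=\tr(MN^T)\), which is what turns \(\divv(A-\Id)\cdot u\) into \(-(A-\Id):\nabla u\) with the index order matching Lemma~\ref{lem:cancellation}; the symmetry of \(A\), which justifies the gradient formula for \(\Phi_3\); and the trace-free condition on \(\nabla u\). Everything else is integration by parts on the torus, with no boundary terms. Smoothness justifies differentiating under the integral, and positivity of \(A\) keeps \(A^{-1}\) and \(\log\det A\) smooth.
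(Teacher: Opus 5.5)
Your proposal is correct and follows the paper's proof essentially step for step: you test the momentum equation by \(u\) to get the stress work \(-\alpha\int(A-\Id):\nabla u\,dx\), test the conformation equation by \(D\Phi_3(A)=\Id-A^{-1}\), apply Lemma~\ref{lem:cancellation} with \(M=\nabla u\) and the relaxation identity \((\Id-A^{-1}):(A-\Id)=\tr(A+A^{-1}-2\Id)\), and then multiply by \(\alpha/2\) and add. Your bookkeeping remarks (the Frobenius convention, symmetry of \(A\), and \(\tr\nabla u=0\)) are accurate and make explicit what the paper leaves implicit.
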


\begin{proof}
Testing the momentum equation by \(u\) yields the kinetic balance with stress
work
\[
  -\alpha\int_{\T^3}(A-\Id):\nabla u\,dx .
\]
For the conformation part, \(D\Phi_3(A)=\Id-A^{-1}\).  The transport
contribution integrates to zero because \(\divv u=0\).  Testing the remaining
terms in \eqref{eq:conf} by \(\Id-A^{-1}\) yields
\[
  \frac{d}{dt}\int_{\T^3}\Phi_3(A)\,dx
  =
  2\int_{\T^3}(A-\Id):\nabla u\,dx
  -\lambda^{-1}
  \int_{\T^3}\tr(A+A^{-1}-2\Id)\,dx ,
\]
where Lemma~\ref{lem:cancellation} is used with \(M=\nabla u\), and the
relaxation identity follows from
\[
  (\Id-A^{-1}):(A-\Id)=\tr(A+A^{-1}-2\Id).
\]
The last integrand is nonnegative, since after diagonalizing \(A\) it becomes
\(\sum_i(a_i+a_i^{-1}-2)\).  Multiplying by \(\alpha/2\) and adding gives
\eqref{eq:energy}.
\end{proof}

\begin{proposition}[Logarithmic relaxation control]
\label{prop:log-relax}
For \(B=\Log A\),
\begin{equation}
  |B|^2\le\tr(A+A^{-1}-2\Id).
  \label{eq:pointwise-log-control}
\end{equation}
Consequently, on any interval \([0,T]\) of smooth existence,
\begin{equation}
  \int_0^T\norm{\Log A(t)}_{L^2(\T^3)}^2\,dt
  \le
  \frac{2\lambda}{\alpha}\calE(0).
  \label{eq:log-L2-time}
\end{equation}
\end{proposition}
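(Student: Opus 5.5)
The plan is to reduce \eqref{eq:pointwise-log-control} to a scalar inequality by diagonalizing, and then to obtain \eqref{eq:log-L2-time} by integrating the free-energy identity of Proposition~\ref{prop:energy} in time.

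\textbf{Pointwise bound.} Fix a point and diagonalize \(A=Q\,\mathrm{diag}(a_1,a_2,a_3)\,Q^T\) with \(Q\) orthogonal and \(a_i>0\). Then \(B=\Log A=Q\,\mathrm{diag}(b_1,b_2,b_3)\,Q^T\) with \(b_i=\log a_i\). Since the Frobenius norm is orthogonally invariant, \(|B|^2=\sum_i b_i^2\). The right-hand side is
\[
  \tr(A+A^{-1}-2\Id)=\sum_i(e^{b_i}+e^{-b_i}-2)=\sum_i 2(\cosh b_i-1).
\]
So the claim reduces to the scalar inequality \(2(\cosh b-1)\ge b^2\) for all real \(b\). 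This follows from the power series
\[
  2(\cosh b-1)=b^2+\frac{2b^4}{4!}+\cdots,
\]
whose terms are all nonnegative. Alternatively, \(h(b)=2\cosh b-2-b^2\) is even, satisfies \(h(0)=h'(0)=0\), and has \(h''(b)=2\cosh b-2\ge0\). Summing over \(i\) gives \eqref{eq:pointwise-log-control}.

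\textbf{Time-integrated bound.} Integrate \eqref{eq:energy} over \([0,T]\). This gives
\[
  \calE(T)+\nu\int_0^T\!\!\int|\nabla u|^2\,dx\,dt+\frac{\alpha}{2\lambda}\int_0^T\!\!\int\tr(A+A^{-1}-2\Id)\,dx\,dt=\calE(0).
\]
Next I check that \(\calE(T)\ge0\). The kinetic part is clearly nonnegative. For the entropy part, \(\Phi_3(A)=\sum_i(a_i-\log a_i-1)\ge0\), because \(x-\log x-1\ge0\) for \(x>0\). Dropping the nonnegative terms \(\calE(T)\) and the viscous dissipation, and then applying the pointwise bound inside the remaining integral, gives
\[
  \frac{\alpha}{2\lambda}\int_0^T\norm{B}_{L^2}^2\,dt\le\calE(0).
\]
Multiplying by \(2\lambda/\alpha\) yields \eqref{eq:log-L2-time}.

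\textbf{Main step.} Nothing here is a real obstacle. The only points that need care are:
\begin{itemize}[label={}]
\item the elementary scalar inequality \(2(\cosh b-1)\ge b^2\);
\item the observation that \(\Phi_3\ge0\), so that \(\calE(T)\) can be discarded.
\end{itemize}
Symmetry of \(A\), which the diagonalization uses, is supplied by Lemma~\ref{lem:symmetry}. Smoothness of the solution justifies the time integration of \eqref{eq:energy}.
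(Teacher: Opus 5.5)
Your proof is correct and follows the same route as the paper's: diagonalize to reduce the pointwise bound to the scalar inequality \(2(\cosh b-1)\ge b^2\), then integrate the free-energy identity \eqref{eq:energy} in time. Your explicit check that \(\Phi_3\ge0\), which justifies discarding \(\calE(T)\), supplies a step the paper leaves implicit.
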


\begin{proof}
If the eigenvalues of \(B\) are \(z_1,z_2,z_3\), then the eigenvalues of \(A\)
are \(e^{z_i}\), and
\[
  \tr(A+A^{-1}-2\Id)
  =
  2\sum_{i=1}^3(\cosh z_i-1)
  \ge \sum_{i=1}^3z_i^2=|B|^2 .
\]
The time-integrated bound follows from \eqref{eq:energy}.
\end{proof}

\begin{proposition}[Propagation of the positive cone]
\label{prop:cone}
Let \(A\) solve \eqref{eq:conf} on \([0,T]\), with
\(A(0,x)\in\Spp^3\) and \(\norm{\Log A(0)}_{L^\infty}<\infty\).  If
\[
  \int_0^T\norm{\nabla u(t)}_{L^\infty}\,dt<\infty,
\]
then \(A(t,x)\in\Spp^3\) persists and
\[
  \sup_{0\le t\le T}\norm{\Log A(t)}_{L^\infty}<\infty .
\]
The bound depends only on the initial spectral envelope, \(T,\lambda\), and
\(\int_0^T\norm{\nabla u}_{L^\infty}\,dt\).
\end{proposition}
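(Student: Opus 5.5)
We argue along Lagrangian trajectories. Let \(X(t,x)\) be the flow of \(u\); it is well defined and bi-Lipschitz because \(\int_0^T\norm{\nabla u}_{L^\infty}\,dt<\infty\). Along each trajectory, \eqref{eq:conf} becomes a linear matrix ODE,
\[
  \frac{d}{dt}A=MA+AM^T-\lambda^{-1}(A-\Id),\qquad M(t)=\nabla u(t,X(t,x)).
\]
By Lemma~\ref{lem:symmetry}, \(A\) stays symmetric. We will control the largest eigenvalue of \(A\) from above and the largest eigenvalue of \(A^{-1}\) from above. Together these bound \(\norm{\Log A}_{L^\infty}\), since \(|\Log A|\le\sqrt3\max(\log\lambda_{\max}(A),\log\lambda_{\max}(A^{-1}))\) once both are at least \(1\).

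\textbf{Upper bound.} For a unit vector \(\xi\), set \(f(t)=\xi\cdot A\xi\) and use the Rayleigh quotient. The stretching term satisfies \(\xi\cdot(MA+AM^T)\xi\le 2|M|\,|A|\), and the relaxation contributes \(-\lambda^{-1}(f-1)\). Rather than differentiate \(\lambda_{\max}\), we work with the Frobenius norm \(|A|\), which is comparable to \(\lambda_{\max}\) for \(A>0\). This gives
\[
  \frac{d}{dt}|A|\le(2|M|+\lambda^{-1})|A|+\lambda^{-1}\sqrt3 .
\]
Gronwall then yields \(|A(t)|\le(|A_0|+C\lambda^{-1}T)\exp\bigl(\int_0^T(2\norm{\nabla u}_{L^\infty}+\lambda^{-1})\bigr)\).

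\textbf{Lower bound and positivity.} This is the step we expect to need the most care. Positivity must be shown to persist, and the inverse must be controlled, before any logarithm is taken. The plan is to use Duhamel's formula with the fundamental solution \(\Psi\) of \(\dot\Psi=(M-\tfrac1{2\lambda}\Id)\Psi\), \(\Psi(0)=\Id\):
\[
  A(t)=\Psi(t)A_0\Psi(t)^T+\lambda^{-1}\int_0^t\Psi(t)\Psi(\tau)^{-1}\bigl(\Psi(t)\Psi(\tau)^{-1}\bigr)^T\,d\tau .
\]
The first term is congruent to \(A_0>0\). The integral term is positive semidefinite. Hence \(A(t)\ge\Psi A_0\Psi^T\ge e^{-K_0}\Psi\Psi^T\), where \(K_0=\norm{\Log A_0}_{L^\infty}\). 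The smallest singular value of \(\Psi\) satisfies \(\sigma_{\min}(\Psi(t))\ge\exp(-\int_0^t(|M|+\tfrac1{2\lambda}))\), obtained by Gronwall applied to \(\Psi^{-1}\), which solves \(\frac{d}{dt}\Psi^{-1}=-\Psi^{-1}(M-\tfrac1{2\lambda}\Id)\). This gives
\[
  A(t,X(t,x))\ge e^{-K_0}\exp\Bigl(-2\int_0^T\norm{\nabla u}_{L^\infty}\,dt-T/\lambda\Bigr)\Id>0 .
\]
The same representation also reproves the upper bound, since \(|\Psi|\) is bounded by the analogous exponential.

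\textbf{Conclusion.} Combining the two eigenvalue bounds gives \(e^{-K}\Id\le A\le e^{K}\Id\) uniformly in \(x\) and \(t\le T\), with \(K\) depending only on \(K_0\), \(T\), \(\lambda\) and \(\int_0^T\norm{\nabla u}_{L^\infty}\,dt\). Since the flow map is onto \(\T^3\), this is equivalent to the claimed bound on \(\sup_t\norm{\Log A(t)}_{L^\infty}\).

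The only delicate point is justifying the Duhamel formula. It is derived by checking that \(t\mapsto\Psi^{-1}A\Psi^{-T}\) has derivative \(\lambda^{-1}\Psi^{-1}\Psi^{-T}\), which is a routine computation.
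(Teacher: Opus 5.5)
Your proof is correct, but it reaches positivity and the lower bound by a different mechanism than the paper. The paper works directly with the extremal eigenvalues along trajectories. From the Rayleigh-quotient characterization it derives the Dini-derivative inequalities \(\frac{d}{dt}\lambda_{\max}A\le(2\norm{\nabla u}_{L^\infty}-\lambda^{-1})\lambda_{\max}A+\lambda^{-1}\) and \(\frac{d}{dt}\lambda_{\min}A\ge-(2\norm{\nabla u}_{L^\infty}+\lambda^{-1})\lambda_{\min}A\), and then applies scalar Gronwall. You avoid nonsmooth eigenvalue calculus altogether. Your upper bound comes from a smooth Frobenius-norm estimate. Positivity and the lower bound come from the exact Duhamel representation: the homogeneous propagator acts by congruence \(A_0\mapsto\Psi A_0\Psi^T\), and the source \(\lambda^{-1}\Id\) adds a positive semidefinite integral. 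Both routes give the same lower bound \(\lambda_{\min}A_0\exp\bigl(-\int_0^t(2\norm{\nabla u}_{L^\infty}+\lambda^{-1})\,d\tau\bigr)\).

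Your version makes cone invariance structural. It needs only the identity \(\frac{d}{dt}(\Psi^{-1}A\Psi^{-T})=\lambda^{-1}\Psi^{-1}\Psi^{-T}\), which you verified correctly, and it relies on the equation being affine in \(A\) along trajectories. The paper's version is shorter and keeps the relaxation damping in the upper bound. Its cost is that one must justify one-sided derivatives of \(\lambda_{\max}\) and \(\lambda_{\min}\), which the paper only sketches.

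Two small cleanups. First, your Frobenius inequality really follows from \(\frac{d}{dt}\frac12|A|^2=2\tr(MA^2)-\lambda^{-1}|A|^2+\lambda^{-1}\tr A\); the preceding Rayleigh-quotient sentence plays no role. This identity in fact gives the inequality with \(-\lambda^{-1}\) in place of \(+\lambda^{-1}\). Second, the caveat ``once both are at least \(1\)'' is unnecessary. For the eigenvalues \(a_i\) of any positive definite \(A\), one always has \(\max_i|\log a_i|=\max(\log a_{\max},-\log a_{\min})\).
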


\begin{proof}
By Lemma~\ref{lem:symmetry}, \(A\) remains symmetric.  Along particle
trajectories, the extremal eigenvalues obey, in the sense of upper and lower
Dini derivatives,
\[
  \frac{d}{dt}\lambda_{\max}A
  \le
  (2\norm{\nabla u}_{L^\infty}-\lambda^{-1})\lambda_{\max}A+\lambda^{-1},
\]
and
\[
  \frac{d}{dt}\lambda_{\min}A
  \ge
  -(2\norm{\nabla u}_{L^\infty}+\lambda^{-1})\lambda_{\min}A .
\]
These inequalities follow from the variational characterization of the
largest and smallest eigenvalues applied to
\(\dot A=\nabla u\,A+A(\nabla u)^T-\lambda^{-1}(A-\Id)\) along the flow.
The lower inequality gives the explicit comparison
\[
  \lambda_{\min}A(t,X(t;x))
  \ge
  \lambda_{\min}A_0(x)
  \exp\left(
  -\int_0^t(2\norm{\nabla u(\tau)}_{L^\infty}+\lambda^{-1})\,d\tau
  \right),
\]
so no eigenvalue can reach zero on \([0,T]\).  The upper inequality and
Gronwall's lemma give a finite upper bound depending on the same quantities.
These upper and lower spectral bounds are equivalent to a finite
\(L^\infty\) bound for \(\Log A\).
\end{proof}

\section{Endpoint vorticity clock}

Fix a smooth periodic Littlewood--Paley decomposition
\[
  f=\sum_{j\ge -1}\Delta_jf,\qquad
  \norm{f}_{\Besov}=\sum_{j\ge -1}\norm{\Delta_jf}_{L^\infty}.
\]

\begin{lemma}[Endpoint Biot--Savart estimate]
\label{lem:biot}
For divergence-free \(u\) on \(\T^3\), with \(\omega=\curl u\),
\begin{equation}
  \norm{\nabla u}_{\Besov}
  +
  \norm{\nabla u}_{L^\infty}
  \le C\norm{\omega}_{\Besov}.
  \label{eq:biot}
\end{equation}
\end{lemma}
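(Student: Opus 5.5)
The plan is to reduce the estimate to the Biot--Savart representation of \(\nabla u\) as an order-zero Fourier multiplier applied to \(\omega\), and then use that such multipliers are bounded on each dyadic block in \(L^\infty\).

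\textbf{Representation.} First subtract the conserved mean \(\bar u\) of \(u\); this does not change \(\nabla u\). Since \(\divv u=0\), the identity \(\curl\curl u=-\Delta u\) gives, on zero-mean fields,
\[
  u-\bar u=(-\Delta)^{-1}\curl\omega .
\]
Hence \(\partial_k u_i=\sum_{j,l}m_{kijl}(D)\,\omega_l\), where each symbol \(m_{kijl}(\xi)\) is a linear combination of \(\xi_k\xi_j/|\xi|^2\), \(\xi\in\mathbb Z^3\setminus\{0\}\). These symbols are smooth and homogeneous of degree zero away from the origin. The zero mode plays no role, because \(\omega=\curl u\) has zero mean and \(\nabla u\) has zero mean.

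\textbf{Dyadic blocks with \(j\ge0\).} For such \(j\), \(\Delta_j\) localizes frequencies to an annulus \(|\xi|\sim2^j\). Let \(\tilde\varphi\) be a smooth bump equal to one on the support of \(\varphi\) and supported in a slightly larger annulus. Then
\[
  \Delta_j m(D)\omega=\bigl[m\tilde\varphi(2^{-j}\cdot)\bigr](D)\,\Delta_j\omega .
\]
By homogeneity, \(m\tilde\varphi(2^{-j}\cdot)\) is the rescaling of the fixed Schwartz symbol \(m\tilde\varphi\). On \(\R^3\), its kernel is therefore an \(L^1\)-dilate of a fixed integrable kernel, with \(L^1\) norm independent of \(j\). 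On \(\T^3\), the periodized kernel has the same \(L^1(\T^3)\) norm bound by the Poisson summation / transference principle, since the symbol is smooth and is sampled at integer points. Young's inequality then gives
\[
  \norm{\Delta_j\nabla u}_{L^\infty}\le C\norm{\Delta_j\omega}_{L^\infty}.
\]

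\textbf{Low block \(j=-1\).} \(\Delta_{-1}\) retains only finitely many nonzero lattice frequencies, say \(0<|\xi|\le R\) (the zero mode is absent as noted). On this set \(m(\xi)\) is simply bounded, and \(m\chi\), with \(\chi\) a smooth cutoff vanishing near \(0\), is a smooth compactly supported symbol. Its periodized kernel is therefore in \(L^1(\T^3)\), and the same Young argument applies.

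\textbf{Conclusion.} Summing the dyadic estimates over \(j\ge-1\) gives \(\norm{\nabla u}_{\Besov}\le C\norm{\omega}_{\Besov}\). The \(L^\infty\) part then follows from the embedding \(\Besov\hookrightarrow L^\infty\), which is the triangle inequality \(\norm{f}_{L^\infty}\le\sum_j\norm{\Delta_jf}_{L^\infty}\) once the Littlewood--Paley series converges uniformly. This convergence is guaranteed by absolute summability of the blocks in \(L^\infty\).

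The only delicate step is the uniform \(L^1(\T^3)\) bound on the periodized localized multiplier kernels. I would handle it by writing the periodic kernel as the periodization of the \(\R^3\) kernel \(2^{3j}K(2^j x)\) with \(K\in\mathcal S\). Its \(L^1(\T^3)\) norm is then at most \(\norm{K}_{L^1(\R^3)}\).
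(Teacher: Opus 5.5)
Your proof is correct and follows essentially the same route as the paper. After removing the mean, you write \(\nabla u\) as an order-zero multiplier of \(\omega\), bound each dyadic block \(j\ge0\) by a uniformly \(L^1\) periodic kernel (the paper packages this step as Lemma~\ref{lem:appendix-multiplier}), treat the finite-mode low block directly, and conclude with the embedding \(\Besov\hookrightarrow L^\infty\); your use of exact degree-zero homogeneity plus Poisson summation just spells out the ``rescaled Mihlin'' kernel bound the paper cites, and gives the slightly cleaner diagonal estimate \(\norm{\Delta_j\nabla u}_{L^\infty}\le C\norm{\Delta_j\omega}_{L^\infty}\).
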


\begin{proof}
The spatial mean of \(u\) is irrelevant for \(\nabla u\), so subtract it.  The
zero Fourier mode of \(\omega=\curl u\) vanishes, and for every nonzero mode
\[
  \widehat u(k)=\frac{i\,k\times\widehat\omega(k)}{|k|^2}.
\]
Thus each component of \(\nabla u\) is a zero-order Fourier multiplier applied
to \(\omega\), with the multiplier defined to be zero at \(k=0\).  The
inhomogeneous low block contains only finitely many Fourier modes and is
covered by the same bounded symbol estimate.  By
Lemma~\ref{lem:appendix-multiplier}, zero-order periodic multipliers are
bounded on \(B^0_{\infty,1}\), which gives the Besov estimate.  The
\(L^\infty\) estimate then follows from the endpoint embedding
\(\norm{g}_{L^\infty}\le\norm{g}_{B^0_{\infty,1}}\).
\end{proof}

\begin{corollary}
\label{cor:flow-cone}
For a smooth solution of \eqref{eq:momentum}--\eqref{eq:conf} on \([0,T]\),
if
\[
  \int_0^T\norm{\omega(t)}_{\Besov}\,dt<\infty,
\]
then \(\int_0^T\norm{\nabla u(t)}_{L^\infty}\,dt<\infty\).  Hence the
Lagrangian flow is bi-Lipschitz on \([0,T]\), and the positive-cone spectral
envelope of \(A\) is bounded on \([0,T]\).
\end{corollary}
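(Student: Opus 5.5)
The plan is to chain Lemma~\ref{lem:biot} with Proposition~\ref{prop:cone} and then read off the Lagrangian consequences from standard ODE theory.

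\textbf{Step 1: integrability of \(\nabla u\).} For each \(t\in[0,T]\), the smooth solution has divergence-free velocity \(u(t)\), so Lemma~\ref{lem:biot} applies at every fixed time:
\[
  \norm{\nabla u(t)}_{L^\infty}\le C\norm{\omega(t)}_{\Besov},
\]
with \(C\) depending only on the torus and the Littlewood--Paley partition, not on \(t\). For a smooth solution \(t\mapsto\norm{\nabla u(t)}_{L^\infty}\) is continuous, hence measurable. Integrating in time then gives
\[
  \int_0^T\norm{\nabla u(t)}_{L^\infty}\,dt\le C\int_0^T\norm{\omega(t)}_{\Besov}\,dt<\infty .
\]

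\textbf{Step 2: the flow is bi-Lipschitz.} Let \(X(t;x)\) solve \(\dot X=u(t,X)\) with \(X(0;x)=x\). Smoothness of \(u\) on the compact torus gives global existence and uniqueness on \([0,T]\). For two trajectories,
\[
  \frac{d}{dt}|X(t;x)-X(t;y)|\le\norm{\nabla u(t)}_{L^\infty}|X(t;x)-X(t;y)| .
\]
Gronwall's lemma then bounds the Lipschitz constant of \(X(t;\cdot)\) by \(\exp\int_0^T\norm{\nabla u}_{L^\infty}\,d\tau\). Running the same argument for the backward flow, i.e.\ the inverse map, which solves the time-reversed ODE, gives the same bound for the inverse Lipschitz constant. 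On the periodic torus the distance \(|X(t;x)-X(t;y)|\) is read in the quotient metric, or equivalently on lifts; this causes no difficulty because the estimate is local.

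\textbf{Step 3: the spectral envelope.} Since \(A_0-\Id\in H^s\hookrightarrow C^0\) and \(A_0\in\Spp^3\) pointwise on a compact set, \(\norm{\Log A_0}_{L^\infty}<\infty\). Proposition~\ref{prop:cone} therefore applies with the finite integral from Step~1. It yields persistence of positivity and \(\sup_{[0,T]}\norm{\Log A}_{L^\infty}<\infty\), with a bound depending on the initial envelope, \(T\), \(\lambda\) and \(\int_0^T\norm{\omega}_{\Besov}\).

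The only potentially delicate point is that Lemma~\ref{lem:biot} is applied uniformly in time with a time-independent constant, and that the velocity mean is subtracted. Both are built into the lemma, so I expect no genuine obstacle; the corollary is a direct composition of earlier results.
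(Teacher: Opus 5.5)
Your proposal is correct and follows the paper's proof: you integrate the \(L^\infty\) part of Lemma~\ref{lem:biot}, get the bi-Lipschitz flow from Gronwall, and invoke Proposition~\ref{prop:cone}. The only differences are immaterial: you bound trajectory differences where the paper bounds \(D_xX\) and its inverse, and you explicitly check that \(\norm{\Log A_0}_{L^\infty}<\infty\) follows from continuity and pointwise positivity on the compact torus, which the paper leaves implicit.
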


\begin{proof}
The first assertion follows by integrating the \(L^\infty\) part of
Lemma~\ref{lem:biot}.  The ODE for the flow map then gives the standard
bi-Lipschitz bound
\[
  \norm{D_xX(t,\cdot)}_{L^\infty}
  +\norm{D_xX(t,\cdot)^{-1}}_{L^\infty}
  \le
  \exp\left(C\int_0^t\norm{\nabla u(\tau)}_{L^\infty}\,d\tau\right).
\]
The spectral bound for \(A\) is Proposition~\ref{prop:cone}.
\end{proof}

\section{Logarithmic conformation estimates}

Throughout this section \(s>5/2\).  On a compact spectral envelope
\(\norm{B}_{L^\infty}\le K\), matrix functions are understood pointwise by
spectral calculus, equivalently as smooth maps on the finite-dimensional
space \(\Sym^3\).  The compact-range composition lemma in
Appendix~\ref{app:aux} gives
\begin{equation}
  \norm{e^B-\Id}_{H^s}\le C_{K,s}\norm{B}_{H^s},
  \qquad
  \norm{B}_{H^s}\le C_{K,s}\norm{e^B-\Id}_{H^s}.
  \label{eq:matrix-moser}
\end{equation}
Indeed, the first bound is the vanishing composition estimate for
\(\mathcal F(B)=e^B-\Id\) on \(\{|B|\le K\}\).  For the reverse bound, write
\(C=e^B-\Id\).  The range of \(C\) lies in the compact set
\(\{e^X-\Id:\ |X|\le K\}\), where \(I+C\) is uniformly positive, and
\(\mathcal G(C)=\Log(I+C)\) is smooth and vanishes at \(C=0\).  Applying the
same compact-range estimate to \(\mathcal G\) gives the second inequality.

\begin{lemma}[Frechet logarithm on a compact cone]
\label{lem:frechet-log}
For \(A\in\Spp^3\) and symmetric \(H\),
\begin{equation}
  d\Log_A(H)=
  \int_0^\infty (A+r\Id)^{-1}H(A+r\Id)^{-1}\,dr .
  \label{eq:loewner}
\end{equation}
If \(A=e^B\), \(|B|\le K\), then
\begin{equation}
  \norm{d\Log_{e^B}(H)}_{H^s}
  \le
  C_{K,s}\bigl(\norm{H}_{H^s}+\norm{B}_{H^s}\norm{H}_{L^\infty}\bigr).
  \label{eq:frechet-tame}
\end{equation}
Moreover,
\begin{equation}
  d\Log_A(A-\Id)=\Id-A^{-1},
  \label{eq:log-relax-exact}
\end{equation}
and
\[
  (\Id-e^{-B}):B\ge c_K|B|^2,\qquad
  |\Id-e^{-B}|\le C_K|B|.
\]
\end{lemma}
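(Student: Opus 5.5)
The plan is to treat the four assertions of Lemma~\ref{lem:frechet-log} separately, working in an eigenbasis of \(A\) wherever possible.

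\textbf{Integral formula \eqref{eq:loewner}.} Start from the resolvent representation
\[
  \Log A=\int_0^\infty\bigl((1+r)^{-1}\Id-(A+r\Id)^{-1}\bigr)\,dr,
\]
which holds for \(A\in\Spp^3\) because it holds eigenvalue-wise: \(\int_0^\infty((1+r)^{-1}-(a+r)^{-1})\,dr=\log a\). Differentiating under the integral with \(d(A+r\Id)^{-1}(H)=-(A+r\Id)^{-1}H(A+r\Id)^{-1}\) gives \eqref{eq:loewner}. Differentiation under the integral is justified since the integrand derivative is \(O((1+r)^{-2})\) locally uniformly in \(A\).

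\textbf{Identity \eqref{eq:log-relax-exact}.} With \(H=A-\Id\), the matrix \(H\) commutes with \(A\). Diagonalizing, the integrand becomes the diagonal matrix with entries \((a_i-1)/(a_i+r)^2\). Integrating in \(r\) gives \((a_i-1)/a_i=1-a_i^{-1}\), which is exactly \(\Id-A^{-1}\).

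\textbf{Scalar bounds.} In a common eigenbasis, \((\Id-e^{-B}):B=\sum_i(1-e^{-z_i})z_i\) with \(|z_i|\le K\). The function \(z\mapsto(1-e^{-z})/z\) is positive and continuous on \([-K,K]\) (value \(1\) at \(0\)). Taking \(c_K\) as its minimum gives the lower bound. Taking \(C_K\) as its maximum, which is \(\max(1,(e^{K}-1)/K)\), gives \(|\Id-e^{-B}|\le C_K|B|\).

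\textbf{Tame estimate \eqref{eq:frechet-tame}.} This is the main step. Write \(d\Log_{e^B}(H)=\mathcal L(B)[H]\), where \(\mathcal L:\Sym^3\to\mathrm{Lin}(\Sym^3,\Sym^3)\) is a smooth map on the compact set \(\{|B|\le K\}\). Smoothness follows from \eqref{eq:loewner}: on this set \((e^B+r)^{-1}\) is smooth in \(B\) with all derivatives \(O((1+r)^{-1})\) uniformly, so every derivative of the integrand is integrable in \(r\). Equivalently, in an eigenbasis \(\mathcal L(B)\) acts by the divided-difference (Daleckii--Krein) multiplier \((z_i-z_j)/(e^{z_i}-e^{z_j})\), which is smooth and bounded.

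Pointwise, \(d\Log_{e^B}(H)\) is then a sum of terms \(m_{k}(B)\,H_{k}\), with \(m_k\) smooth on the compact range. Apply the Moser product estimate in \(H^s\), valid since \(s>3/2\):
\[
  \norm{fg}_{H^s}\le C\bigl(\norm f_{L^\infty}\norm g_{H^s}+\norm f_{H^s}\norm g_{L^\infty}\bigr).
\]
Here \(f=m_k(B)\) and \(g=H_k\).

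The remaining difficulty is that \(m_k(B)\) need not vanish at \(B=0\), so \(m_k(B)\notin H^s\) in general. To handle this, split \(m_k(B)=m_k(0)+(m_k(B)-m_k(0))\). The constant part contributes \(C\norm H_{H^s}\). The vanishing part is controlled by the compact-range composition lemma of Appendix~\ref{app:aux}, already used for \eqref{eq:matrix-moser}:
\[
  \norm{m_k(B)-m_k(0)}_{H^s}\le C_{K,s}\norm B_{H^s}.
\]
Combining these with \(\norm{m_k(B)}_{L^\infty}\le C_K\) yields \eqref{eq:frechet-tame}. The only care needed is making sure the composition lemma applies to matrix-valued \(\mathcal L\), which holds because \(\mathcal L\) is smooth on a neighborhood of the compact range.
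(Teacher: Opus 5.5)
Your proposal is correct and follows the paper's route. The splitting \(m_k(B)=m_k(0)+(m_k(B)-m_k(0))\) is exactly the paper's decomposition \(d\Log_{e^B}(H)=H+\mathcal M(B)H\) with \(\mathcal M(0)=0\) (because \(d\Log_{\Id}\) is the identity), closed by the same tame product and vanishing compact-range composition estimates; \eqref{eq:log-relax-exact} and the scalar bounds are the same eigenbasis computations, and your resolvent derivation just fills in the Loewner formula the paper cites.

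One small slip: constants do lie in \(H^s(\T^3)\), so \(m_k(B)\in H^s\). The real reason for the splitting is that \(\norm{m_k(B)}_{H^s}\) is only bounded by \(C_{K,s}(1+\norm{B}_{H^s})\), which would leave an extra \(\norm{H}_{L^\infty}\) term; this term is harmless anyway, since \(\norm{H}_{L^\infty}\le C\norm{H}_{H^s}\) for \(s>3/2\).
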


\begin{proof}
Formula \eqref{eq:loewner} is the Loewner representation of the matrix
logarithm on the positive cone.  On \(e^{-K}\Id\le A\le e^K\Id\), the
resolvent factors are uniformly controlled.  For the Sobolev estimate, the
finite-dimensional map
\[
  (B,H)\mapsto d\Log_{e^B}(H)
\]
is smooth on \(\{|B|\le K\}\) and linear in \(H\).  Decompose it as
\[
  d\Log_{e^B}(H)=H+\mathcal M(B)H,\qquad \mathcal M(0)=0 .
\]
The \(L^\infty\) version of the product estimate in
Lemma~\ref{lem:appendix-product}, together with compact-range composition,
gives \eqref{eq:frechet-tame}.  To prove \eqref{eq:log-relax-exact},
diagonalize
\(A=Q\operatorname{diag}(a_i)Q^T\).  Since \(A-\Id\) commutes with \(A\), the
derivative acts on eigenvalues as \(d(\log a_i)(a_i-1)=(a_i-1)/a_i\).
The coercivity is the scalar inequality
\((1-e^{-b})b\ge c_K b^2\) on \([-K,K]\).
\end{proof}

\begin{lemma}[Endpoint transport commutator]
\label{lem:transport}
Let \(s>5/2\), \(\divv u=0\), and \(F:\T^3\to\R^m\) be smooth.  Then
\begin{equation}
  \left|
  \int_{\T^3}\Lambda^s(u\cdot\nabla F)\cdot\Lambda^sF\,dx
  \right|
  \le
  C_s\norm{\nabla u}_{B^0_{\infty,1}}\norm{F}_{H^s}^2 .
  \label{eq:transport}
\end{equation}
Consequently,
\begin{equation}
  \left|
  \int_{\T^3}\Lambda^s(u\cdot\nabla F)\cdot\Lambda^sF\,dx
  \right|
  \le
  C_s\norm{\omega}_{\Besov}\norm{F}_{H^s}^2 .
  \label{eq:transport-vorticity}
\end{equation}
\end{lemma}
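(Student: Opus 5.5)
The plan is the classical Kato--Ponce commutator argument, with the endpoint Besov norm of \(\nabla u\) replacing the usual \(\norm{\nabla u}_{L^\infty}\). First, split
\[
  \Lambda^s(u\cdot\nabla F)=u\cdot\nabla\Lambda^sF+[\Lambda^s,u\cdot\nabla]F .
\]
Because \(\divv u=0\), integration by parts shows that the first term contributes nothing:
\[
  \int_{\T^3}(u\cdot\nabla\Lambda^sF)\cdot\Lambda^sF\,dx=\tfrac12\int_{\T^3}u\cdot\nabla|\Lambda^sF|^2\,dx=0 .
\]
The mean of \(u\) is a constant vector, and \(\Lambda^s\) commutes with \(c\cdot\nabla\). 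So we may subtract the mean without loss. The whole task then reduces to the commutator bound
\[
  \norm{[\Lambda^s,u\cdot\nabla]F}_{L^2}\le C_s\norm{\nabla u}_{B^0_{\infty,1}}\norm{F}_{H^s}.
\]
Once this holds, Cauchy--Schwarz gives \eqref{eq:transport}. The estimate \eqref{eq:transport-vorticity} then follows at once from Lemma~\ref{lem:biot}.

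To prove the commutator bound, I would use Bony's paraproduct decomposition of \(u^k\partial_kF\) into
\[
  T_{u^k}\partial_kF+T_{\partial_kF}u^k+R(u^k,\partial_kF).
\]
I would then commute \(\Lambda^s\) (or the dyadic blocks \(\Delta_j\)) through each piece separately.

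\emph{Low--high piece.} The term \([\Lambda^s,T_{u^k}]\partial_kF\) is the standard paraproduct commutator. On each dyadic block it is controlled by \(\norm{\nabla S_{j-1}u}_{L^\infty}\,2^{js}\norm{\Delta_jF}_{L^2}\). Summing in \(\ell^2\) gives \(\sup_j\norm{S_{j-1}\nabla u}_{L^\infty}\norm{F}_{H^s}\), which is at most \(C\norm{\nabla u}_{B^0_{\infty,1}}\norm{F}_{H^s}\). This is precisely where \(B^0_{\infty,1}\) is needed: the summability gives uniform control of the low-frequency cutoffs, whereas \(L^\infty\) alone would not survive the Riesz transforms in Lemma~\ref{lem:biot}.

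\emph{High--low and remainder pieces.} For these terms I would not commute at all. I would estimate \(\Lambda^s(T_{\partial_kF}u^k)\), \(\Lambda^sR(u^k,\partial_kF)\), \(T_{\Lambda^s\partial_kF}u^k\) and the other terms directly. The terms containing \(\Lambda^s u\) are controlled by \(\norm{\nabla F}_{L^\infty}\norm{\nabla u}_{H^{s-1}}\)-type quantities. That is not the claimed right-hand side, so this needs care.

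This is the step I expect to be the main obstacle. The genuine Kato--Ponce bound includes a term \(\norm{\nabla F}_{L^\infty}\norm{u}_{H^s}\), and the stated inequality has no such term. Here is how I would try to avoid it. In the high--low paraproduct \(T_{\partial_kF}u^k\), the \(u\)-factor sits at high frequency \(2^j\). I would put \(\nabla\Delta_ju\) in \(L^\infty\), summed via the Besov norm. The derivative count \(2^{js}\) would then be carried by \(F\) at lower frequency, using \(\norm{S_{j-1}\nabla F}_{L^2}\lesssim 2^{j(1-s)}\cdot(\text{an } \ell^2 \text{ sequence})\norm{F}_{H^s}\), which is valid since \(s>1\). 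Thus
\[
  2^{js}\norm{\Delta_ju}_{L^\infty}\norm{S_{j-1}\nabla F}_{L^2}\lesssim\norm{\Delta_j\nabla u}_{L^\infty}\,2^{j}\norm{S_{j-1}\nabla F}_{L^2}2^{j(s-1)}\cdot2^{-j},
\]
and the resulting sequence is summable against \(\norm{\nabla u}_{B^0_{\infty,1}}\norm{F}_{H^s}\). The remainder \(R\) is handled the same way, using \(s>0\) and \(\divv u=0\) to write \(R(u^k,\partial_kF)=\partial_kR(u^k,F)\). In each case the Besov summability in \(j\) replaces the missing \(\norm{u}_{H^s}\) factor. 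I would cite the appendix product and commutator lemmas for these dyadic bookkeeping steps, and on the torus I would treat the finitely many low modes by the bounded-symbol argument used in Lemma~\ref{lem:biot}.
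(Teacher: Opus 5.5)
Your suspicion in the ``main obstacle'' paragraph is correct, and the workaround is where the argument breaks. The bound $\norm{S_{j-1}\nabla F}_{L^2}\lesssim 2^{j(1-s)}c_j\norm{F}_{H^s}$ is the estimate for low-frequency truncations of a function of \emph{negative} regularity. For $\nabla F\in H^{s-1}$ it would require $s<1$, not $s>1$. When $s>1$ we have $S_{j-1}\nabla F\to\nabla F$ in $L^2$, so nothing better than $\norm{S_{j-1}\nabla F}_{L^2}\le C\norm{\nabla F}_{L^2}$ is available. The high--low piece $T_{\partial_kF}u^k$ then contributes
\[
  2^{js}\norm{\Delta_ju}_{L^\infty}\norm{S_{j-1}\nabla F}_{L^2}\approx 2^{j(s-1)}\norm{\Delta_j\nabla u}_{L^\infty}\norm{\nabla F}_{L^2},
\]
which $\norm{\nabla u}_{B^0_{\infty,1}}$ does not control. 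Your low--high and remainder pieces are fine; the low--high piece in fact needs only $\norm{\nabla u}_{L^\infty}$.

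This is not a bookkeeping issue. The Kato--Ponce term $\norm{\nabla F}_{L^\infty}\norm{u}_{H^s}$ cannot be dropped, and \eqref{eq:transport} and \eqref{eq:transport-vorticity} are false for $s>1$. Here is a counterexample on $\T^3=(\R/2\pi\mathbb Z)^3$:
\begin{itemize}
\item Take $u=(0,N^{-1}\sin(Nx_1),0)$, which is divergence free, and the scalar $F=\sin x_2+N^{-s}\sin(Nx_1)\cos x_2$.
\item Then $\nabla u$ has the single nonzero entry $\cos(Nx_1)$. Hence $\norm{\nabla u}_{B^0_{\infty,1}}+\norm{\omega}_{\Besov}\le C$ and $\norm{F}_{H^s}\le C$, uniformly in $N$.
\item On the other hand, $u\cdot\nabla F=N^{-1}\sin(Nx_1)\cos x_2-N^{-1-s}\sin^2(Nx_1)\sin x_2$. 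Fourier orthogonality gives, with $\Lambda=(1-\Delta)^{1/2}$,
\[
  \int_{\T^3}\Lambda^s(u\cdot\nabla F)\,\Lambda^sF\,dx
  =2\pi^3N^{-1-s}(N^2+2)^{s}+O(N^{-1-s})\sim 2\pi^3N^{s-1}\to\infty .
\]
\end{itemize}
Mechanically, high-frequency $u$ acts on the low mode of $F$ and lands exactly on the high mode of $F$. That is precisely the high--low interaction you tried to absorb.

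The paper's own proof follows your route: Bony decomposition plus the dyadic bound of Lemma~\ref{lem:appendix-commutator}. It fails at the same point, since its high--low estimate puts $\widetilde\Delta_kF$ where the low-frequency factor $S_{k-1}\nabla F$ belongs. What is true is the Kato--Ponce form, with right-hand side $C_s\bigl(\norm{\nabla u}_{L^\infty}\norm{F}_{H^s}+\norm{\nabla F}_{L^\infty}\norm{u}_{H^s}\bigr)\norm{F}_{H^s}$. The extra term would have to be absorbed downstream, for instance by velocity dissipation together with a separate bound on $\nabla B$. The continuation argument as written does not do this.
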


\begin{proof}
Because \(\divv u=0\), the principal transport contribution
\(\int u\cdot\nabla\Lambda^sF\cdot\Lambda^sF\,dx\) vanishes.  The remaining
term is the commutator \([\Lambda^s,u\cdot\nabla]F\).  Bony's decomposition
and the standard dyadic commutator estimate yield
\[
  \norm{[\Delta_j,u\cdot\nabla]F}_{L^2}
  \le
  c_j\,2^{-js}
  \norm{\nabla u}_{B^0_{\infty,1}}\norm{F}_{H^s},
  \qquad (c_j)\in\ell^2 .
\]
Multiplying by \(2^{js}\), summing, and pairing with \(\Delta_jF\) gives
\eqref{eq:transport}.  Estimate \eqref{eq:transport-vorticity} follows from
Lemma~\ref{lem:biot}.
\end{proof}

Define
\[
  \mathfrak L(B,u)=
  d\Log_{e^B}\left(\nabla u\,e^B+e^B(\nabla u)^T-\lambda^{-1}(e^B-\Id)\right).
\]
Then \(B=\Log A\) satisfies
\[
  \partial_tB+u\cdot\nabla B=\mathfrak L(B,u).
\]

\begin{lemma}[Tame logarithmic source estimate]
\label{lem:log-source}
If \(\norm{B}_{L^\infty}\le K\), then
\begin{equation}
  \norm{\mathfrak L(B,u)}_{H^s}
  \le
  C_{K,s}
  \left[
  \norm{u}_{H^{s+1}}
  +(1+\norm{\omega}_{\Besov})\norm{B}_{H^s}
  \right].
  \label{eq:log-source}
\end{equation}
\end{lemma}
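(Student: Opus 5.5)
We split \(\mathfrak L(B,u)\) into its stretching and relaxation parts and use linearity of \(d\Log_{e^B}\) in its argument:
\[
  \mathfrak L(B,u)=d\Log_{e^B}(H_1)-\lambda^{-1}d\Log_{e^B}(e^B-\Id),
  \qquad H_1=\nabla u\,e^B+e^B(\nabla u)^T .
\]
By \eqref{eq:log-relax-exact}, the relaxation part is exactly \(-\lambda^{-1}(\Id-e^{-B})\). This is a smooth function of \(B\) on \(\{|B|\le K\}\) that vanishes at \(B=0\). The compact-range composition lemma of Appendix~\ref{app:aux}, which is the same estimate used for \eqref{eq:matrix-moser}, then bounds its \(H^s\) norm by \(C_{K,s}\norm{B}_{H^s}\). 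This term needs no velocity factor at all.

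For the stretching part we apply the tame Frechet bound \eqref{eq:frechet-tame} with \(H=H_1\):
\[
  \norm{d\Log_{e^B}(H_1)}_{H^s}\le C_{K,s}\bigl(\norm{H_1}_{H^s}+\norm{B}_{H^s}\norm{H_1}_{L^\infty}\bigr).
\]
The \(L^\infty\) factor is bounded pointwise by \(|H_1|\le 2e^K|\nabla u|\). Lemma~\ref{lem:biot} gives \(\norm{\nabla u}_{L^\infty}\le C\norm{\omega}_{\Besov}\), so the second term is at most \(C_{K,s}\norm{\omega}_{\Besov}\norm{B}_{H^s}\).

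For \(\norm{H_1}_{H^s}\) we write \(e^B=\Id+(e^B-\Id)\). The identity piece gives \(\nabla u+(\nabla u)^T\), with \(H^s\) norm at most \(C\norm{u}_{H^{s+1}}\). For the remaining product \(\nabla u\,(e^B-\Id)\) and its transpose, we use the tame product estimate of Lemma~\ref{lem:appendix-product}:
\[
  \norm{fg}_{H^s}\lesssim\norm{f}_{H^s}\norm{g}_{L^\infty}+\norm{f}_{L^\infty}\norm{g}_{H^s}.
\]
We take \(f=\nabla u\) and \(g=e^B-\Id\). The bounds \(\norm{e^B-\Id}_{L^\infty}\le C_K\) and \eqref{eq:matrix-moser} give
\[
  \norm{\nabla u\,(e^B-\Id)}_{H^s}\le C_K\norm{u}_{H^{s+1}}+C_{K,s}\norm{\nabla u}_{L^\infty}\norm{B}_{H^s}.
\]
Lemma~\ref{lem:biot} converts the last factor to \(\norm{\omega}_{\Besov}\). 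Summing all three contributions gives \eqref{eq:log-source}.

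The main point to get right is that the product estimate is applied in its tame form. Each high-norm factor must be paired only with \(L^\infty\) norms: \(\norm{u}_{H^{s+1}}\) must multiply only \(\norm{e^B-\Id}_{L^\infty}\le C_K\), and \(\norm{B}_{H^s}\) must multiply only \(\norm{\nabla u}_{L^\infty}\). This keeps the estimate linear in \(\norm{B}_{H^s}\) with coefficient \(1+\norm{\omega}_{\Besov}\), and linear in \(\norm{u}_{H^{s+1}}\) with a constant depending only on \(K\). A naive algebra bound would instead produce a product such as \(\norm{u}_{H^{s+1}}\norm{B}_{H^s}\), which is exactly what the continuation argument cannot absorb. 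We must also check that \eqref{eq:frechet-tame} applies to the non-symmetric pieces \(\nabla u\,e^B\) taken separately. It does, since \(H_1\) as a whole is symmetric and we only decompose it when estimating \(\norm{H_1}_{H^s}\).
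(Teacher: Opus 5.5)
Your proof is correct and is essentially the paper's argument. It uses the same split into stretching and relaxation, the exact identity \eqref{eq:log-relax-exact} plus compact-range composition for the relaxation term, and tame product estimates that pair \(\norm{u}_{H^{s+1}}\) only with \(L^\infty\) bounds on functions of \(B\) and \(\norm{B}_{H^s}\) only with a low norm of \(\nabla u\), which Lemma~\ref{lem:biot} converts to \(\norm{\omega}_{\Besov}\).

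The only difference is bookkeeping. The paper writes the stretching part at once as \(\nabla u+(\nabla u)^T+\mathcal Q(B)\nabla u\) with \(\mathcal Q(0)=0\) and applies Lemma~\ref{lem:appendix-product} a single time. You instead route it through the Fr\'echet bound \eqref{eq:frechet-tame} and then a second tame product for \(\norm{H_1}_{H^s}\), which needs only the \(L^\infty\) form of the product estimate.
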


\begin{proof}
Split \(\mathfrak L\) into stretching and relaxation parts.  Since
\(d\Log_{\Id}\) is the identity,
\[
  d\Log_{e^B}\left(\nabla u\,e^B+e^B(\nabla u)^T\right)
  =
  \nabla u+(\nabla u)^T+\mathcal Q(B)\nabla u ,
\]
where \(\mathcal Q(0)=0\) and \(\mathcal Q\) is a smooth finite-dimensional
matrix-valued function on the compact set \(\{|B|\le K\}\).  The first two
terms are bounded by \(C\norm{u}_{H^{s+1}}\).  The remainder is controlled by
Lemma~\ref{lem:appendix-product} with \(Z=\nabla u\) and
\(X=B^0_{\infty,1}\):
\[
  \norm{\mathcal Q(B)\nabla u}_{H^s}
  \le
  C_{K,s}\left(
  \norm{u}_{H^{s+1}}
  +\norm{\nabla u}_{B^0_{\infty,1}}\norm{B}_{H^s}
  \right).
\]
For the relaxation term, \eqref{eq:log-relax-exact} gives exactly
\[
  -\lambda^{-1}d\Log_{e^B}(e^B-\Id)
  =
  -\lambda^{-1}(\Id-e^{-B}),
\]
which is bounded by \(C_{K,s}\norm{B}_{H^s}\) by compact-range composition.
Finally, Lemma~\ref{lem:biot} gives
\(\norm{\nabla u}_{B^0_{\infty,1}}\le C\norm{\omega}_{\Besov}\), completing
the proof.
\end{proof}

\begin{proposition}[Logarithmic high-order estimate]
\label{prop:log-Hs}
On any interval where \(\norm{B}_{L^\infty}\le K\),
\begin{equation}
  \frac{d}{dt}\norm{B}_{H^s}^2
  \le
  \frac{\nu}{4}\norm{u}_{H^{s+1}}^2
  +
  C_{K,s}(1+\norm{\omega}_{\Besov})(1+\norm{B}_{H^s}^2).
  \label{eq:log-Hs}
\end{equation}
\end{proposition}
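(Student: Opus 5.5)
We will apply \(\Lambda^s\) to the logarithmic equation \(\partial_tB+u\cdot\nabla B=\mathfrak L(B,u)\), pair with \(\Lambda^sB\), and integrate over \(\T^3\).  This gives
\[
  \frac12\frac{d}{dt}\norm{\Lambda^sB}_{L^2}^2
  =
  -\int_{\T^3}\Lambda^s(u\cdot\nabla B):\Lambda^sB\,dx
  +\int_{\T^3}\Lambda^s\mathfrak L(B,u):\Lambda^sB\,dx .
\]
The \(L^2\) part of the inhomogeneous \(H^s\) norm is handled by the same computation with \(s=0\).  There the transport term vanishes identically because \(\divv u=0\), and the source is bounded in \(L^2\) by \(C_K(\norm{\nabla u}_{L^2}+\norm{B}_{L^2})\).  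We may therefore work with \(\norm{B}_{H^s}^2\simeq\norm{B}_{L^2}^2+\norm{\Lambda^sB}_{L^2}^2\).

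The transport term is handled directly by Lemma~\ref{lem:transport}, applied componentwise to \(F=B\) (viewed as a map into \(\R^9\)), via \eqref{eq:transport-vorticity}:
\[
  \Bigl|\int\Lambda^s(u\cdot\nabla B):\Lambda^sB\,dx\Bigr|
  \le C_s\norm{\omega}_{\Besov}\norm{B}_{H^s}^2 .
\]
For the source term we use Cauchy--Schwarz and Lemma~\ref{lem:log-source}:
\[
  \Bigl|\int\Lambda^s\mathfrak L:\Lambda^sB\,dx\Bigr|
  \le C_{K,s}\bigl[\norm{u}_{H^{s+1}}+(1+\norm{\omega}_{\Besov})\norm{B}_{H^s}\bigr]\norm{B}_{H^s}.
\]
The only term not already of the target form is \(C_{K,s}\norm{u}_{H^{s+1}}\norm{B}_{H^s}\).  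Young's inequality with weight \(\nu/8\) absorbs it as
\[
  \frac{\nu}{8}\norm{u}_{H^{s+1}}^2+\frac{2C_{K,s}^2}{\nu}\norm{B}_{H^s}^2 .
\]
After multiplying the identity by \(2\), the velocity contribution becomes exactly \(\frac{\nu}{4}\norm{u}_{H^{s+1}}^2\).  The remaining terms are bounded by \(C_{K,s}(1+\norm{\omega}_{\Besov})\norm{B}_{H^s}^2\le C_{K,s}(1+\norm{\omega}_{\Besov})(1+\norm{B}_{H^s}^2)\), since \(\nu\) is one of the fixed parameters allowed in \(C_{K,s}\).  This yields \eqref{eq:log-Hs}.

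The main obstacle is justifying the energy pairing at the regularity of a strong solution, together with the fact that \(\mathfrak L\) is genuinely of top order in \(u\).  The latter is the reason the \(\norm{u}_{H^{s+1}}\) term must be paid by viscous dissipation rather than by the Besov clock.  Only \(B\in C([0,T);H^s)\) is known, while the transport term involves \(\nabla B\in H^{s-1}\).  We would first mollify, replacing \(\Lambda^s\) by \(J_\varepsilon\Lambda^s\) with a Friedrichs mollifier.  The commutator \([J_\varepsilon\Lambda^s,u\cdot\nabla]\) then satisfies the same bound as in Lemma~\ref{lem:transport}, uniformly in \(\varepsilon\).  We derive the differential inequality in integrated form and pass to \(\varepsilon\to0\).

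We also need that \(B\in H^s\) whenever \(A-\Id\in H^s\) and \(\norm{B}_{L^\infty}\le K\); this is supplied by \eqref{eq:matrix-moser}.  Continuity of \(t\mapsto B(t)\) in \(H^s\) follows from the same composition estimates, so the inequality holds for a.e.\ \(t\), equivalently in integrated form, which is all that is needed for the Gronwall argument in the continuation proof.
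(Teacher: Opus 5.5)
Your proposal is correct and follows the paper's proof essentially step for step: you apply \(\Lambda^s\) to the \(B\)-equation, test against \(\Lambda^sB\), bound the transport term by Lemma~\ref{lem:transport} and the source by Lemma~\ref{lem:log-source}, and absorb the cross term \(C_{K,s}\norm{u}_{H^{s+1}}\norm{B}_{H^s}\) by Young's inequality. Your extra bookkeeping is sound and makes explicit what the paper leaves formal: tracking the factor \(2\) from \(\frac12\frac{d}{dt}\) so the coefficient lands exactly on \(\frac{\nu}{4}\), treating the \(L^2\) part of the norm, and justifying the pairing at strong-solution regularity with a Friedrichs mollifier.
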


\begin{proof}
Apply \(\Lambda^s\) to the \(B\)-equation and test by \(\Lambda^sB\).  The
transport part is bounded by Lemma~\ref{lem:transport}; the source part is
bounded by Lemma~\ref{lem:log-source}.  Young's inequality gives
\[
  C\norm{u}_{H^{s+1}}\norm{B}_{H^s}
  \le
  \frac{\nu}{4}\norm{u}_{H^{s+1}}^2+C\norm{B}_{H^s}^2 .
\]
Inserting this bound gives \eqref{eq:log-Hs}.  The velocity dissipation is
therefore recovered only after this estimate is combined with the velocity
energy inequality below.
\end{proof}

\begin{proposition}[Velocity high-order estimate]
\label{prop:u-Hs}
On the same spectral envelope,
\begin{equation}
  \frac{d}{dt}\norm{u}_{H^s}^2+c\nu\norm{u}_{H^{s+1}}^2
  \le
  C_{K,s}(1+\norm{\omega}_{\Besov})
  (1+\norm{u}_{H^s}^2+\norm{B}_{H^s}^2).
  \label{eq:u-Hs}
\end{equation}
\end{proposition}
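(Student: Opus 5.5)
We apply \(\Lambda^s\) to the momentum equation \eqref{eq:momentum}, test against \(\Lambda^s u\), and use \(\divv u=0\) to remove the pressure. The viscous term gives
\[
  \nu\norm{\nabla\Lambda^s u}_{L^2}^2 .
\]
Together with the \(L^2\) part of the energy identity (Proposition~\ref{prop:energy}), which controls \(\norm{u}_{L^2}\), this yields the dissipation \(c\nu\norm{u}_{H^{s+1}}^2\), up to an additive \(C\norm{u}_{H^s}^2\) that we absorb on the right. That leaves two terms on the right-hand side: the convective term \(\int\Lambda^s(u\cdot\nabla u)\cdot\Lambda^s u\,dx\), and the stress coupling \(\alpha\int\Lambda^s\divv(A-\Id)\cdot\Lambda^s u\,dx\).

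For the convective term we invoke Lemma~\ref{lem:transport} with \(F=u\), componentwise. Combined with Lemma~\ref{lem:biot}, it gives the bound
\[
  C_s\norm{\omega}_{\Besov}\norm{u}_{H^s}^2 ,
\]
which already has the form required in \eqref{eq:u-Hs}.

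For the stress coupling we integrate by parts once, moving the divergence onto the velocity:
\[
  \Bigl|\alpha\int\Lambda^s(A-\Id):\nabla\Lambda^s u\,dx\Bigr|
  \le \alpha\norm{A-\Id}_{H^s}\norm{u}_{H^{s+1}} .
\]
On the envelope \(\norm{B}_{L^\infty}\le K\), the matrix Moser estimate \eqref{eq:matrix-moser} gives \(\norm{e^B-\Id}_{H^s}\le C_{K,s}\norm{B}_{H^s}\). Young's inequality then splits the product as
\[
  \frac{c\nu}{2}\norm{u}_{H^{s+1}}^2+C_{K,s}\norm{B}_{H^s}^2 ,
\]
and the first piece is absorbed into the dissipation. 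This is the step that uses velocity dissipation in place of the missing stress diffusion: \(A\) is only in \(H^s\), so the extra derivative has to land on \(u\).

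Collecting the three bounds, and enlarging constants through \(1\le 1+\norm{\omega}_{\Besov}\) and \(1\le 1+\norm{u}_{H^s}^2+\norm{B}_{H^s}^2\), gives \eqref{eq:u-Hs}. The main obstacle is checking the lower-order bookkeeping. Specifically, the \(\norm{u}_{H^s}^2\) term produced when we convert \(\norm{\nabla\Lambda^s u}_{L^2}\) into the full \(\norm{u}_{H^{s+1}}\) norm, and the fixed spatial mean of \(u\), must both land in the \(C_{K,s}(1+\cdots)\) term rather than require a separate Gronwall factor. Since the mean of \(u\) is conserved and \(\norm{u}_{L^2}\) is bounded by \(\calE(0)\), this bookkeeping should be routine.
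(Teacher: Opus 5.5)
Your proposal is correct and follows the paper's proof essentially step for step. The steps match: a \(\Lambda^s\)-energy estimate with the pressure removed by incompressibility; the convective term bounded by \(\norm{\nabla u}_{L^\infty}\norm{u}_{H^s}^2\lesssim\norm{\omega}_{\Besov}\norm{u}_{H^s}^2\), where for \(F=u\) your appeal to Lemma~\ref{lem:transport} is just the classical Kato--Ponce commutator bound that the paper uses directly; and the stress term handled by one integration by parts, \eqref{eq:matrix-moser}, and Young's inequality against the viscous dissipation. Your lower-order bookkeeping is slightly cleaner than the paper's, since moving \(\nu\norm{u}_{H^s}^2\) to the right-hand side recovers the full \(H^{s+1}\) norm, mean included, with no need for the energy identity or any dependence on \(\bar u\).
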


\begin{proof}
Apply \(\Lambda^s\) to the velocity equation and test by \(\Lambda^su\).  The
transport term is bounded by \(\norm{\nabla u}_{L^\infty}\norm{u}_{H^s}^2\),
then by Lemma~\ref{lem:biot}.  The stress term satisfies, by
\eqref{eq:matrix-moser},
\[
  \left|
  \langle\Lambda^s\divv(e^B-\Id),\Lambda^su\rangle
  \right|
  \le
  \frac{\nu}{2}\norm{\nabla u}_{H^s}^2
  +C_{K,s}\norm{B}_{H^s}^2 .
\]
Strictly speaking, the viscous term gives
\(\nu\norm{\nabla u}_{H^s}^2\), which controls
\(\norm{u-\bar u}_{H^{s+1}}^2\) on the torus.  The spatial mean
\(\bar u\) is conserved, and its contribution to the full
\(H^{s+1}\)-norm is absorbed into the harmless constant in the right-hand
side.  This yields the stated estimate with a universal \(c>0\).
\end{proof}

\section{Proof of the geometric criterion}

\begin{proposition}[Local continuation framework]
\label{prop:local}
Let \(s>5/2\) and \(T<\infty\).  If a strong solution on \([0,T)\) satisfies
\[
  \sup_{t<T}\left(
  \norm{u(t)}_{H^s}
  +\norm{A(t)-\Id}_{H^s}
  +\norm{\Log A(t)}_{L^\infty}
  \right)<\infty,
\]
then it can be continued beyond \(T\).
\end{proposition}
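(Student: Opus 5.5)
The plan is the standard argument: a local existence theorem whose lifespan depends only on the size of the data in a compact set of the positive cone, applied at a time \(T-\delta\) close to \(T\), with \(\delta\) smaller than that uniform lifespan. First, fix the bounds
\[
  M:=\sup_{t<T}\bigl(\norm{u(t)}_{H^s}+\norm{A(t)-\Id}_{H^s}\bigr),\qquad
  K:=\sup_{t<T}\norm{\Log A(t)}_{L^\infty}.
\]
By \eqref{eq:matrix-moser}, \(\norm{B(t)}_{H^s}\le C_{K,s}M\) uniformly in \(t<T\). So the data \((u(t),B(t))\) stay in a bounded set of \(H^s\), with \(e^{-K}\Id\le A(t)\le e^K\Id\).

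Second, I would prove local well-posedness in the logarithmic variables \((u,B)\). The lifespan \(T_0=T_0(M,K,s)>0\) should depend only on these quantities. The solution should satisfy \(\norm{B}_{L^\infty}\le 2K+1\) on \([t_0,t_0+T_0]\), and \(H^s\) norms should be bounded by \(2C_{K,s}M+1\). I would construct it by Friedrichs mollification (or a Galerkin truncation) of the system
\[
  \partial_t u+u\cdot\nabla u-\nu\Delta u+\nabla p=\alpha\divv(e^B-\Id),\qquad
  \partial_tB+u\cdot\nabla B=\mathfrak L(B,u).
\]
On the set \(\norm{B}_{L^\infty}\le 2K+1\) the nonlinearities are smooth, so the regularized problems are ODEs on \(H^s\). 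Propositions~\ref{prop:log-Hs} and \ref{prop:u-Hs} apply uniformly in the mollification parameter. There \(\norm{\omega}_{\Besov}\le C\norm{u}_{H^s}\), since \(s>5/2\) gives \(H^{s-1}\hookrightarrow B^0_{\infty,1}\). Summing the two inequalities gives
\[
  Y'\le C(1+Y)^{2},\qquad Y=\norm{u}_{H^s}^2+\norm{B}_{H^s}^2,
\]
while the \(\nu\norm{u}_{H^{s+1}}^2\) term absorbs the \(\frac{\nu}{4}\) term. This yields a uniform time of control. The envelope \(\norm{B}_{L^\infty}\le 2K+1\) persists for a time depending only on \(M,K\) by Proposition~\ref{prop:cone}, or directly from the \(B\)-equation, since \(\norm{\mathfrak L}_{L^\infty}\le C_K(\norm{\nabla u}_{L^\infty}+1)\). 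Compactness (Aubin--Lions) passes to the limit. Uniqueness and continuity in time follow from an \(L^2\) difference estimate using the Lipschitz bounds and the usual Bona--Smith argument. Finally I would set \(A=e^B\) and recover \(A-\Id\in C(H^s)\) by \eqref{eq:matrix-moser}.

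Third, I would take \(t_0=T-T_0/2\) and solve from the data \((u(t_0),A(t_0))\). By uniqueness in the strong class, this solution coincides with the given one on \([t_0,T)\), and it exists up to \(T+T_0/2\), giving the continuation beyond \(T\).

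The main obstacle is the second step. The difficulty is not the a priori estimate, which the earlier propositions already provide. It is making sure the local theory's lifespan depends only on \((M,K)\) and not on finer properties of the data. It also requires that the \(B\)-formulation is equivalent to the original \(A\)-equation for strong solutions, so that uniqueness transfers. For that equivalence I would verify that the chain rule \(\partial_t\Log A=d\Log_A(\partial_tA)\) holds in the strong class, via \eqref{eq:loewner} and the \(C^1\) regularity implied by \(s>5/2\).
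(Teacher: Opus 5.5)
Your proposal is correct, and its outer layer matches the paper's proof. You fix \(M\) and \(K\), obtain a local existence time depending only on \((M,K)\) and the fixed parameters, restart within half that time of \(T\), and glue by uniqueness on the overlap.

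The difference is in how you obtain the uniform lifespan. The paper isolates it as Lemma~\ref{lem:appendix-restart}, proved by a Picard iteration directly in \((u,A)\). There the envelope \(e^{-(K+1)}\Id\le A\le e^{K+1}\Id\) is kept along the iterates by Proposition~\ref{prop:cone}. You instead build the solution in the logarithmic variables \((u,B)\) by regularization and compactness. You reuse Propositions~\ref{prop:log-Hs} and~\ref{prop:u-Hs} and close them through \(H^{s-1}\hookrightarrow\Besov\), so the endpoint clock is dominated by \(\norm{u}_{H^s}\).

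Your route has several advantages. Positivity comes for free, since \(e^B\in\Spp^3\) for every symmetric \(B\), and you reuse the paper's own a priori estimates. It also makes the loss of derivative explicit: the difference term \(\delta u\cdot\nabla A\) prevents a contraction in the full \(C_tH^s\) norm. So the Picard scheme sketched in the appendix must in fact contract in a lower norm, which your \(L^2\) uniqueness plus Bona--Smith handles openly.

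The paper's route never leaves the class in which strong solutions and maximality are defined. It therefore needs neither of two facts you must supply:
\begin{enumerate}
\item the chain rule \(\partial_t\Log A=d\Log_A(\partial_tA)\), to transfer uniqueness;
\item continuity of \(B\mapsto e^B-\Id\) on \(H^s\), not merely the bound \eqref{eq:matrix-moser}, to recover \(A-\Id\in C_tH^s\).
\end{enumerate}
Both hold for \(s>5/2\), and you rightly flag them.

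One small adjustment: for the mollified system, neither Proposition~\ref{prop:cone} nor the characteristic argument applies literally. The bound \(\norm{B}_{L^\infty}\le C\norm{B}_{H^s}\) supplies the envelope for the constants instead. Your inequality then becomes \(Y'\le F(Y)\) with \(F\) locally bounded, which still gives a uniform time.
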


\begin{proof}
Let
\[
  M=\sup_{t<T}\bigl(\norm{u(t)}_{H^s}+\norm{A(t)-\Id}_{H^s}\bigr),
  \qquad
  K=\sup_{t<T}\norm{\Log A(t)}_{L^\infty}.
\]
Then \(e^{-K}\Id\le A(t)\le e^K\Id\) for all \(t<T\).  By the uniform restart
statement, Lemma~\ref{lem:appendix-restart}, there is
\(\tau=\tau(M,K,\nu,\alpha,\lambda,s)>0\) such that every datum taken from the
trajectory at a time \(t<T\) generates a strong solution on
\([t,t+\tau]\).  Choose \(t_0<T\) with \(T-t_0<\tau/2\).  Restarting from
\((u(t_0),A(t_0))\) and using uniqueness on the overlap gives an extension of
the original solution to \([0,t_0+\tau]\), hence beyond \(T\).
\end{proof}

\begin{proof}[Proof of Theorem~\ref{thm:main}]
Assume that the two continuation quantities are finite on \([0,T)\).  Put
\(B=\Log A\) and
\[
  K=\sup_{0\le t<T}\norm{B(t)}_{L^\infty}<\infty .
\]
For every \(T_0<T\), adding \eqref{eq:log-Hs} and \eqref{eq:u-Hs} gives
\[
  \frac{d}{dt}Y(t)+c\nu\norm{u(t)}_{H^{s+1}}^2
  \le
  C_{K,s}(1+\norm{\omega(t)}_{\Besov})(1+Y(t)),
\]
where \(Y(t)=\norm{u(t)}_{H^s}^2+\norm{B(t)}_{H^s}^2\).  Gronwall's inequality
gives
\[
  \sup_{0\le t\le T_0}Y(t)
  +
  c\nu\int_0^{T_0}\norm{u(t)}_{H^{s+1}}^2\,dt
  \le
  C_{K,s,T,Y(0)}
  \exp\left(
    C_{K,s}\int_0^T\norm{\omega(t)}_{\Besov}\,dt
  \right),
\]
where the right-hand side is independent of \(T_0\).  Letting
\(T_0\uparrow T\) gives a uniform bound on \([0,T)\), and also
\(u\in L^2(0,T;H^{s+1})\).  By
\eqref{eq:matrix-moser},
\(A-\Id\) is bounded in \(H^s\), and the spectral envelope keeps \(A\)
uniformly positive.  Proposition~\ref{prop:local} restarts the solution beyond
\(T\).  Applying this with \(T\uparrow T_*\) proves the theorem.
\end{proof}

\begin{proof}[Proof of Corollary~\ref{cor:vorticity-only}]
Suppose, to the contrary, that
\(\int_0^{T_*}\norm{\omega(t)}_{\Besov}\,dt<\infty\).  For each
\(T_0<T_*\), Corollary~\ref{cor:flow-cone} and Proposition~\ref{prop:cone}
give a logarithmic spectral-envelope bound on \([0,T_0]\).  The bound depends
only on the initial envelope, \(T_*,\lambda\), and the total endpoint clock on
\([0,T_*)\), so it is independent of \(T_0\).  Thus
\(\sup_{t<T_*}\norm{\Log A(t)}_{L^\infty}<\infty\), contradicting
Theorem~\ref{thm:main}.
\end{proof}

\section{Good unknown and vorticity--stress coupling}

Let
\[
  \Sigma=A-\Id,\qquad
  \Gamma=\omega-\frac{\alpha}{\nu}\calR\Sigma,\qquad
  \calR=(-\Delta)^{-1}\curl\divv .
\]
Here \(\divv\) acts row-wise on matrix fields and \(\curl\) acts on the
resulting vector field.  The inverse \((-\Delta)^{-1}\) is taken on zero-mean
vector fields; constant tensor modes are removed by the divergence.
Equivalently, for \(k\ne0\),
\[
  \widehat{\calR F}(k)
  =
  |k|^{-2}\,ik\times\bigl(i\widehat F(k)k\bigr),
  \qquad
  \widehat{\calR F}(0)=0 .
\]
Thus \(\calR\) is a homogeneous order-zero multiplier on nonzero modes.

\begin{proposition}[Vorticity--stress coupling]
\label{prop:vorticity-stress}
The vorticity satisfies
\begin{equation}
  \partial_t\omega+u\cdot\nabla\omega-\nu\Delta\omega
  =
  \omega\cdot\nabla u+\alpha\,\curl\divv\Sigma .
  \label{eq:vorticity}
\end{equation}
Moreover,
\begin{equation}
\begin{aligned}
  (\partial_t+u\cdot\nabla-\nu\Delta)\Gamma
  &=
  \omega\cdot\nabla u
  -\frac{\alpha}{\nu}\calR
  \left(\nabla u\,A+A(\nabla u)^T-\lambda^{-1}\Sigma\right)\\
  &\quad
  -\frac{\alpha}{\nu}[u\cdot\nabla,\calR]\Sigma .
\end{aligned}
  \label{eq:Gamma}
\end{equation}
\end{proposition}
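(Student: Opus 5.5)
The plan is a direct computation in two stages. First derive \eqref{eq:vorticity} by taking the curl of \eqref{eq:momentum}. Then obtain \eqref{eq:Gamma} by subtracting \(\frac{\alpha}{\nu}\calR\) applied to the conformation equation \eqref{eq:conf}, arranged so that the parabolic operator acting on \(\calR\Sigma\) reproduces exactly the stress forcing. Throughout I work with a smooth solution, so that all Fourier manipulations are legitimate, and I use the conventions fixed above: mean of \(u\) subtracted, and \(\calR\) vanishing on the zero mode.

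\textbf{Step 1: the vorticity equation.} Apply \(\curl\) to \eqref{eq:momentum}. The pressure term disappears since \(\curl\nabla p=0\). The viscous term commutes with \(\curl\), and \(\divv\Id=0\), so the stress forcing becomes \(\alpha\curl\divv\Sigma\). For the convective term I use the vector identity
\[
\curl(u\cdot\nabla u)=u\cdot\nabla\omega-\omega\cdot\nabla u+\omega\,\divv u-u\,\divv\omega .
\]
Here \(\divv u=0\) by incompressibility and \(\divv\omega=0\) identically. This leaves \(u\cdot\nabla\omega-\omega\cdot\nabla u\), and rearranging gives \eqref{eq:vorticity}.

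\textbf{Step 2: the stress forcing as a Laplacian.} The key observation is that, on the Fourier side,
\[
\curl\divv\Sigma=(-\Delta)\calR\Sigma .
\]
This uses the definition \(\calR=(-\Delta)^{-1}\curl\divv\). Both sides have vanishing zero mode: \(\curl\) of any vector field kills the \(k=0\) mode, and \(\widehat{\calR\Sigma}(0)=0\) by convention. Consequently
\[
-\nu\Delta\Bigl(\tfrac{\alpha}{\nu}\calR\Sigma\Bigr)=\alpha\curl\divv\Sigma .
\]
So when we compute \((\partial_t+u\cdot\nabla-\nu\Delta)\) of \(\Gamma=\omega-\frac{\alpha}{\nu}\calR\Sigma\), the viscous part acting on \(\calR\Sigma\) cancels the stress source in \eqref{eq:vorticity} exactly.

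\textbf{Step 3: the remaining terms.} What is left from the \(\calR\Sigma\) part is
\[
-\tfrac{\alpha}{\nu}\bigl(\partial_t\calR\Sigma+u\cdot\nabla\calR\Sigma\bigr).
\]
Since \(\calR\) is a time-independent Fourier multiplier, \(\partial_t\calR\Sigma=\calR\partial_t\Sigma=\calR\partial_tA\). For the transport term I write
\[
u\cdot\nabla\calR\Sigma=\calR(u\cdot\nabla\Sigma)+[u\cdot\nabla,\calR]\Sigma .
\]
Therefore
\[
\partial_t\calR\Sigma+u\cdot\nabla\calR\Sigma=\calR(\partial_tA+u\cdot\nabla A)+[u\cdot\nabla,\calR]\Sigma .
\]
Substituting \eqref{eq:conf}, in the form \(\partial_tA+u\cdot\nabla A=\nabla u\,A+A(\nabla u)^T-\lambda^{-1}\Sigma\), produces the \(\calR(\cdots)\) term in \eqref{eq:Gamma}. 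The commutator term appears with coefficient \(-\alpha/\nu\), and the term \(\omega\cdot\nabla u\) is carried over unchanged from \eqref{eq:vorticity}.

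\textbf{Main obstacle.} The algebra is routine; the only delicate point is bookkeeping of the mean modes. Whenever \(\calR\) appears, its argument's zero Fourier mode is discarded, and I must check this is consistent with each term. For \(\curl\divv\Sigma\) this holds because the zero mode already vanishes. For \(u\cdot\nabla\Sigma\), a nonzero mean of this term is simply annihilated by \(\calR\) on both sides, so the commutator identity holds as an identity of multipliers with \(\widehat{\calR F}(0)=0\). I would state this explicitly in the proof, together with the remark that smoothness justifies interchanging \(\partial_t\) with \(\calR\).
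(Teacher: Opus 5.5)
Your proposal is correct and follows the paper's proof essentially step for step. You take the curl of the momentum equation using the vortex-stretching identity, use \((-\Delta)\calR\Sigma=\curl\divv\Sigma\) so that the viscous part acting on \(\frac{\alpha}{\nu}\calR\Sigma\) cancels the stress forcing, split \((\partial_t+u\cdot\nabla)\calR\Sigma\) into \(\calR(\partial_t\Sigma+u\cdot\nabla\Sigma)\) plus the commutator, and then substitute the conformation equation; your extra bookkeeping of zero Fourier modes and of \(\partial_t\) commuting with \(\calR\) is harmless and just makes explicit what the paper leaves implicit.
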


\begin{proof}
Taking curl of the momentum equation and using
\(\curl(u\cdot\nabla u)=u\cdot\nabla\omega-\omega\cdot\nabla u\) gives
\eqref{eq:vorticity}.  With our convention
\[
  (-\Delta)\calR\Sigma=\curl\divv\Sigma,
\]
so
\[
  -\nu\Delta\left(\frac{\alpha}{\nu}\calR\Sigma\right)
  =
  \alpha\curl\divv\Sigma .
\]
This is exactly the direct stress forcing in \eqref{eq:vorticity}, and it is
subtracted in \((\partial_t+u\cdot\nabla-\nu\Delta)\Gamma\).  More explicitly,
\[
\begin{aligned}
(\partial_t+u\cdot\nabla-\nu\Delta)\Gamma
&=(\partial_t+u\cdot\nabla-\nu\Delta)\omega\\
&\quad
-\frac{\alpha}{\nu}
(\partial_t+u\cdot\nabla-\nu\Delta)\calR\Sigma ,
\end{aligned}
\]
and the term \(\alpha\curl\divv\Sigma\) from the first line cancels the
diffusive contribution from the second.  Moreover,
\[
  \partial_t\calR\Sigma+u\cdot\nabla\calR\Sigma
  =
  \calR(\partial_t\Sigma+u\cdot\nabla\Sigma)
  +[u\cdot\nabla,\calR]\Sigma .
\]
Inserting
\[
  \partial_t\Sigma+u\cdot\nabla\Sigma
  =
  \nabla u\,A+A(\nabla u)^T-\lambda^{-1}\Sigma
\]
gives \eqref{eq:Gamma}.
\end{proof}

\begin{corollary}[Good-unknown clock]
\label{cor:good}
If \(T_*<\infty\), then
\[
  \int_0^{T_*}
  \left(
  \norm{\Gamma(t)}_{\Besov}
  +\norm{\Sigma(t)}_{\Besov}
  \right)\,dt=\infty .
\]
\end{corollary}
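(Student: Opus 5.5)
We argue by contraposition from Corollary~\ref{cor:vorticity-only}. Suppose \(T_*<\infty\) but
\[
  \int_0^{T_*}\bigl(\norm{\Gamma(t)}_{\Besov}+\norm{\Sigma(t)}_{\Besov}\bigr)\,dt<\infty .
\]
We will deduce \(\int_0^{T_*}\norm{\omega(t)}_{\Besov}\,dt<\infty\), which contradicts Corollary~\ref{cor:vorticity-only}.

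The key step is the algebraic decomposition
\[
  \omega=\Gamma+\frac{\alpha}{\nu}\calR\Sigma .
\]
This follows directly from the definition of \(\Gamma\), and it gives pointwise in time
\[
  \norm{\omega}_{\Besov}\le\norm{\Gamma}_{\Besov}+\frac{\alpha}{\nu}\norm{\calR\Sigma}_{\Besov}.
\]
It then suffices to show that \(\calR\) is bounded on \(\Besov\), i.e. \(\norm{\calR\Sigma}_{\Besov}\le C\norm{\Sigma}_{\Besov}\).

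For this we treat \(\calR\) componentwise. Each component is a periodic Fourier multiplier whose symbol is a finite combination of terms \(k_ik_j/|k|^2\) on \(k\ne0\), set to zero at \(k=0\). These symbols are smooth and homogeneous of degree zero away from the origin. This is exactly the setting of Lemma~\ref{lem:appendix-multiplier}, already used in the proof of Lemma~\ref{lem:biot}. As there, the low-frequency block \(\Delta_{-1}\) contains only finitely many modes and is handled by the bounded-symbol estimate; the vanishing zero mode poses no difficulty, since \(\curl\divv\) kills constant tensors anyway.

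Integrating the pointwise bound in time over \([0,T_*)\) gives
\[
  \int_0^{T_*}\norm{\omega}_{\Besov}\,dt
  \le\int_0^{T_*}\norm{\Gamma}_{\Besov}\,dt
  +C\frac{\alpha}{\nu}\int_0^{T_*}\norm{\Sigma}_{\Besov}\,dt<\infty ,
\]
which is the desired contradiction. Corollary~\ref{cor:vorticity-only} covers the case where the \(\Log A\) envelope might blow up, since it already absorbs that alternative through Proposition~\ref{prop:cone}; no separate spectral argument is needed.

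The only point that needs care is verifying that \(\calR\) satisfies the hypotheses of Lemma~\ref{lem:appendix-multiplier}: a smooth degree-zero symbol, with the finitely many low modes harmless. This is a routine check, and the rest of the argument is elementary.
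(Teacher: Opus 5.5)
Your proposal is correct and matches the paper's proof: you write \(\omega=\Gamma+\frac{\alpha}{\nu}\calR\Sigma\), bound \(\calR\) on \(\Besov\) via Lemma~\ref{lem:appendix-multiplier}, integrate in time, and contradict Corollary~\ref{cor:vorticity-only}. Your explicit check of the degree-zero symbol of \(\calR\) and of its vanishing zero mode spells out what the paper states in one line.
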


\begin{proof}
The operator \(\calR\) is a zero-order multiplier on \(B^0_{\infty,1}\), hence
\[
  \norm{\omega}_{\Besov}
  \le C(\norm{\Gamma}_{\Besov}+\norm{\Sigma}_{\Besov}).
\]
The conclusion follows from Corollary~\ref{cor:vorticity-only}.
\end{proof}

\begin{remark}[Effective vorticity and the remaining obstruction]
The variable \(\Gamma\) is an effective vorticity: it subtracts the part of the
vorticity generated directly by the polymer stress through the elliptic
operator \((-\Delta)^{-1}\curl\divv\).  Thus the derivative-bearing force
\(\alpha\curl\divv\Sigma\) in \eqref{eq:vorticity} is not left as an
uncontrolled source.  What remains in \eqref{eq:Gamma} is the genuine
three-dimensional obstruction.  In two dimensions the vortex stretching term is
absent.  In three dimensions, \(\omega\cdot\nabla u\) persists; the good
unknown isolates this Euler-type term but does not remove it.
\end{remark}

\section{Positive-cone Reynolds states}

Let \(B(t,x)\in\Sym^3\), \(A=e^B\).  A positive-cone Oldroyd--B Reynolds state
is a tuple
\[
  (u,p,B,R,S),\qquad R,S\in\Sym^3,\qquad \divv u=0,
\]
satisfying
\begin{align}
  \partial_t u+\divv(u\otimes u)-\nu\Delta u+\nabla p
  &=
  \alpha\divv(A-\Id)+\divv R,
  \label{eq:reynolds-momentum}\\
  \partial_t A+u\cdot\nabla A-\nabla u\,A-A(\nabla u)^T+\lambda^{-1}(A-\Id)
  &=S .
  \label{eq:reynolds-conf}
\end{align}

\begin{proposition}[Defect work identity]
\label{prop:defect-work}
Every smooth positive-cone Reynolds state satisfies
\begin{equation}
\begin{aligned}
  \frac{d}{dt}\calE(t)
  &+\nu\int_{\T^3}|\nabla u|^2\,dx
  +\frac{\alpha}{2\lambda}
  \int_{\T^3}\tr(A+A^{-1}-2\Id)\,dx\\
  &=
  -\int_{\T^3}R:D(u)\,dx
  +\frac{\alpha}{2}\int_{\T^3}(\Id-A^{-1}):S\,dx .
\end{aligned}
  \label{eq:defect-work}
\end{equation}
\end{proposition}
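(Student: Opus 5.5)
The plan is to repeat the computation behind Proposition~\ref{prop:energy}, keeping track of the two defect terms that were absent there.

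\textbf{Kinetic part.} Test \eqref{eq:reynolds-momentum} by \(u\) and integrate over \(\T^3\).

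The time derivative gives \(\frac{d}{dt}\frac12\int|u|^2\).

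The convective term \(\int\divv(u\otimes u)\cdot u\) and the pressure term \(\int\nabla p\cdot u\) vanish, because \(\divv u=0\) and the domain is periodic.

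The viscous term gives \(\nu\int|\nabla u|^2\).

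The stress force gives \(-\alpha\int(A-\Id):\nabla u\).

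For the Reynolds term, integration by parts gives \(\int\divv R\cdot u=-\int R:\nabla u\). Since \(R\) is symmetric, only the symmetric part of \(\nabla u\) contributes, so this equals \(-\int R:D(u)\). This is the first defect term in \eqref{eq:defect-work}.

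\textbf{Entropy part.} Test \eqref{eq:reynolds-conf} by \(\Id-A^{-1}=D\Phi_3(A)\), which is legitimate because \(A=e^B\) is positive definite. The chain rule for \(\Phi_3\) gives
\[
(\Id-A^{-1}):(\partial_t+u\cdot\nabla)A=(\partial_t+u\cdot\nabla)\Phi_3(A).
\]
After integration the transport contribution drops, again because \(\divv u=0\).

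The stretching terms are handled by Lemma~\ref{lem:cancellation} with \(M=\nabla u\), using \(\tr\nabla u=\divv u=0\). They contribute \(2\int(A-\Id):\nabla u\).

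The relaxation term gives \(-\lambda^{-1}\int\tr(A+A^{-1}-2\Id)\), as in the proof of Proposition~\ref{prop:energy}.

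The right-hand side gives \(\int(\Id-A^{-1}):S\). Altogether,
\[
\frac{d}{dt}\int\Phi_3(A)=2\int(A-\Id):\nabla u-\lambda^{-1}\int\tr(A+A^{-1}-2\Id)+\int(\Id-A^{-1}):S.
\]

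\textbf{Combination.} Multiply the entropy identity by \(\alpha/2\) and add it to the kinetic balance. The stress-work terms \(\mp\alpha\int(A-\Id):\nabla u\) cancel exactly, and what remains is \eqref{eq:defect-work}.

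\textbf{Main point to check.} The only step needing care is the chain rule \(\partial_t\Phi_3(A)=(\Id-A^{-1}):\partial_tA\), which requires \(A\) to stay in the positive cone. Here this is automatic, since \(A=e^B\) with \(B\) smooth and symmetric. The rest is the same bookkeeping as in Proposition~\ref{prop:energy}, together with the observation that the symmetry of \(R\) is what converts \(\nabla u\) into \(D(u)\).
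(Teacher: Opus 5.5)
Your proposal is correct and follows the paper's proof essentially step for step. You test the momentum equation by \(u\), using the symmetry of \(R\) to replace \(\nabla u\) by \(D(u)\). You test the conformation equation by \(\Id-A^{-1}=D\Phi_3(A)\), using Lemma~\ref{lem:cancellation} for the stretching terms. Multiplying the entropy identity by \(\alpha/2\) and adding then cancels the stress work.
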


\begin{proof}
Testing \eqref{eq:reynolds-momentum} by \(u\) gives
\[
  \frac12\frac{d}{dt}\int_{\T^3}|u|^2\,dx
  +\nu\int_{\T^3}|\nabla u|^2\,dx
  =
  -\alpha\int_{\T^3}(A-\Id):\nabla u\,dx
  -\int_{\T^3}R:\nabla u\,dx .
\]
The transport and pressure terms vanish by incompressibility and periodicity.
Since \(R\) is symmetric,
\[
  R:\nabla u=R:D(u).
\]
For the conformation equation, testing \eqref{eq:reynolds-conf} by
\(D\Phi_3(A)=\Id-A^{-1}\) gives
\[
  \frac{d}{dt}\int_{\T^3}\Phi_3(A)\,dx
  =
  2\int_{\T^3}(A-\Id):\nabla u\,dx
  -\lambda^{-1}\int_{\T^3}\tr(A+A^{-1}-2\Id)\,dx
  +\int_{\T^3}(\Id-A^{-1}):S\,dx .
\]
The stretching term uses Lemma~\ref{lem:cancellation}.  Multiplying the last
display by \(\alpha/2\) and adding it to the kinetic balance cancels the
stress work and leaves exactly \eqref{eq:defect-work}.
\end{proof}

\begin{definition}[Energy--entropy admissibility]
A Reynolds state is energy--entropy admissible on a time interval if
\begin{equation}
  \int_{\T^3}
  \left[
  -R:D(u)+\frac{\alpha}{2}(\Id-A^{-1}):S
  \right]dx
  \le0
  \quad\hbox{for a.e. }t.
  \label{eq:admissibility}
\end{equation}
\end{definition}

\subsection{Symmetric trace-free anti-divergence}

\begin{proposition}[Anti-divergence in three dimensions]
\label{prop:antidiv}
Let \(m:\T^3\to\R^3\) be smooth with zero mean.  Then there exists a smooth
symmetric trace-free zero-mean tensor \(R_m\) such that
\[
  \divv R_m=m.
\]
Moreover, for every integer \(k\ge0\),
\[
  \norm{R_m}_{H^{k+1}}\le C_k\norm{m}_{H^k}.
\]
\end{proposition}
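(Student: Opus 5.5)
I will construct \(R_m\) explicitly on the Fourier side as the standard symmetric trace-free anti-divergence used in convex integration, and then read the Sobolev bound off its symbol.

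Write \(m=\sum_{k\ne0}\widehat m(k)e^{ik\cdot x}\); the zero mode is absent by hypothesis. For each \(k\ne0\), I look for a symmetric trace-free matrix \(\widehat R(k)\) with \(i\widehat R(k)k=\widehat m(k)\), so that \(\divv R=m\) row-wise. A natural ansatz is
\[
  \widehat R(k)
  =
  \frac{-i}{|k|^2}\bigl(\widehat m\otimes k+k\otimes\widehat m\bigr)
  +\frac{i\,a(k)}{|k|^4}(k\cdot\widehat m)\,k\otimes k
  +\frac{i\,b(k)}{|k|^2}(k\cdot\widehat m)\,\Id ,
\]
with scalars \(a,b\) chosen to enforce the two constraints.

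First I compute \(i\widehat R(k)k\). The first term gives \(\widehat m+(k\cdot\widehat m)k/|k|^2\), the second gives \(-a(k\cdot\widehat m)k/|k|^2\), and the third gives \(-b(k\cdot\widehat m)k/|k|^2\). Exactness therefore requires \(1-a-b=0\).

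Next I impose the trace-free condition. The trace of the bracket is
\[
  \frac{-i}{|k|^2}\,2(k\cdot\widehat m)+\frac{i a}{|k|^2}(k\cdot\widehat m)+\frac{3ib}{|k|^2}(k\cdot\widehat m),
\]
so I need \(-2+a+3b=0\). Together with \(a+b=1\) this gives \(b=1/2\) and \(a=1/2\). In physical space this is the familiar formula
\[
  R_m=\mathcal{R}\text{-type operator}=\tfrac12\bigl(\nabla\mathcal P v+(\nabla\mathcal P v)^T\bigr)+\tfrac12\bigl(\nabla v+(\nabla v)^T\bigr)-\tfrac12(\divv v)\Id ,\qquad v=\Delta^{-1}m ,
\]
up to the exact normalisation. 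I will simply define \(R_m\) by the Fourier formula, with \(\widehat R(0)=0\).

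With the construction in hand, the remaining properties are routine. \(R_m\) is symmetric because each term is symmetric. It is real because the symbol is odd in \(k\) and multiplied by \(i\), so the conjugation symmetry of \(\widehat m\) is preserved. It has zero mean since \(\widehat R(0)=0\), and it is smooth whenever \(m\) is smooth. The symbol is homogeneous of degree \(-1\) and bounded by \(C|k|^{-1}|\widehat m(k)|\). Hence
\[
  \norm{R_m}_{H^{k+1}}^2\le C\sum_{k\ne0}|k|^{2(k+1)}|k|^{-2}|\widehat m(k)|^2\le C_k\norm{m}_{H^k}^2 ,
\]
using that \(\langle k\rangle\sim|k|\) on nonzero modes.

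The only real obstacle is the linear algebra that makes the trace and divergence constraints compatible. That is settled by the two-parameter ansatz above, so the main care needed is checking signs in the \(i\,k\) convention.
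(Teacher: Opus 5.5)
Your construction is correct and is essentially the paper's. The paper solves \(\Delta\phi+\frac13\nabla\divv\phi=m\), i.e.\ \(\widehat\phi(k)=-|k|^{-2}\bigl(\Id-\frac14\,k\otimes k/|k|^2\bigr)\widehat m(k)\), and sets \(R_m=\nabla\phi+(\nabla\phi)^T-\frac23(\divv\phi)\Id\); the symbol of this operator is exactly yours with \(a=b=\frac12\), the paper simply building symmetry and trace-freeness in through the potential \(\phi\) rather than through undetermined coefficients.

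The only slip is your physical-space aside. With coefficients \(\frac12,\frac12,-\frac12\) it corresponds to \(a=1\), \(b=\frac12\), and is neither trace free nor an exact anti-divergence; the standard normalisation is \(\frac14,\frac34,-\frac12\) with \(\Delta v=m\). Since you define \(R_m\) by the Fourier formula, this does not affect the proof.
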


\begin{proof}
The zero-mean condition is necessary, since the divergence of a periodic
tensor has zero spatial mean.  Define the zero-mean vector field \(\phi\) by
the elliptic system
\[
  \Delta\phi+\frac13\nabla\divv\phi=m .
\]
Equivalently, for \(k\in\mathbb Z^3\setminus\{0\}\),
\[
  \widehat\phi(k)
  =
  -\left(|k|^2\Id+\frac13 k\otimes k\right)^{-1}\widehat m(k),
  \qquad
  \widehat\phi(0)=0 .
\]
The matrix in parentheses has eigenvalues \(|k|^2\) on \(k^\perp\) and
\((4/3)|k|^2\) in the \(k\)-direction, so the inverse is a homogeneous
multiplier of order \(-2\) on nonzero modes.  Set
\[
  (R_m)_{ij}
  =
  \partial_i\phi_j+\partial_j\phi_i-\frac23\delta_{ij}\divv\phi .
\]
Then \(R_m\) is symmetric, trace free, and has zero spatial mean.  Moreover,
\[
  (\divv R_m)_i
  =
  \Delta\phi_i+\frac13\partial_i\divv\phi=m_i .
\]
Since \(R_m\) is obtained by applying one derivative to a multiplier of order
\(-2\), \(\calA:m\mapsto R_m\) is a Fourier multiplier of order \(-1\).  This
gives \(\norm{R_m}_{H^{k+1}}\le C_k\norm{m}_{H^k}\).  We denote this fixed
linear representative by
\[
  \calA m:=R_m .
\]
\end{proof}

\subsection{Pressure-free residual work}

For smooth \(u,p,B\), with \(\divv u=0\) and \(B(t,x)\in\Sym^3\), define
\[
  \calM[u,p,B]
  =
  \partial_t u+\divv(u\otimes u)-\nu\Delta u+\nabla p
  -\alpha\divv(A-\Id),
\]
and
\[
  \calS[u,B]
  =
  \partial_t A+u\cdot\nabla A-\nabla u\,A-A(\nabla u)^T+\lambda^{-1}(A-\Id).
\]
For \(B\in\Sym^3\), this residual is symmetric: the only nontrivial point is
that \((\nabla u\,A)^T=A(\nabla u)^T\), so
\(\nabla u\,A+A(\nabla u)^T\in\Sym^3\).
Assume the spatial mean of \(u\) is fixed in time, and define the
pressure-free residual
\[
  \calN[u,B]
  =
  \partial_t u+\divv(u\otimes u)-\nu\Delta u-\alpha\divv(A-\Id).
\]
Then \(\calN[u,B]\) has zero spatial mean: the divergence terms integrate to
zero on \(\T^3\), and the fixed spatial mean gives
\(\int_{\T^3}\partial_t u\,dx=0\).  Thus Proposition~\ref{prop:antidiv}
gives the canonical symmetric trace-free tensor
\[
  R_{\calN}=\calA\calN[u,B],
  \qquad
  \divv R_{\calN}=\calN[u,B].
\]
All pressure-free residual-work quantities below are computed with this fixed
representative.  The full residual
\(\calM[u,p,B]=\calN[u,B]+\nabla p\) is represented by
\(R_{\calN}+p\Id\), because \(\divv(p\Id)=\nabla p\).  This representative is
not trace free unless \(p\equiv0\), but the trace part is precisely the
pressure gauge and performs no work:
\[
  \int_{\T^3}p\Id:D(u)\,dx
  =
  \int_{\T^3}p\,\divv u\,dx=0 .
\]
Hence admissibility is equivalent to
\begin{equation}
  \int_{\T^3}
  \left[
  -R_{\calN}:D(u)
  +\frac{\alpha}{2}(\Id-A^{-1}):\calS[u,B]
  \right]dx
  \le0 .
  \label{eq:pressure-free}
\end{equation}

\section{Sharp residual-work cost}

Set
\[
  G(t)=\Id-A(t)^{-1},\qquad
  P(t)=\int_{\T^3}-R_{\calN}(t):D(u(t))\,dx .
\]

\begin{proof}[Proof of Theorem~\ref{thm:residual-main}]
Admissibility in the pressure-free form \eqref{eq:pressure-free} reads
\[
  P(t)+\frac{\alpha}{2}\int_{\T^3}G(t):\calS[u,B](t)\,dx\le0 .
\]
If \(P(t)>0\), Cauchy's inequality gives
\[
  P(t)\le
  \frac{\alpha}{2}\norm{G(t)}_{L^2}\norm{\calS[u,B](t)}_{L^2},
\]
which proves \eqref{eq:main-residual-cost}.  If \(G(t)\equiv0\), the same
admissibility inequality forces \(P(t)\le0\).

For the sharp cost, fix \(t\) and minimize over all candidate defects \(S\)
subject to the same scalar work constraint.  Decompose \(S=aG+S^\perp\) with
\(\int G:S^\perp\,dx=0\).  The work constraint is
\[
  P(t)+\frac{\alpha}{2}a\norm{G(t)}_{L^2}^2\le0,
\]
and
\[
  \norm{S}_{L^2}^2=a^2\norm{G(t)}_{L^2}^2+\norm{S^\perp}_{L^2}^2.
\]
For \(P(t)>0\), the unique minimizer is obtained by taking
\[
  a=-\frac{2P(t)}{\alpha\norm{G(t)}_{L^2}^2},
  \qquad S^\perp=0.
\]
This gives \eqref{eq:main-min-cost}.  For \(P(t)\le0\), the minimum is
attained by \(S=0\).
\end{proof}

\begin{corollary}[Integrated three-channel budget]
\label{cor:three-channel}
For every measurable time window \(E\), define
\[
  W_E=\int_E[P(t)]_+\,dt,\quad
  L_E=\norm{G}_{L^2(E\times\T^3)},\quad
  S_E=\norm{\calS[u,B]}_{L^2(E\times\T^3)}.
\]
Let
\[
  \eta_E=
  \operatorname*{ess\,sup}_{t\in E}
  \frac{\left[-\int_{\T^3}G(t):\calS[u,B](t)\,dx\right]_+}
       {\norm{G(t)}_{L^2}\norm{\calS[u,B](t)}_{L^2}},
\]
with the ratio set equal to \(0\) when the denominator vanishes.  Then every
admissible residual-matching state satisfies
\begin{equation}
  W_E\le \frac{\alpha}{2}\eta_E L_ES_E .
  \label{eq:three-channel}
\end{equation}
\end{corollary}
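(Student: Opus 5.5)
The plan is to integrate the pointwise-in-time admissibility inequality over the window \(E\) and apply Cauchy--Schwarz twice: once in space, through the definition of \(\eta_E\), and once in time. Throughout, \(G(t)=\Id-A(t)^{-1}\) and \(P(t)=\int_{\T^3}-R_{\calN}:D(u)\,dx\) are exactly as in the proof of Theorem~\ref{thm:residual-main}.

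The first step is a pointwise bound on \([P(t)]_+\). By admissibility in the pressure-free form \eqref{eq:pressure-free}, for a.e. \(t\in E\),
\[
  P(t)\le -\frac{\alpha}{2}\int_{\T^3}G(t):\calS[u,B](t)\,dx,
\]
and hence
\[
  [P(t)]_+\le\frac{\alpha}{2}\Bigl[-\int_{\T^3}G(t):\calS[u,B](t)\,dx\Bigr]_+ .
\]
If \(\norm{G(t)}_{L^2}\norm{\calS[u,B](t)}_{L^2}=0\), the right-hand side vanishes, so \([P(t)]_+=0\). This is consistent with the convention that the ratio is \(0\) in that case. Otherwise, the definition of \(\eta_E\) as an essential supremum gives, for a.e. \(t\in E\),
\[
  [P(t)]_+\le\frac{\alpha}{2}\,\eta_E\,\norm{G(t)}_{L^2}\norm{\calS[u,B](t)}_{L^2}.
\]
This sharpens \eqref{eq:main-residual-cost} by the alignment factor \(\eta_E\le1\); the bound \(\eta_E\le 1\) itself follows from Cauchy--Schwarz in space.

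The second step is to integrate over \(E\) and apply Cauchy--Schwarz in time:
\[
  \int_E\norm{G(t)}_{L^2}\norm{\calS[u,B](t)}_{L^2}\,dt
  \le\Bigl(\int_E\norm{G(t)}_{L^2}^2dt\Bigr)^{1/2}\Bigl(\int_E\norm{\calS[u,B](t)}_{L^2}^2dt\Bigr)^{1/2}=L_ES_E,
\]
where the last equality is Fubini. Combining this with the pointwise bound yields \eqref{eq:three-channel}.

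The only delicate point is measurability. I need to check that \(t\mapsto[P(t)]_+\) and the alignment ratio are measurable, so that both \(W_E\) and the essential supremum make sense. This follows from the smoothness of \(u\) and \(B\). Since \(\calA\) is a fixed order \(-1\) multiplier by Proposition~\ref{prop:antidiv}, \(R_{\calN}\) is smooth and \(P\) is continuous. On the set where the denominator is positive, the ratio is continuous, and on its complement it is the constant \(0\), so the ratio is Borel. Infinite values cause no trouble: if \(L_E\) or \(S_E\) is infinite and the product is of the form \(0\cdot\infty\), the pointwise bound already shows the integrand of \(W_E\) vanishes where either norm vanishes. I therefore expect no real obstacle beyond this bookkeeping.
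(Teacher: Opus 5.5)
Your proposal is correct and matches the paper's proof: you bound \([P(t)]_+\) pointwise using the pressure-free admissibility inequality, insert the alignment factor \(\eta_E\) through its essential-supremum definition (with \(\eta_E\le1\) by Cauchy--Schwarz in space), and then apply Cauchy--Schwarz in time. The measurability and degenerate-denominator checks you add are correct; the paper simply leaves them implicit.
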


\begin{proof}
By Cauchy's inequality, \(0\le\eta_E\le1\).  The pressure-free
admissibility inequality gives, for a.e. \(t\in E\),
\[
  [P(t)]_+
  \le
  \frac{\alpha}{2}
  \left[-\int_{\T^3}G(t):\calS[u,B](t)\,dx\right]_+
  \le
  \frac{\alpha}{2}\eta_E
  \norm{G(t)}_{L^2}\norm{\calS[u,B](t)}_{L^2}.
\]
Integrating over \(E\) and applying Cauchy's inequality in time gives
\eqref{eq:three-channel}.
\end{proof}

\begin{remark}[Sharpness and alignment]
The pointwise cost in Theorem~\ref{thm:residual-main} is saturated at a time
\(t\) with \(P(t)>0\) exactly when the conformation residual has no component
orthogonal to \(G(t)\) and is negatively aligned with it:
\[
  \calS[u,B](t,x)
  =
  -\frac{2P(t)}{\alpha\norm{G(t)}_{L^2}^2}G(t,x).
\]
Thus the three factors in \eqref{eq:three-channel} have distinct meanings:
\(L_E\) measures the size of the entropy lever, \(S_E\) measures the available
conformation residual, and \(\eta_E\) measures the negative alignment between
the two.  Near saturation of the integrated inequality can occur only when
the pointwise residuals are nearly antiparallel to \(G\) on the times where
\([P(t)]_+\) is significant and when the time Cauchy--Schwarz step is nearly
sharp.
\end{remark}

\section{Certificates and cone-tip sensitivity}

The integrated inequality \eqref{eq:three-channel} gives a computable
necessary condition.  If available estimates imply
\[
  W_E>\frac{\alpha}{2}\eta_*L_*S_*,
\]
while \(L_E\le L_*\), \(S_E\le S_*\), and \(\eta_E\le\eta_*\), then the
residual matching cannot be energy--entropy admissible.

\begin{proposition}[Near-equilibrium residual barrier]
\label{prop:near-equilibrium}
Assume on \(E\times\T^3\) that
\[
  \norm{B}_{L^\infty}\le\varepsilon,\qquad 0<\varepsilon\le1.
\]
Then
\[
  \norm{G}_{L^2(E\times\T^3)}
  \le e\,\varepsilon\,|E\times\T^3|^{1/2}.
\]
Consequently, every admissible residual-matching state satisfies
\[
  \int_E[P(t)]_+\,dt
  \le
  \frac{\alpha e}{2}
  \eta_E\,\varepsilon\,|E\times\T^3|^{1/2}
  \norm{\calS[u,B]}_{L^2(E\times\T^3)}.
\]
\end{proposition}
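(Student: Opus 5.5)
The plan is a pointwise matrix bound followed by integration and an appeal to Corollary~\ref{cor:three-channel}. The first step is to bound \(|G(t,x)|\) pointwise. Since \(A=e^B\), we have \(G=\Id-e^{-B}\), and \(B\) is symmetric. Diagonalize \(B=Q\operatorname{diag}(z_i)Q^T\) with \(|z_i|\le|B|\le\varepsilon\). Then
\[
  |G|^2=\sum_{i=1}^3(1-e^{-z_i})^2 .
\]
For the scalar factor, use the mean value theorem on \([-\varepsilon,\varepsilon]\):
\[
  |1-e^{-z}|\le e^{|z|}|z|\le e^{\varepsilon}|z|\le e\,|z|,
\]
where the last step uses \(\varepsilon\le1\). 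This is the explicit-constant version of the bound \(|\Id-e^{-B}|\le C_K|B|\) in Lemma~\ref{lem:frechet-log}, now with \(C_1=e\). It yields \(|G(t,x)|\le e\,|B(t,x)|\le e\,\varepsilon\) on \(E\times\T^3\).

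Next, square this bound and integrate over \(E\times\T^3\). This gives \(L_E=\norm{G}_{L^2(E\times\T^3)}\le e\,\varepsilon\,|E\times\T^3|^{1/2}\), which is the first claim.

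For the second claim, insert the bound on \(L_E\) into the integrated three-channel inequality \eqref{eq:three-channel} of Corollary~\ref{cor:three-channel}. That inequality applies to every admissible residual-matching state and reads \(W_E\le\frac{\alpha}{2}\eta_E L_E S_E\). Since \(\eta_E\ge0\) and \(S_E\ge0\), the right-hand side is monotone in \(L_E\). Substituting the bound therefore gives exactly the stated inequality, with \(S_E=\norm{\calS[u,B]}_{L^2(E\times\T^3)}\).

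There is no real obstacle here. The only care needed is in two places. First, the Frobenius norm must be handled via the eigenvalues of the symmetric matrix \(B\); the scalar bound is applied eigenvalue by eigenvalue, and orthogonal invariance of the Frobenius norm transfers it back to \(G\). Second, measurability of \(G\) is not an issue, since \(B\) is smooth.
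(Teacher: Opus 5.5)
Your proof is correct and matches the paper's argument. The paper applies the eigenvalue-wise scalar bound \(|1-e^{-b}|\le e|b|\) for \(|b|\le1\) to get \(|G|\le e|B|\le e\varepsilon\) pointwise, then substitutes the resulting bound on \(L_E\) into \eqref{eq:three-channel}; your mean-value derivation of the scalar inequality and your monotonicity remark only make explicit what the paper leaves implicit.
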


\begin{proof}
By spectral calculus and the scalar bound
\(|1-e^{-b}|\le e|b|\) for \(|b|\le1\), one has
\(|G|=|\Id-e^{-B}|\le e|B|\le e\varepsilon\).  The result follows from
\eqref{eq:three-channel}.
\end{proof}

\begin{corollary}[Cone-tip blow-up of the cost]
Let \(E_n\) be time windows with \(|E_n\times\T^3|\le M\), and suppose
\[
  \norm{B_n}_{L^\infty(E_n\times\T^3)}\le\varepsilon_n\to0,\qquad
  \int_{E_n}[P_n(t)]_+\,dt\ge W_0>0 .
\]
Then every admissible residual-matching state satisfies
\[
  \norm{\calS_n}_{L^2(E_n\times\T^3)}
  \ge
  \frac{2W_0}{\alpha e\,\varepsilon_nM^{1/2}}
  \to\infty .
\]
\end{corollary}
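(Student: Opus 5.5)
The plan is to apply Proposition~\ref{prop:near-equilibrium} to each window \(E_n\) separately and then invert the resulting inequality. Fix \(n\) with \(\varepsilon_n\le1\); since \(\varepsilon_n\to0\), this holds for all large \(n\), and for the finitely many remaining indices I would read the bound as a statement about the tail. The hypothesis \(\norm{B_n}_{L^\infty(E_n\times\T^3)}\le\varepsilon_n\) is exactly the assumption of Proposition~\ref{prop:near-equilibrium}, which gives
\[
  W_0\le\int_{E_n}[P_n(t)]_+\,dt
  \le\frac{\alpha e}{2}\eta_{E_n}\varepsilon_n|E_n\times\T^3|^{1/2}
  \norm{\calS_n}_{L^2(E_n\times\T^3)}.
\]

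Next I would remove the alignment factor and the window size. By Corollary~\ref{cor:three-channel} we have \(0\le\eta_{E_n}\le1\), and by assumption \(|E_n\times\T^3|^{1/2}\le M^{1/2}\). Substituting both gives
\[
  W_0\le\frac{\alpha e}{2}\varepsilon_nM^{1/2}\norm{\calS_n}_{L^2(E_n\times\T^3)}.
\]
Because \(W_0>0\), the right-hand side is positive, so \(\varepsilon_n>0\) and \(M>0\), and the inequality can be divided through. This yields the stated lower bound \(2W_0/(\alpha e\varepsilon_nM^{1/2})\), which tends to infinity as \(\varepsilon_n\to0\).

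The only delicate point is bookkeeping rather than analysis. If \(\varepsilon_n=0\), then \(G\equiv0\) on \(E_n\), and Theorem~\ref{thm:residual-main} forces \(P_n\le0\) a.e., contradicting \(\int_{E_n}[P_n]_+\ge W_0>0\). So no admissible state exists in that case, and the claim is vacuous. One should also check that the restriction \(\varepsilon\le1\) in Proposition~\ref{prop:near-equilibrium} is harmless, which it is for large \(n\). Nothing else in the argument is substantive.
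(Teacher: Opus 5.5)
Your argument is correct and matches the paper's proof: apply Proposition~\ref{prop:near-equilibrium} on each \(E_n\), use \(0\le\eta_{E_n}\le1\) and \(|E_n\times\T^3|\le M\), then divide by \(W_0>0\). Your side remarks are sound and more careful than the paper, which silently ignores that proposition's hypothesis \(\varepsilon\le1\); that restriction is genuine, since \(|\Id-e^{-B}|\le e|B|\) fails for large \(|B|\), so the argument only gives the inequality for indices with \(\varepsilon_n\le1\), which leaves the divergence conclusion unaffected.
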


\begin{proof}
Apply Proposition~\ref{prop:near-equilibrium} on \(E_n\).  Since
\(0\le\eta_{E_n}\le1\) and \(|E_n\times\T^3|\le M\),
\[
  W_0
  \le
  \int_{E_n}[P_n(t)]_+\,dt
  \le
  \frac{\alpha e}{2}
  \varepsilon_n M^{1/2}
  \norm{\calS_n}_{L^2(E_n\times\T^3)} .
\]
Rearranging gives the stated lower bound, which diverges as
\(\varepsilon_n\to0\).
\end{proof}

\section{Maximal-time alternatives}

For a smooth solution on \([0,T)\), define
\[
  \mathcal S_T=\sup_{0\le t<T}\norm{\Log A(t)}_{L^\infty},\qquad
  \mathcal V_T=\int_0^T\norm{\omega(t)}_{\Besov}\,dt,
\]
and
\[
  \mathcal G_T=
  \int_0^T
  \left(\norm{\Gamma(t)}_{\Besov}+\norm{A(t)-\Id}_{\Besov}\right)\,dt .
\]

\begin{proposition}[Tail alternative]
If \(T_*<\infty\) is a maximal time of strong existence, then for every
\(t_0<T_*\) at least one of the following holds:
\[
  \sup_{t_0\le t<T_*}\norm{\Log A(t)}_{L^\infty}=\infty,
\]
or
\[
  \int_{t_0}^{T_*}\norm{\omega(t)}_{\Besov}\,dt=\infty .
\]
If the spectral tail is finite, then
\[
  \int_{t_0}^{T_*}
  \left(\norm{\Gamma(t)}_{\Besov}+\norm{A(t)-\Id}_{\Besov}\right)\,dt=\infty .
\]
\end{proposition}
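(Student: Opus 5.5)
The plan is to reduce the tail alternative to Theorem~\ref{thm:main} and Corollary~\ref{cor:good} by splitting each time integral at \(t_0\).

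\emph{First alternative.} Argue by contradiction. Suppose both
\[
  \sup_{t_0\le t<T_*}\norm{\Log A(t)}_{L^\infty}<\infty
  \quad\text{and}\quad
  \int_{t_0}^{T_*}\norm{\omega(t)}_{\Besov}\,dt<\infty .
\]
On the compact interval \([0,t_0]\) the solution is strong, so \(u\in C([0,t_0];H^s)\) and \(A-\Id\in C([0,t_0];H^s)\). Since \(s>5/2\), \(H^s\hookrightarrow B^0_{\infty,1}\) (indeed \(H^{s-1}\hookrightarrow B^0_{\infty,1}\) because \(s-1>3/2\)). Hence \(\norm{\omega}_{\Besov}\le C\norm{u}_{H^s}\) is bounded on \([0,t_0]\), and \(\int_0^{t_0}\norm{\omega}_{\Besov}\,dt<\infty\).

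Similarly, \(A\) is continuous in \(C^0\) and positive definite on \([0,t_0]\times\T^3\), a compact set. Its eigenvalues therefore lie in a compact subset of \((0,\infty)\), so \(\sup_{t\le t_0}\norm{\Log A}_{L^\infty}<\infty\). Adding the head and tail pieces makes both quantities in \eqref{eq:main-criterion} finite on \([0,T_*)\), contradicting Theorem~\ref{thm:main}.

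\emph{Second alternative.} Assume the spectral tail is finite. Then the first part forces \(\int_{t_0}^{T_*}\norm{\omega}_{\Besov}=\infty\). As in Corollary~\ref{cor:good}, \(\calR\) is a zero-order multiplier bounded on \(B^0_{\infty,1}\) by Lemma~\ref{lem:appendix-multiplier}, and \(\omega=\Gamma+\frac{\alpha}{\nu}\calR\Sigma\). This gives the pointwise-in-time bound
\[
  \norm{\omega}_{\Besov}\le C\bigl(\norm{\Gamma}_{\Besov}+\norm{A-\Id}_{\Besov}\bigr).
\]
Integrating over \([t_0,T_*)\) yields the divergence of the \(\Gamma\)/\(\Sigma\) tail integral. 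This second part uses only the vorticity divergence, so the spectral hypothesis enters only through the first alternative. Alternatively, one could invoke Corollary~\ref{cor:vorticity-only} together with the head finiteness directly.

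\emph{Expected obstacle.} The only delicate step is justifying finiteness of the head pieces on \([0,t_0]\). This requires two facts: the embedding \(H^{s-1}\hookrightarrow\Besov\) on \(\T^3\) for \(s-1>3/2\), obtained by summing \(\norm{\Delta_j f}_{L^\infty}\lesssim 2^{3j/2}\norm{\Delta_j f}_{L^2}\) with Cauchy--Schwarz; and the uniform positivity of \(A\) via continuity and compactness. Once these hold, everything else is a direct citation of earlier results.
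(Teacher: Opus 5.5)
Your proposal is correct and follows the paper's argument: the first alternative by contradiction with Theorem~\ref{thm:main}, and the second via \(\omega=\Gamma+\frac{\alpha}{\nu}\calR\Sigma\) together with the zero-order bound for \(\calR\) on \(\Besov\). The only difference is minor: the paper restarts the strong solution at \(t_0\) and applies the theorem on \([t_0,T_*)\), whereas you keep the initial time \(0\) and check that the contributions on \([0,t_0]\) are finite (via \(H^{s-1}\hookrightarrow\Besov\) and compactness of \([0,t_0]\times\T^3\)), which is equally valid.
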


\begin{proof}
Restart the strong solution at \(t_0\).  If both the spectral tail and the
vorticity tail were finite, Theorem~\ref{thm:main} would continue the solution
beyond \(T_*\), contradicting maximality.  If the spectral tail is finite but
the good-unknown tail were finite, the zero-order multiplier bound for
\(\calR\) would give
\[
  \int_{t_0}^{T_*}\norm{\omega(t)}_{\Besov}\,dt
  \le
  C\int_{t_0}^{T_*}
  \left(\norm{\Gamma(t)}_{\Besov}
  +\norm{A(t)-\Id}_{\Besov}\right)\,dt
  <\infty,
\]
again contradicting the first alternative.
\end{proof}

\section{Discussion}

The continuation criterion should be read as a positive-cone refinement of the
Beale--Kato--Majda mechanism, not as a formal restatement of it.  The endpoint
clock is used here simultaneously for three tasks: to control the Lipschitz
flow, to keep the logarithmic conformation equation within a compact spectral
envelope, and to commute the order-zero stress multipliers without stress
diffusion.  This coupling is the additional point in the Oldroyd--B setting.
The three-dimensional theory nevertheless retains the genuine vortex stretching
obstruction.  Thus the result is not a global regularity theorem.  Rather, it
identifies the endpoint quantity that any large-data regularity proof must
control:
\[
  \int_0^T\norm{\curl u(t)}_{B^0_{\infty,1}}\,dt.
\]
The endpoint nature of this quantity is essential: it is exactly strong enough
to control the Lipschitz flow and the order-zero stress multipliers, but weak
enough to avoid imposing a full Sobolev bound on the velocity gradient.

The comparison with stress-diffusive models is instructive.  If a Laplacian is
added to the conformation equation, high frequencies of \(A\) acquire their own
parabolic smoothing and the derivative carried by \(\divv A\) can be balanced
against stress diffusion.  In the present model there is no such smoothing
reservoir.  The velocity equation is parabolic, but the elastic variable is
transported and stretched, so the proof must trade only on the velocity
dissipation, relaxation, endpoint flow control, and the logarithmic geometry of
\(\Spp^3\).

At the relaxed level, the same geometry appears as a cost principle rather
than as a continuation criterion.  Positive pressure-free residual production
must be paid by a nonzero entropy-dual lever \(G=\Id-A^{-1}\), by a sufficiently
large conformation residual, and by the correct negative alignment.  Near
\(A=\Id\), the lever vanishes and the cost of paying fixed positive work
diverges.  Thus a relaxed continuation beyond the strong regime cannot move
mechanical work into an arbitrary Reynolds defect while keeping the
conformation equation energetically passive.  In numerical or relaxed-model
contexts, the criterion gives a test for closures: a residual closure with
small conformation defect and small entropy-dual lever cannot be
energy--entropy admissible if it produces positive pressure-free work.

\section{Conclusion}

The paper gives a three-dimensional positive-cone formulation of two necessary
constraints for stress-diffusion-free Oldroyd--B dynamics.  For strong
solutions, finite-time breakdown can occur only through loss of the logarithmic
spectral envelope of the conformation tensor or divergence of the endpoint
vorticity clock.  This conclusion is obtained in the log-conformation variable
and uses the endpoint Besov control needed to propagate the Lipschitz flow,
commute order-zero stress multipliers, and close the high-order estimates
without stress diffusion.

The residual-work result gives a complementary constraint at the level of
relaxed positive-cone states.  Positive pressure-free residual work cannot be
assigned freely: the least \(L^2\) conformation residual that can pay such work
is fixed by the entropy-dual lever \(G=\Id-A^{-1}\).  The resulting cone-tip
barrier shows that near the equilibrium \(A=\Id\), admissible positive work
requires a quantitatively large or strongly aligned conformation defect.

Taken together, the continuation and residual-work criteria locate the same
positive-cone obstruction on the two sides of possible breakdown.  Before
breakdown, one must control the endpoint flow clock on a compact logarithmic
cone.  After passing to a relaxed description, any positive pressure-free work
must be paid for by an exact entropy-dual conformation defect.  The results do
not remove the vortex stretching mechanism or prove large-data global
regularity.  They identify what such a proof, or any admissible relaxed
substitute for it, would still have to control.

\appendix

\section{Auxiliary analytic estimates}
\label{app:aux}

This appendix records the standard endpoint estimates used above.  They are
included to make clear that no stress diffusion or hidden parabolic smoothing
of the conformation tensor is being used.

\begin{lemma}[Zero-order multipliers on the endpoint clock]
\label{lem:appendix-multiplier}
Let \(T\) be a periodic Fourier multiplier whose symbol is smooth away from
the origin and satisfies the usual order-zero bounds on nonzero modes.  Define
the value at \(k=0\) arbitrarily, or set it to zero when the operator is
applied only to zero-mean data.  Then
\[
  \norm{Tf}_{B^0_{\infty,1}}
  \le C_T\norm{f}_{B^0_{\infty,1}} .
\]
In particular, the Riesz transforms and the operator
\((-\Delta)^{-1}\curl\divv\) are bounded on \(B^0_{\infty,1}\) when applied
to the zero-mean part of the input.
\end{lemma}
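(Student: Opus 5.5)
The plan is to reduce the estimate to a dyadic statement: on each Littlewood--Paley block, \(T\) acts as a convolution with a kernel whose \(L^1\) norm is bounded uniformly in \(j\). Write \(m(k)\) for the symbol of \(T\) on \(\mathbb Z^3\setminus\{0\}\). Let \(\tilde\varphi\) be a smooth function equal to one on the annulus supporting \(\varphi\), supported in a slightly larger annulus, and set \(\tilde\Delta_j\) for the corresponding fattened block. For \(j\ge0\) we then have \(\Delta_jTf=T_j\Delta_jf\) with
\[
  T_j=\tilde\Delta_j T,\qquad \widehat{T_j g}(k)=\tilde\varphi(2^{-j}k)\,m(k)\,\widehat g(k).
\]
The low block \(j=-1\) contains only finitely many Fourier modes. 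On it \(T\) is a finite trigonometric convolution whose kernel has \(L^1\) norm bounded by a constant depending on \(T\). The value of the symbol at \(k=0\) is irrelevant there, either because it is a fixed number or because the data have zero mean.

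The key step is a uniform bound \(\norm{K_j}_{L^1(\T^3)}\le C_T\) for the periodic kernel
\[
  K_j(x)=\sum_{k\ne0}\tilde\varphi(2^{-j}k)\,m(k)\,e^{ik\cdot x}.
\]
I would extend \(m\) to a smooth homogeneous-type symbol on \(\R^3\setminus\{0\}\) satisfying \(|\partial^\beta m(\xi)|\le C_\beta|\xi|^{-|\beta|}\). In the cases of interest this is automatic, since Riesz transforms and \((-\Delta)^{-1}\curl\divv\) have explicit rational symbols of degree zero. Then \(K_j\) is the periodization, via the Poisson summation formula, of the \(\R^3\) kernel
\[
  k_j(x)=\mathcal F^{-1}\bigl[\tilde\varphi(2^{-j}\xi)m(\xi)\bigr](x)=2^{3j}\kappa_j(2^jx),
  \qquad \widehat{\kappa_j}(\eta)=\tilde\varphi(\eta)\,m(2^j\eta).
\]
The symbol \(\eta\mapsto\tilde\varphi(\eta)m(2^j\eta)\) is supported in a fixed annulus. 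By the order-zero bounds, all of its derivatives up to order four are bounded uniformly in \(j\). Integrating by parts four times gives \(|\kappa_j(y)|\le C(1+|y|)^{-4}\), so \(\norm{k_j}_{L^1(\R^3)}=\norm{\kappa_j}_{L^1}\le C\). Periodization does not increase the \(L^1\) norm, hence \(\norm{K_j}_{L^1(\T^3)}\le C\). If one prefers to work with the Fourier series directly, the same bound follows from a discrete summation-by-parts argument on the lattice.

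With this in hand, Young's inequality gives
\[
  \norm{\Delta_jTf}_{L^\infty}\le\norm{K_j}_{L^1}\norm{\Delta_jf}_{L^\infty}\le C_T\norm{\Delta_jf}_{L^\infty}.
\]
Summing over \(j\ge-1\) yields \(\norm{Tf}_{B^0_{\infty,1}}\le C_T\norm{f}_{B^0_{\infty,1}}\).

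For the particular operators, the symbols \(k_\ell/|k|\) and \(|k|^{-2}\,ik\times(i\widehat F k)\) are homogeneous of degree zero and smooth away from the origin. Setting them to zero at \(k=0\) on zero-mean inputs is exactly the convention already used in the paper.

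The main obstacle is the uniform \(L^1\) kernel bound, specifically passing from \(\R^3\) to the torus without losing uniformity in \(j\). Poisson summation handles this cleanly once \(m\) is extended smoothly off the lattice, so I would state that extension as part of the meaning of "usual order-zero bounds."
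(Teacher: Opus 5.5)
Your proposal is correct and follows the same route as the paper: a uniform \(L^1\) bound for the dyadic kernels of \(T\Delta_j\) via the rescaled Mihlin estimate, a finite-mode treatment of the block \(j=-1\), and summation over \(j\). The fattened block \(\tilde\Delta_j\), which gives \(\norm{\Delta_jTf}_{L^\infty}\le C\norm{\Delta_jf}_{L^\infty}\) directly instead of the paper's sum over neighboring blocks, and the Poisson-summation passage to the torus are explicit versions of steps the paper only calls standard.
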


\begin{proof}
For \(j\ge0\), \(T\Delta_j\) has a convolution kernel with uniformly bounded
\(L^1\)-norm, by the standard rescaled Mihlin estimate.  The inhomogeneous
block \(j=-1\) involves only bounded frequencies and is controlled directly
by the finite-mode kernel.  Hence
\[
  \norm{\Delta_jTf}_{L^\infty}
  \le
  C_T\sum_{|k-j|\le C}\norm{\Delta_kf}_{L^\infty}.
\]
Summing in \(j\) gives the result.
\end{proof}

\begin{lemma}[Endpoint commutator estimate]
\label{lem:appendix-commutator}
Let \(s>5/2\), \(\divv u=0\), and \(F:\T^3\to\R^m\).  Then
\[
  \norm{[\Lambda^s,u\cdot\nabla]F}_{L^2}
  \le
  C_s\norm{\nabla u}_{B^0_{\infty,1}}\norm{F}_{H^s}.
\]
\end{lemma}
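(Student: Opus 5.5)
The plan is to prove the Kato--Ponce-type endpoint commutator bound with Bony's paraproduct decomposition, keeping \(\nabla u\) in \(B^0_{\infty,1}\) rather than \(L^\infty\). Since \(\divv u=0\), write \(u\cdot\nabla F=\partial_i(u^iF)\) when convenient. Expand \(u^i\partial_iF=T_{u^i}\partial_iF+T_{\partial_iF}u^i+R(u^i,\partial_iF)\), and split the commutator as
\[
  [\Lambda^s,u\cdot\nabla]F
  =[\Lambda^s,T_{u^i}]\partial_iF
  +\Lambda^s\bigl(T_{\partial_iF}u^i+R(u^i,\partial_iF)\bigr)
  -\bigl(T_{\Lambda^s\partial_iF}u^i+R(u^i,\Lambda^s\partial_iF)\bigr).
\]
Because \(\nabla\) kills only the mean of \(u\), the low block \(\Delta_{-1}u\) can be handled separately after subtracting \(\bar u\), which commutes with \(\Lambda^s\). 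I then estimate each piece dyadically in \(L^2\), aiming for \(\norm{\Delta_j(\cdot)}_{L^2}\le c_j\norm{\nabla u}_{B^0_{\infty,1}}\norm{F}_{H^s}\) with \((c_j)\in\ell^2\).

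For the paraproduct commutator \([\Lambda^s,S_{k-1}u^i]\Delta_k\partial_iF\), the output is localized at frequency \(2^k\). The standard first-order Taylor (mean-value) commutator lemma with symbol \(|\xi|^s\) gives the bound
\[
  C\,2^{k(s-1)}\norm{\nabla S_{k-1}u}_{L^\infty}\norm{\Delta_k\nabla F}_{L^2}
  \lesssim\norm{\nabla u}_{L^\infty}\,2^{ks}\norm{\Delta_kF}_{L^2},
\]
which is square-summable. Here only \(\norm{\nabla u}_{L^\infty}\le\norm{\nabla u}_{B^0_{\infty,1}}\) is used.

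For \(T_{\partial_iF}u^i\) and \(T_{\Lambda^s\partial_iF}u^i\), the high frequency falls on \(u\). I estimate
\[
  \norm{S_{k-1}\Lambda^s\nabla F}_{L^2}\norm{\Delta_ku}_{L^\infty}
  \lesssim\Bigl(\sum_{l<k}2^{l(s+1)}\norm{\Delta_lF}_{L^2}\Bigr)2^{-k}\norm{\Delta_k\nabla u}_{L^\infty}.
\]
Since \(s+1>0\), the inner sum is \(\lesssim 2^{k}\tilde c_k\,2^{ks}\)-weighted, and after multiplying by \(\Lambda^s\) on the output at frequency \(2^k\) this is bounded by \(\norm{F}_{H^s}\,\norm{\Delta_k\nabla u}_{L^\infty}\) times an \(\ell^2\) factor. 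Here the dyadic summability of \(\norm{\Delta_k\nabla u}_{L^\infty}\), that is the \(B^0_{\infty,1}\) norm, is what closes the sum. For the term with \(T_{\Lambda^s\partial_iF}\) I first use divergence-freeness to move \(\partial_i\) onto \(u^i\) (which vanishes) or onto the product, so that there is no loss of one extra derivative on \(F\).

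The remainders \(R(u^i,\partial_iF)=\partial_iR(u^i,F)\), using \(\divv u=0\), are handled as follows. For \(s>0\), the block of \(\Lambda^s\partial_iR(u^i,F)\) at frequency \(2^j\) is bounded by
\[
  2^{j(s+1)}\sum_{k\ge j-2}\norm{\Delta_ku}_{L^\infty}\norm{\tilde\Delta_kF}_{L^2}
  \lesssim\sum_{k\ge j-2}2^{(j-k)(s+1)}\norm{\Delta_k\nabla u}_{L^\infty}\,2^{ks}\norm{\tilde\Delta_kF}_{L^2}.
\]
Young's convolution inequality then gives an \(\ell^2\) bound by \(\norm{\nabla u}_{L^\infty}\norm{F}_{H^s}\). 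The companion term \(R(u^i,\Lambda^s\partial_iF)\) is treated the same way.

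I expect the main obstacle to be the low-frequency and derivative-placement bookkeeping in the \(T_{\cdot}u\) terms. One must avoid losing a derivative on \(F\) without using stress smoothing, and one must check that the \(j=-1\) block and the mean of \(u\) do not spoil the \(\nabla u\)-only bound. If that step resists, I would fall back on the classical Kato--Ponce bound \(\norm{\nabla u}_{L^\infty}\norm{F}_{H^s}+\norm{u}_{H^s}\norm{\nabla F}_{L^\infty}\). That bound is insufficient here, so the \(B^0_{\infty,1}\) dyadic sum on the \(u\)-high pieces is the essential replacement.
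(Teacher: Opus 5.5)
\textbf{There is a genuine gap, and it cannot be filled, because the inequality as stated is false.}

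Your route is essentially the paper's: Bony decomposition, the low--high part through the first-order commutator with \(\norm{\nabla S_{k-1}u}_{L^\infty}\), and the high--low and high--high parts through \(\norm{\Delta_k\nabla u}_{L^\infty}\). It breaks at the same place as the paper, namely the high--low term \(\Lambda^s(T_{\partial_iF}u^i)\). Your bound \(\sum_{l<k}2^{l(s+1)}\norm{\Delta_lF}_{L^2}\lesssim 2^k\tilde c_k\norm{F}_{H^s}\) correctly handles \(T_{\Lambda^s\partial_iF}u^i\), where no weight sits on the output. It does not transfer to \(\Lambda^s(T_{\partial_iF}u^i)\). There the block at frequency \(2^k\) has size about \(2^{k(s-1)}\norm{S_{k-1}\nabla F}_{L^2}\norm{\Delta_k\nabla u}_{L^\infty}\). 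For \(s>1\) the factor \(\norm{S_{k-1}\nabla F}_{L^2}\le\sum_{l<k}2^{l(1-s)}2^{ls}\norm{\Delta_lF}_{L^2}\) is dominated by the lowest frequencies of \(F\); it is not of order \(2^{k(1-s)}\). None of your devices recovers this loss. The two paraproducts do not cancel, since their \(F\)-factors sit at different frequencies. Rewriting \(T_{\partial_iF}u^i=\partial_i(T_Fu^i)\) only moves the derivative onto the output. Summability of \(\norm{\Delta_k\nabla u}_{L^\infty}\) in \(k\) cannot pay for a polynomial loss \(2^{k(s-1)}\). So the step you flagged as the main obstacle is exactly where the argument fails. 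Your claim that the \(\Besov\) norm replaces the second Kato--Ponce term is not correct.

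In fact no proof can close, because the inequality fails for every \(s>1\). Take \(m=1\), \(F=\sin x_3\), and \(u=N^{-1}(0,0,\sin Nx_1)\) with \(N\in\mathbb{N}\). Then \(\divv u=0\), and the only nonzero entry of \(\nabla u\) is \(\cos Nx_1\). Hence \(\norm{\nabla u}_{\Besov}\) and \(\norm{F}_{H^s}\) are bounded independently of \(N\). But \(u\cdot\nabla F=N^{-1}\sin(Nx_1)\cos x_3\) is an eigenfunction of \(\Lambda^s=(1-\Delta)^{s/2}\) with eigenvalue \((N^2+2)^{s/2}\), while \(u\cdot\nabla\Lambda^sF=2^{s/2}\,u\cdot\nabla F\). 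Therefore
\[
  [\Lambda^s,u\cdot\nabla]F=\bigl((N^2+2)^{s/2}-2^{s/2}\bigr)N^{-1}\sin(Nx_1)\cos x_3 ,
\]
whose \(L^2\) norm grows like \(N^{s-1}\). The homogeneous choice of \(\Lambda\) behaves identically. Adding \(N^{-s}\sin(Nx_1)\cos x_3\) to \(F\) makes the pairing in \eqref{eq:transport} of size about \(2\pi^3N^{s-1}\) as well.

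The paper's proof hides the same failure. Its high--low bound puts \(\tilde\Delta_kF\) with \(|k-j|\le C\), whereas \(T_{\nabla F}u\) actually contains \(S_{j-1}\nabla F\). What is true is the Kato--Ponce form with a second term. Write \(u\cdot\nabla F=\partial_i(u^iF)\). Your remaining estimates handle every piece except \(\Lambda^s\partial_i(T_Fu^i)\), and that piece yields
\[
  \norm{[\Lambda^s,u\cdot\nabla]F}_{L^2}
  \le C_s\bigl(\norm{\nabla u}_{\Besov}\norm{F}_{H^s}+\norm{F}_{L^\infty}\norm{u}_{H^{s+1}}\bigr).
\]
For \(F=B\) with \(\norm{B}_{L^\infty}\le K\), the extra term \(K\norm{u}_{H^{s+1}}\norm{B}_{H^s}\) is absorbed by Young's inequality into the velocity dissipation. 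This is done exactly as for the source term in Proposition~\ref{prop:log-Hs}. So the continuation argument survives once the lemma is corrected in this way.
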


\begin{proof}
Write \(u\cdot\nabla F=T_u\nabla F+T_{\nabla F}u+R(u,\nabla F)\).  For the
principal low--high part, the kernel representation of \(\Delta_j\) gives
\[
  \norm{[\Delta_j,S_{j-1}u\cdot\nabla]\Delta_jF}_{L^2}
  \le
  C\norm{\nabla S_{j-1}u}_{L^\infty}\norm{\Delta_jF}_{L^2}.
\]
The low-frequency factor is bounded by
\(\norm{\nabla u}_{B^0_{\infty,1}}\).  For the high--low and high--high
pieces, Bernstein's inequality and almost orthogonality yield
\[
  \norm{\Delta_j(T_{\nabla F}u)}_{L^2}
  +
  \norm{\Delta_jR(u,\nabla F)}_{L^2}
  \le
  C\sum_{|k-j|\le C}
  \norm{\Delta_k\nabla u}_{L^\infty}
  \norm{\widetilde\Delta_kF}_{L^2}.
\]
Multiplication by \(2^{js}\), square summation in \(j\), and the summability of
the neighboring interactions give the asserted bound.
\end{proof}

\begin{lemma}[Endpoint product estimate]
\label{lem:appendix-product}
Let \(s>5/2\).  Let \(M:U\subset\R^N\to\R^{N_1\times N_2}\) be smooth and let
\(\mathcal K\Subset U\).  If \(B:\T^3\to\mathcal K\) and
\(Z:\T^3\to\R^{N_2}\) are smooth, then
\[
  \norm{M(B)Z}_{H^s}
  \le
  C_{\mathcal K,s,M}
  \left(
  \norm{Z}_{H^s}
  +\norm{Z}_{X}\norm{B}_{H^s}
  \right).
\]
Here \(X\) may be either \(L^\infty\) or \(B^0_{\infty,1}\).
\end{lemma}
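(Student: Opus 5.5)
The plan is a Moser-type tame estimate built from three ingredients: a reduction to matrix entries, the Kato--Ponce fractional Leibniz rule, and compact-range composition. The dependence on \(B\) enters only through \(M(B)\), which we split as \(M(B)=M_0+\widetilde M(B)\), where \(M_0\) is a fixed point value (say \(M_0=M(b_0)\) for some \(b_0\in\mathcal K\)) and \(\widetilde M(B)=M(B)-M_0\). Only the constant part of the constant term survives.

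\emph{Constant part.} The term \(M_0Z\) is trivially bounded by \(C\norm{Z}_{H^s}\).

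\emph{Variable part, Leibniz step.} On \(\T^3\) we use the periodic Kato--Ponce inequality
\[
  \norm{fg}_{H^s}\le C_s\bigl(\norm{f}_{L^\infty}\norm{g}_{H^s}+\norm{f}_{H^s}\norm{g}_{L^\infty}\bigr).
\]
This can be proved from Bony's decomposition exactly as in Lemma~\ref{lem:appendix-commutator}. The paraproduct \(T_fg\) is bounded by \(\norm{f}_{L^\infty}\norm{g}_{H^s}\), while \(T_gf+R(f,g)\) is bounded by \(\norm{g}_{L^\infty}\norm{f}_{H^s}\). The latter uses \(s>0\) for the remainder term.

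We apply this entrywise with \(f=\widetilde M(B)\) and \(g=Z\). This gives
\[
  \norm{\widetilde M(B)Z}_{H^s}\le C\bigl(\sup_{\mathcal K}|\widetilde M|\,\norm{Z}_{H^s}+\norm{\widetilde M(B)}_{H^s}\norm{Z}_{L^\infty}\bigr).
\]
The first coefficient is a constant depending only on \(\mathcal K\) and \(M\).

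\emph{Variable part, composition step.} We then need \(\norm{\widetilde M(B)}_{H^s}\le C_{\mathcal K,s}(1+\norm{B}_{H^s})\). This is the compact-range composition (Moser) estimate, and it is the main technical step.

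First extend \(M\) to a smooth compactly supported function on \(\R^N\) that agrees with \(M\) near \(\mathcal K\). This is possible since \(\mathcal K\Subset U\). The extension changes nothing because \(B\) takes values in \(\mathcal K\).

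Then, for integer \(s\), use Faà di Bruno together with Gagliardo--Nirenberg interpolation \(\norm{\nabla^j B}_{L^{2s/j}}\lesssim\norm{B}_{L^\infty}^{1-j/s}\norm{B}_{H^s}^{j/s}\), with \(\norm{B}_{L^\infty}\) bounded by \(\mathcal K\). For fractional \(s\), use the paralinearization \(F(B)=T_{F'(B)}B+\text{remainder}\) (Meyer), or interpolate between integers.

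In the zero-mode/low-frequency block, \(\norm{\widetilde M(B)}_{L^2}\) is bounded by \(C|\T^3|^{1/2}\); this constant is absorbed into the \(1\).

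\emph{Absorbing the constant.} The bound above leaves a lone \(\norm{Z}_{L^\infty}\) not multiplied by \(\norm{B}_{H^s}\). Since \(s>3/2\), Sobolev embedding gives \(\norm{Z}_{L^\infty}\le C\norm{Z}_{H^s}\), so this term is absorbed into \(\norm{Z}_{H^s}\).

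\emph{Choice of \(X\).} Finally, \(\norm{Z}_{L^\infty}\le\norm{Z}_{B^0_{\infty,1}}\) by summing the Littlewood--Paley blocks. Hence the \(L^\infty\) version implies the \(B^0_{\infty,1}\) version, and both choices of \(X\) are covered.

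\emph{Main obstacle.} The hardest step is the tame composition bound for non-integer \(s\). We would settle it by citing the standard Moser estimate (e.g.\ \cite{BCD}) for smooth \(F\) with \(F(0)=0\) applied to \(B-b_0\) after the extension above, giving \(\norm{F(B)}_{H^s}\le C(\norm{B}_{L^\infty})\norm{B-b_0}_{H^s}\). The constant \(b_0\) contributes only \(C(1+\norm{B}_{H^s})\) on the torus. If \(\mathcal K\) contains \(0\) and \(M\) vanishes there, the same argument yields the sharper homogeneous form.
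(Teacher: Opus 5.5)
Your proposal is correct and follows essentially the same route as the paper: split off the constant value \(M(b_0)\), apply the tame product estimate to \((M(B)-M(b_0))Z\), and bound \(\norm{M(B)-M(b_0)}_{H^s}\) by compact-range Moser composition in the shifted variable \(B-b_0\). Your bookkeeping is slightly more careful than the paper's, which writes \(\norm{B-B_*}_{H^s}=\norm{B}_{H^s}\), and this is not an identity on \(\T^3\) unless \(B_*=0\); your \(1+\norm{B}_{H^s}\) bound, the absorption \(\norm{Z}_{L^\infty}\lesssim\norm{Z}_{H^s}\), and the reduction of the \(B^0_{\infty,1}\) case to the \(L^\infty\) case via \(\norm{Z}_{L^\infty}\le\norm{Z}_{B^0_{\infty,1}}\) are what make the stated inequality hold as written.
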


\begin{proof}
Fix \(B_*\in\mathcal K\) and write
\[
  M(B)Z=M(B_*)Z+\bigl(M(B)-M(B_*)\bigr)Z .
\]
The constant-coefficient term is bounded by \(\norm{Z}_{H^s}\).  For the
remaining term use the tame product estimate
\[
  \norm{fg}_{H^s}
  \le C_s\left(
  \norm{f}_{L^\infty}\norm{g}_{H^s}
  +\norm{g}_{X}\norm{f}_{H^s}
  \right)
\]
with \(f=M(B)-M(B_*)\), \(g=Z\), and
\(X=L^\infty\) or \(B^0_{\infty,1}\).  The \(L^\infty\) case is the standard
Sobolev tame product estimate, while the endpoint Besov case follows from the
same paraproduct proof with the high-frequency coefficient measured in
\(B^0_{\infty,1}\).
For the composition bound, set \(V=B-B_*\) and
\(\widetilde M(V)=M(B_*+V)-M(B_*)\).  On the compact translated range
\(\mathcal K-B_*\), the map \(\widetilde M\) is smooth and
\(\widetilde M(0)=0\).  The compact-range composition estimate therefore gives
\[
  \norm{f}_{L^\infty}\le C_{\mathcal K,M},
  \qquad
  \norm{f}_{H^s}\le C_{\mathcal K,s,M}\norm{B-B_*}_{H^s}
  = C_{\mathcal K,s,M}\norm{B}_{H^s},
\]
which completes the proof.
\end{proof}

\begin{lemma}[Compact-range composition]
\label{lem:appendix-composition}
Let \(s>5/2\).  Let \(U\subset\R^N\) be open and let
\(\mathcal F:U\to\R^M\) be \(C^{[s]+2}\).  If \(V:\T^3\to U\) has range in a
compact set \(K\Subset U\), then
\[
  \norm{\mathcal F(V)}_{H^s}
  \le
  C_{K,s,\mathcal F}\bigl(1+\norm{V}_{H^s}\bigr).
\]
If additionally \(\mathcal F(0)=0\) and \(0\in K\), then
\[
  \norm{\mathcal F(V)}_{H^s}
  \le
  C_{K,s,\mathcal F}\norm{V}_{H^s}.
\]
\end{lemma}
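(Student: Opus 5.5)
We prove the compact-range composition estimate, Lemma~\ref{lem:appendix-composition}, by the classical Moser argument on the torus. The \(L^2\) part of the norm is immediate. Since \(V\) takes values in the compact set \(K\) and \(\mathcal F\) is continuous on \(U\), we have \(\norm{\mathcal F(V)}_{L^2}\le |\T^3|^{1/2}\sup_K|\mathcal F|\). In the vanishing case \(\mathcal F(0)=0\), we replace this by a mean-value bound. On a bounded convex neighbourhood of \(K\), or via a smooth extension of \(\mathcal F\) to \(\R^N\) that agrees with it near \(K\), we get \(|\mathcal F(V)|\le C|V|\), hence \(\norm{\mathcal F(V)}_{L^2}\le C\norm{V}_{L^2}\). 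To avoid convexity issues we first replace \(\mathcal F\) by \(\chi\mathcal F\), where \(\chi\) is a cutoff equal to \(1\) on a neighbourhood of \(K\) and supported in \(U\). This does not change \(\mathcal F(V)\), and it lets us regard \(\mathcal F\) as a \(C^{[s]+2}\) map on all of \(\R^N\) with bounded derivatives.

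For the homogeneous part, write \(s=m+\sigma\) with \(m=[s]\ge2\) and \(\sigma\in[0,1)\). For the integer part, we differentiate \(\partial^\beta\mathcal F(V)\) with \(|\beta|\le m\) using the Faà di Bruno formula. Each term has the form \(D^\ell\mathcal F(V)\,\partial^{\beta_1}V\cdots\partial^{\beta_\ell}V\) with \(\sum|\beta_i|=|\beta|\). The coefficient \(D^\ell\mathcal F(V)\) is bounded by \(\sup|D^\ell\mathcal F|\) over the compact range. The product of derivatives is bounded by Gagliardo--Nirenberg interpolation:
\[
  \norm{\partial^{\beta_i}V}_{L^{2|\beta|/|\beta_i|}}
  \le C\norm{V}_{L^\infty}^{1-|\beta_i|/|\beta|}\norm{V}_{\dot H^{|\beta|}}^{|\beta_i|/|\beta|}.
\]
Hölder's inequality then gives \(C(1+\norm{V}_{L^\infty})^{\ell-1}\norm{V}_{H^{|\beta|}}\). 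Since \(\norm{V}_{L^\infty}\le\sup_K|x|\), this yields the linear bound \(C_K\norm{V}_{H^m}\).

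The fractional part \(\sigma>0\) is the main technical step. We plan to apply the fractional Leibniz estimate (Kato--Ponce) to the Faà di Bruno products, or, more cleanly, the Littlewood--Paley paralinearization \(\mathcal F(V)=T_{\mathcal F'(V)}V+\text{remainder}\) as in \cite{BCD}. Applied at order \(s\), this gives \(\norm{\mathcal F(V)-\mathcal F(0)}_{\dot H^s}\le C(\norm{V}_{L^\infty})\norm{V}_{H^s}\). The constant depends on finitely many derivatives of \(\mathcal F\) on the compact range, and the \(C^{[s]+2}\) regularity suffices for this. Here we use that \(s>5/2\) guarantees \(V\in L^\infty\), although the range hypothesis already provides the needed \(L^\infty\) control directly.

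Combining the \(L^2\) and homogeneous bounds gives \(\norm{\mathcal F(V)}_{H^s}\le C(1+\norm{V}_{H^s})\) in general. When \(\mathcal F(0)=0\), we get \(\le C\norm{V}_{H^s}\): the constant term is absent because the \(L^2\) bound is linear in \(V\), and every homogeneous term carries at least one factor of \(V\). The only delicate point is tracking that the constants depend only on \(K\), \(s\), and \(\mathcal F\).
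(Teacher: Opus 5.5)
Your proposal is correct and follows essentially the same route as the paper: replace \(\mathcal F\) by \(\chi\mathcal F\) with \(\chi\in C_c^\infty(U)\) equal to \(1\) near \(K\), so that you have a globally defined \(C^{[s]+2}\) map that agrees with \(\mathcal F\) on the range of \(V\). You then invoke the standard paralinearized Moser composition estimate, whose constants depend only on finitely many derivatives on a compact set. Applying the cutoff up front in both cases, as you do, is slightly cleaner than the paper's phrasing, because the Littlewood--Paley truncations \(S_jV\) used in the paralinearization need not take values in \(K\).
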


\begin{proof}
This is the usual Moser composition estimate.  Since \(s>3/2\), \(H^s(\T^3)\)
is an algebra and embeds into \(L^\infty\).  The derivatives in the fractional
chain rule are evaluated only on the compact set \(K\), so all constants are
finite and depend only on \(K,s,\mathcal F\).  For the vanishing case, choose
a cutoff \(\chi\in C_c^\infty(U)\) equal to \(1\) on a neighborhood of \(K\).
The product \(\chi\mathcal F\), extended by zero outside \(U\), is a globally
smooth map on \(\R^N\), agrees with \(\mathcal F\) on the range of \(V\), and
still vanishes at the origin.  The standard paralinearized Moser estimate
therefore gives
\[
  \norm{\mathcal F(V)}_{H^s}
  \le
  C_{K,s,\mathcal F}\norm{V}_{H^s}.
\]
No star-shaped assumption on \(K\) is required.
\end{proof}

\begin{lemma}[Uniform restart time]
\label{lem:appendix-restart}
Let \(s>5/2\).  Consider initial data satisfying
\[
  \norm{u_0}_{H^s}+\norm{A_0-\Id}_{H^s}\le M,
  \qquad
  e^{-K}\Id\le A_0\le e^K\Id .
\]
Then the local strong existence time for \eqref{eq:momentum}--\eqref{eq:conf}
has a positive lower bound depending only on \(M,K,\nu,\alpha,\lambda,s\).
\end{lemma}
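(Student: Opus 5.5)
The plan is a standard contraction-mapping (or energy-method with compactness) construction of local solutions, carried out in the logarithmic variable \(B=\Log A_0\) so that the cone constraint is built in. The existence time should depend only on \(M\), \(K\) and the fixed parameters. First, from \(\norm{A_0-\Id}_{H^s}\le M\) and \(e^{-K}\Id\le A_0\le e^K\Id\), the reverse bound in \eqref{eq:matrix-moser} gives \(\norm{B_0}_{H^s}\le C_{K,s}M\) and \(\norm{B_0}_{L^\infty}\le K\).

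Second, I would construct approximate solutions. One option is Friedrichs mollification \(J_\epsilon\) of the system
\[
\partial_t u+u\cdot\nabla u-\nu\Delta u+\nabla p=\alpha\divv(e^B-\Id),\qquad \partial_tB+u\cdot\nabla B=\mathfrak L(B,u).
\]
The other is the iteration \(u^{n+1},B^{n+1}\), in which \(B^{n+1}\) is transported by \(u^n\) and \(u^{n+1}\) solves a linear Stokes problem with forcing built from \(B^n\). Equivalently, one can solve the \(A\)-equation directly as a linear transport equation along the \(C^1\) flow of \(u^n\). Symmetry and positivity then follow from Lemma~\ref{lem:symmetry} and the comparison in Proposition~\ref{prop:cone}.

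Third, I would prove uniform a priori bounds. The key observation is that Proposition~\ref{prop:cone} gives \(\norm{\Log A(t)}_{L^\infty}\le 2K+1\) on any interval where \(\int_0^t\norm{\nabla u}_{L^\infty}\le c_0\) and \(t\le\lambda\). With \(K'=2K+1\) fixed, Propositions~\ref{prop:log-Hs} and~\ref{prop:u-Hs} apply on the envelope \(\norm{B}_{L^\infty}\le K'\). Using \(\norm{\omega}_{\Besov}\le C\norm{u}_{H^s}\) for \(s>5/2\), they give
\[
Y'\le C_{K',s}(1+Y)^{3/2},\qquad Y=\norm{u}_{H^s}^2+\norm{B}_{H^s}^2 .
\]
Hence \(Y(t)\le 4(1+Y(0))\) for \(t\le\tau_1(M,K)\). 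Then \(\int_0^t\norm{\nabla u}_{L^\infty}\le Ct(1+Y(0))^{1/2}\le c_0\) for \(t\le\tau_2(M,K)\). A bootstrap (continuity) argument closes both conditions on \([0,\tau]\) with \(\tau=\min(\tau_1,\tau_2,\lambda)\). This estimate is uniform in the regularization and in the iteration index, since mollifiers commute with \(\Lambda^s\) and preserve the transport cancellation.

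Fourth, for passage to the limit and uniqueness, I would prove a contraction or difference estimate in the low norm \(L^2\times L^2\) for \((u,B)\), or for \((u,A-\Id)\). This uses the \(H^s\) bounds, the embedding \(H^{s-1}\hookrightarrow L^\infty\), and Lipschitz continuity of \(\mathfrak L\) and \(e^B\) on the compact envelope. Note that one derivative is lost in the transport term \(\delta u\cdot\nabla B\), but \(\nabla B\in L^\infty\) covers it. Interpolation and weak-star compactness then give a limit in \(L^\infty(0,\tau;H^s)\). Time continuity in \(H^s\) follows by the standard Bona--Smith or renormalization argument, and \(u\in L^2H^{s+1}\) follows from the dissipation. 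Finally, \(A=e^B\) gives \(A-\Id\in H^s\) by \eqref{eq:matrix-moser}.

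The main obstacle is the third step. There the spectral envelope must be propagated for the approximate systems and simultaneously bootstrapped with the \(H^s\) bound so that \(\tau\) depends only on \((M,K)\). This circular dependence, in which the envelope needs the Lipschitz clock and the \(H^s\) bound needs the envelope, must be handled by a continuity argument. Working directly in \(B\) is what makes this possible: the envelope is an \(L^\infty\) bound on the transported variable, so it is not lost under mollification. I would check the maximum-principle-type bound of Proposition~\ref{prop:cone} for the regularized \(B\)-equation first. If it fails, I would regularize only the velocity field, so that the conformation equation remains an exact ODE along the regularized flow.
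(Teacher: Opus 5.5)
Your proposal is correct, but it is organized differently from the paper's proof.

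\textbf{The paper's route.} It runs a Picard iteration directly in the original unknowns \((u,A-\Id)\). The iteration takes place on a ball of \(C([0,\tau];H^s)\cap L^2(0,\tau;H^{s+1})\) times \(C([0,\tau];H^s)\), supplemented by the envelope \(e^{-(K+1)}\Id\le A\le e^{K+1}\Id\). The conformation iterate is solved along the flow of the previous velocity. The envelope is kept by Proposition~\ref{prop:cone} together with \(\int_0^\tau\norm{\nabla u}_{L^\infty}\,dt\le C_s\tau\sup_t\norm{u(t)}_{H^s}\). A small \(\tau\) is then asserted to make the map a self-map and a contraction.

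\textbf{Your route.} You pass to \(B=\Log A\), so positivity is built in. You use Propositions~\ref{prop:log-Hs} and~\ref{prop:u-Hs} as uniform a priori bounds for approximate solutions. You then separate existence (high-norm bounds, compactness, and a Bona--Smith type continuity argument) from convergence and uniqueness (an \(L^2\times L^2\) difference estimate closed with the velocity dissipation).

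\textbf{What each buys.} The paper's route is shorter and stays in the physical variable, where each conformation iterate solves a linear ODE along characteristics. Your route is more careful on one point the paper's sketch passes over. The difference of two transported conformations contains \(\delta u\cdot\nabla A\), which lies only in \(H^{s-1}\) when \(A\in H^s\). So the Picard map is in general not Lipschitz in the \(H^s\)-based norms. The paper's contraction claim is best read as your two-norm scheme: bounded in the high norm, contractive in a low norm.

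\textbf{A simplification.} The circularity you identify as the main obstacle essentially disappears in the \(B\)-variable. Since \(s>3/2\), \(\norm{B}_{L^\infty}\le C_s\norm{B}_{H^s}\), so the \(H^s\) bootstrap alone controls the spectral envelope. The resulting constant depends only on \(M\) and \(K\), which the lemma permits. You then need neither Proposition~\ref{prop:cone} nor any maximum principle for the mollified \(B\)-equation.
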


\begin{proof}
The Picard iteration is performed on a ball in
\[
  u\in C([0,\tau];H^s)\cap L^2(0,\tau;H^{s+1}),
  \qquad
  A-\Id\in C([0,\tau];H^s).
\]
For example, take the ball where
\[
  \sup_{0\le t\le\tau}
  \bigl(\norm{u(t)}_{H^s}+\norm{A(t)-\Id}_{H^s}\bigr)\le 2C_0M+1
\]
and require, in addition, the short-time spectral envelope
\[
  e^{-(K+1)}\Id\le A(t)\le e^{K+1}\Id .
\]
The Stokes estimate gives the velocity smoothing in this ball, while the
conformation equation is solved along the Lipschitz flow generated by the
current velocity iterate.  Since \(s>5/2\), \(H^s(\T^3)\hookrightarrow C^1\)
and \(H^s\) is an algebra; the maps
\[
  (u,A)\mapsto u\cdot\nabla u,\qquad
  (u,A)\mapsto \nabla u\,A+A(\nabla u)^T-\lambda^{-1}(A-\Id)
\]
are locally Lipschitz in the displayed norms on this ball.  The constants in
these Lipschitz estimates depend only on \(M,K,\nu,\alpha,\lambda,s\).

It remains only to keep the cone envelope uniform during the iteration.
For velocities in the ball,
\[
  \int_0^\tau\norm{\nabla u(t)}_{L^\infty}\,dt
  \le C_s\tau\sup_{0\le t\le\tau}\norm{u(t)}_{H^s}.
\]
Choosing \(\tau\) small in terms of the same parameters and applying
Proposition~\ref{prop:cone} keeps the iterates inside
\(e^{-(K+1)}\Id\le A\le e^{K+1}\Id\).  The same choice of \(\tau\), made
smaller if necessary, makes the Picard map a contraction and maps the ball
into itself.  Hence the local existence time has a positive lower bound
depending only on \(M,K,\nu,\alpha,\lambda,s\).
\end{proof}

\section*{Acknowledgements}

\begin{sloppypar}
The author acknowledges financial support from the National Natural Science
Foundation of China (NSFC, Grant No. 12501602); the Education Department of
Hunan Province (Grant No. 24C0055); the Science and Technology Department of
Hunan Province (Grant No. 2025JJ60052); and the Scientific Research Start-up
Fund of Xiangtan University (Grant No. KZ0810769).
\end{sloppypar}


\begin{thebibliography}{99}

\bibitem{BKM}
J.~T. Beale, T.~Kato, and A.~Majda.
\newblock Remarks on the breakdown of smooth solutions for the 3-D Euler
equations.
\newblock \emph{Communications in Mathematical Physics}, 94:61--66, 1984.

\bibitem{BarrettBoyaval}
J.~W. Barrett and S.~Boyaval.
\newblock Existence and approximation of a (regularized) Oldroyd-B model.
\newblock \emph{Mathematical Models and Methods in Applied Sciences},
21:1783--1837, 2011.

\bibitem{BCD}
H.~Bahouri, J.-Y. Chemin, and R.~Danchin.
\newblock \emph{Fourier Analysis and Nonlinear Partial Differential
Equations}.
\newblock Springer, 2011.

\bibitem{Bird}
R.~B. Bird, R.~C. Armstrong, and O.~Hassager.
\newblock \emph{Dynamics of Polymeric Liquids. Vol. 1: Fluid Mechanics}.
\newblock Wiley, second edition, 1987.

\bibitem{CheminMasmoudi}
J.-Y. Chemin and N.~Masmoudi.
\newblock About lifespan of regular solutions of equations related to
viscoelastic fluids.
\newblock \emph{SIAM Journal on Mathematical Analysis}, 33:84--112, 2001.

\bibitem{ChengLuoYangYuan}
X.~Cheng, Z.~Luo, Z.~Yang, and C.~Yuan.
\newblock Global well-posedness and uniform-in-time vanishing damping limit for
the inviscid Oldroyd-B model.
\newblock arXiv:2410.09340, 2024.

\bibitem{ConstantinKliegl}
P.~Constantin and M.~Kliegl.
\newblock Note on global regularity for two-dimensional Oldroyd-B fluids with
diffusive stress.
\newblock \emph{Archive for Rational Mechanics and Analysis}, 206:725--740,
2012.

\bibitem{DeLellisSzekelyhidi}
C.~De Lellis and L.~Szekelyhidi Jr.
\newblock The Euler equations as a differential inclusion.
\newblock \emph{Annals of Mathematics}, 170:1417--1436, 2009.

\bibitem{ElgindiRousset}
T.~M. Elgindi and F.~Rousset.
\newblock Global regularity for some Oldroyd-B type models.
\newblock \emph{Communications on Pure and Applied Mathematics},
68:2005--2021, 2015.

\bibitem{FattalKupferman}
R.~Fattal and R.~Kupferman.
\newblock Constitutive laws for the matrix-logarithm of the conformation tensor.
\newblock \emph{Journal of Non-Newtonian Fluid Mechanics}, 123:281--285, 2004.

\bibitem{GuillopeSaut}
C.~Guillope and J.-C. Saut.
\newblock Existence results for the flow of viscoelastic fluids with a
differential constitutive law.
\newblock \emph{Nonlinear Analysis}, 15:849--869, 1990.

\bibitem{HuangLiuZi}
J.~Huang, Q.~Liu, and R.~Zi.
\newblock Global existence and decay rates of solutions to the Oldroyd-B model
with stress tensor diffusion.
\newblock \emph{Journal of Differential Equations}, 389:38--89, 2024.

\bibitem{KatoPonce}
T.~Kato and G.~Ponce.
\newblock Commutator estimates and the Euler and Navier-Stokes equations.
\newblock \emph{Communications on Pure and Applied Mathematics},
41:891--907, 1988.

\bibitem{LeiMasmoudiZhou}
Z.~Lei, N.~Masmoudi, and Y.~Zhou.
\newblock Remarks on the blowup criteria for Oldroyd models.
\newblock \emph{Journal of Differential Equations}, 248:328--341, 2010.

\bibitem{LiangLiZhai}
T.~Liang, Y.~Li, and X.~Zhai.
\newblock Large global solutions to the Oldroyd-B model with dissipation.
\newblock arXiv:2504.12986, 2025.

\bibitem{LinWuBoardman}
H.~Lin, J.~Wu, and N.~Boardman.
\newblock Stability on a 3D incompressible Oldroyd-B model with mixed partial
dissipation.
\newblock \emph{Science China Mathematics}, 68:1567--1606, 2025.

\bibitem{LionsMasmoudi}
P.-L. Lions and N.~Masmoudi.
\newblock Global solutions for some Oldroyd models of non-Newtonian flows.
\newblock \emph{Chinese Annals of Mathematics. Series B}, 21:131--146, 2000.

\bibitem{Oldroyd}
J.~G. Oldroyd.
\newblock On the formulation of rheological equations of state.
\newblock \emph{Proceedings of the Royal Society of London. Series A},
200:523--541, 1950.

\bibitem{PengUnified}
S.~Peng.
\newblock Geometric blow-up criteria for viscoelastic flows: Oldroyd-B and
FENE-P models.
\newblock Manuscript, 2026.

\bibitem{Renardy}
M.~Renardy.
\newblock \emph{Mathematical Analysis of Viscoelastic Flows}.
\newblock SIAM, 2000.

\bibitem{RenardyThomases}
M.~Renardy and B.~Thomases.
\newblock A mathematician's perspective on the Oldroyd B model: Progress and
future challenges.
\newblock \emph{Journal of Non-Newtonian Fluid Mechanics}, 293:104573, 2021.

\bibitem{Zhu2018}
Y.~Zhu.
\newblock Global small solutions of 3D incompressible Oldroyd-B model without
damping mechanism.
\newblock \emph{Journal of Functional Analysis}, 274:2039--2060, 2018.

\bibitem{Zi2021}
R.~Zi.
\newblock Vanishing viscosity limit of the 3D incompressible Oldroyd-B model.
\newblock \emph{Annales de l'Institut Henri Poincare C, Analyse Non Lineaire},
38:1841--1867, 2021.

\end{thebibliography}
\end{document}